\newcommand{\ggl}{\mathfrak{gl}}                                   
\renewcommand{\gg}{\mathfrak{g}}            				       
\newcommand{\G}{\mathcal{G}}									   
\newcommand{\hh}{\mathfrak{h}}            						   
\newcommand{\Lie}{\mathcal{L}}                                     
\newcommand{\ad}{\textnormal{ad}}           					   
\newcommand{\coker}{\textnormal{coker }}                           
\providecommand{\abs}[1]{\lvert#1\rvert}						   
\newcommand{\rest}[1]{\Big{\vert}_{#1}}							   
\newcommand{\vJoin}{\mathbin{\rotatebox[origin=c]{90}{$\Join$}}}   
\newtheorem{remark}{Remark}										   
\begin{document}

\title{A cohomological proof for the integrability of strict Lie 2-algebras}
\shorttitle{Integrability of Lie 2-algebras}

\volumeyear{2021}
\paperID{rnn999}

\author{Camilo Angulo\affil{1}}
\abbrevauthor{C. Angulo}
\headabbrevauthor{Angulo, C}

\address{%
\affilnum{1} Departamento de Matem\'atica Aplicada, Instituto de Matem\'atica da Universidade Federal Fluminense, Rua Professor Marcos Waldemar de Freitas Reis, s/n, Campus do Gragoat\'a \\
Niter\'oi, Rio de Janeiro 24210-201, Brazil.}

\correspdetails{ca.angulo951@gmail.com}

\received{29 January 2021}


\begin{abstract}
We prove a series of van Est type theorems relating the cohomologies of 
strict Lie 2-groups and strict Lie 2-algebras, and use them to prove the 
integrability of Lie 2-algebras anew.
\end{abstract}

\maketitle

\section{Introduction}\label{sec-Intro}
Associated with any Lie algebra $\gg$, there is a canonical extension 
\begin{eqnarray}\label{adExt}
\xymatrix{
0 \ar[r] & \mathfrak{z}(\gg) \ar@{^{(}->}[r] & \gg \ar[r]^{\ad\quad} & \ad(\gg) \ar[r] & 0
}
\end{eqnarray}
of the linear Lie subalgebra $\ad(\gg)\leq\ggl(\gg)$ by the center 
$\mathfrak{z}(\gg)$. Such extensions are classified by 
$H_{CE}^2(\ad(\gg),\mathfrak{z}(\gg))$, the Chevalley-Eilenberg cohomology 
of $\ad(\gg)$ with values on $\mathfrak{z}(\gg)$. Recall the following 
theorem due to van Est:

\begin{theorem}\cite{vanEst:1953}\label{vanEst}
Let $G$ be a Lie group with Lie algebra $\gg$ and a representation on $V$. 
If $G$ is $k$-connected, then the map that differentiates group cochains 
into Chevalley-Eilenberg cochains induces isomorphisms 
\begin{eqnarray}\label{vanEstIso}
\xymatrix{
\Phi :H_{Gp}^n(G,V) \ar[r] & H_{CE}^n(\gg,V)
}
\end{eqnarray}
for all $n\leq k$ and is injective when $n=k+1$.
\end{theorem}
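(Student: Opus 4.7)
The plan is to follow the classical double-complex proof of van Est's theorem. One constructs a bicomplex $(K^{\bullet,\bullet}, \delta_h, \delta_v)$ whose total complex receives edge maps from both $C^\bullet_{Gp}(G, V)$ and $C^\bullet_{CE}(\gg, V)$, and such that the van Est map $\Phi$ is realized as the cohomological comparison along the two edges. A convenient choice is the complex built from left-invariant $V$-valued forms on $G^{p+1}$ with the diagonal $G$-action, where $\delta_h$ is the simplicial differential coming from the face maps and $\delta_v$ is the de Rham differential restricted to invariant forms; then $K^{\bullet, 0}$ computes smooth group cohomology and $K^{0,\bullet}$ computes Chevalley--Eilenberg cohomology, and $\Phi$ is the natural zigzag between them.

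Next, I would run the two spectral sequences of the total complex. One of them collapses easily on its first column to $H^\bullet_{CE}(\gg, V)$. The other has rows whose inner differential $\delta_v$ computes the $V$-valued de Rham cohomology of $G^{p+1}$; the hypothesis that $G$, and hence each $G^{p+1}$, is $k$-connected forces this cohomology to vanish in degrees $1 \leq q \leq k$. Consequently, this spectral sequence collapses onto its bottom edge in the corresponding strip and identifies the total cohomology with $H^\bullet_{Gp}(G,V)$ through degree $k$. Comparing the two identifications yields the isomorphism $\Phi\colon H^n_{Gp}(G, V) \to H^n_{CE}(\gg, V)$ for $n \leq k$; in degree $n = k+1$, the spectral sequence comparison still produces injectivity, because the first potentially nonzero differential out of the CE edge could obstruct surjectivity but not injectivity.

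The principal obstacle is the acyclicity input: a smooth Poincar\'{e} lemma (or its de Rham--simplicial analogue) asserting that the $V$-valued de Rham cohomology of a $k$-connected smooth manifold vanishes in degrees $1 \leq q \leq k$. For a trivial coefficient $V$ this is classical, via Hurewicz together with the de Rham theorem; for a nontrivial representation one must treat $V$ as a coefficient system that becomes globally trivializable thanks to the connectivity of $G$, and then construct explicit smooth contracting homotopies by integration along paths. The delicate point is ensuring these homotopies are compatible with the simplicial face maps on $G^{p+1}$ and with the left-invariance encoding the $G$-equivariance, so that the vanishing actually occurs uniformly across the column and the strip of collapse is exactly of the claimed width.
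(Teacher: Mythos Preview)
The paper does not prove this theorem; it is stated as a classical result due to van Est and cited from the literature. There is thus no ``paper's own proof'' to compare against. Your proposal is a correct outline of the standard double-complex argument, which is essentially van Est's original approach and also the template that Crainic generalized to groupoids in the reference cited for Theorem~\ref{Crainic-vanEst}.

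One small point of clarification: your explanation of injectivity in degree $k+1$ is slightly muddled. In the spectral sequence that collapses to $H^\bullet_{CE}$ (via the simplicial contracting homotopy on invariant forms), there is nothing further to say; that identification is exact in all degrees. The injectivity statement comes from the \emph{other} spectral sequence, the one whose $E_1$-page is the de Rham cohomology of $G^p$. There $E_2^{k+1,0}=H^{k+1}_{Gp}(G,V)$ survives to $E_\infty$ because every incoming differential $d_r\colon E_r^{k+1-r,\,r-1}\to E_r^{k+1,0}$ has source in the vanishing strip $1\le r-1\le k$, and $E_\infty^{k+1,0}$ then embeds into $H^{k+1}_{tot}=H^{k+1}_{CE}$. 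The obstruction to surjectivity is the possibly nonzero graded piece $E_\infty^{0,k+1}$, not a differential ``out of the CE edge.'' Also, the acyclicity for nontrivial coefficients $V$ is simpler than you suggest: since $G$ is connected, the local system is automatically trivializable as a flat bundle over each $G^p$, so the ordinary de Rham vanishing with constant coefficients suffices.
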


Observe that, being linear, $\ad(\gg)$ is integrable and recall that one 
can always pick a $2$-connected integration, say $G$. Thus, 
Theorem~\ref{vanEst} implies there exists a unique cohomology class 
$[\int\omega_\gg]\in H_{Gp}^2(G,\mathfrak{z}(\gg))$, whose image under 
(\ref{vanEstIso}) is $[\omega_\gg]\in H_{CE}^2(\ad(\gg),\mathfrak{z}(\gg))$, 
the cohomology class that corresponds to the extension (\ref{adExt}). 
Since $H_{Gp}^2(G,V)$ classifies extensions of $G$ by $V$, there is a 
unique extension 
\begin{eqnarray}\label{intExt}
\xymatrix{
1 \ar[r] & \mathfrak{z}(\gg) \ar[r] & \G \ar[r] & G \ar[r] & 1
}
\end{eqnarray}
corresponding to $[\int\omega_\gg]$, and $\G$ is a Lie group integrating 
$\gg$.

The purpose of this article is to adapt this strategy -that we henceforth 
refer to as the \textit{van Est strategy} \cite{vanEst:1955,Crainic:2003}- 
to prove the integrability of strict Lie $2$-algebras. 

A strict Lie $2$-group is a groupoid 
\begin{eqnarray}\label{ALie2Gp}
\xymatrix{
\G\times_H\G \ar[r]^{\quad m} & \G \ar@<0.5ex>[r]^{s} \ar@<-0.5ex>[r]_{t} \ar@(l,d)[]_{\iota} & H \ar[r]^{u} & \G ,
}
\end{eqnarray}
in which the spaces of objects, arrows and composable arrows are Lie 
groups, and all of whose structural morphisms are Lie group homomorphisms. 
Differentiating the whole structure, one gets a (strict) Lie $2$-algebra
\begin{eqnarray}\label{ALie2Alg}
\xymatrix{
\gg_1\times_\hh \gg_1 \ar[r]^{\quad \hat{m}} & \gg_1 \ar@<0.5ex>[r]^{\hat{s}} \ar@<-0.5ex>[r]_{\hat{t}} \ar@(l,d)[]_{\hat{\iota}} & \hh \ar[r]^{\hat{u}} & \gg_1 .
}
\end{eqnarray}

In \cite{Sheng_Zhu1:2012}, it is proven using the path method that all 
finite-dimensional Lie $2$-algebras are the infinitesimal counterpart of a 
Lie $2$-group. In the sequel, we present a cohomological proof of this fact. 
Such approach still works in infinite dimensions and was historically used 
to construct the first example of a non-integrable Lie algebra 
\cite{vanEst_Korthagen:1964}; thus, it bears the potential to improve our 
current understanding of the Lie theory of other categorified objects (see, 
\textit{e.g.}, \cite{Bursztyn_Cabrera_delHoyo:2016,Stefanini:2008,Neeb:2002}).

Let us list the necessary ingredients for the van Est strategy to run:
\begin{itemize}
\item[1)] The canonically associated adjoint extension (\ref{adExt}).
\item[2)] Global and infinitesimal cohomology theories that classify extensions. 
\item[3)] A van Est map and theorem.
\item[4)] That linear Lie algebras be integrable to $2$-connected Lie groups.
\end{itemize}

Lie $2$-algebras have a canonically associated adjoint representation (see 
Example~\ref{2ad} below). In \cite{Angulo1:2020,Angulo2:2020}, complexes 
whose second cohomology classify respectively extensions of Lie $2$-algebras 
and extensions of Lie $2$-groups are introduced. Each of these is the total 
complex of a triple complex of sorts. In order to describe them, let us fix 
notation. 

First, recall that the categories of Lie $2$-algebras and Lie $2$-groups 
are respectively equivalent to the categories of crossed modules of Lie 
algebras and of Lie groups 
\cite{Baez_Crans:2004,Baez_Lauda:2004,Loday:1982}. 

\begin{definition}\label{AlgCrossMod}
A \textit{crossed module of Lie algebras} is a Lie algebra morphism 
$\xymatrix{\gg \ar[r]^\mu & \hh}$ together with a Lie algebra action by 
derivations $\xymatrix{\Lie:\hh \ar[r] & \ggl(\gg)}$ satisfying
\begin{align*}
\mu(\Lie_y x) & =[y,\mu(x)], & \Lie_{\mu(x_0)}x_1 & =[x_0,x_1]
\end{align*} 
for all $y\in\hh$ and $x,x_0,x_1\in\gg$. Following the convention in the 
literature, we refer to these equations respectively as equivariance and 
infinitesimal Peiffer.
\end{definition} 

\begin{definition}\label{GpCrossMod}
A \textit{crossed module of Lie groups} is a Lie group homomorphism 
$\xymatrix{G \ar[r]^i & H}$ together with a right action of $H$ on $G$ by 
Lie group automorphisms satisfying
\begin{align*}
i(g^h) & =h^{-1}i(g)h, & g_1^{i(g_2)} & =g_2^{-1}g_1g_2,
\end{align*} 
for all $h\in H$ and $g,g_1,g_2\in G$, where we write $g^h$ for $h$ acting 
on $g$. Following the convention in the literature, we refer to these 
equations respectively as equivariance and Peiffer. 
\end{definition}

Representations of both Lie $2$-algebras and Lie $2$-groups take values on 
so-called $2$-vector spaces. These are (flat) abelian objects in either 
category, which, in crossed module presentation, correspond simply to 
$2$-term complexes of vector spaces $\xymatrix{W \ar[r]^\phi & V}$. The 
category of linear invertible self-functors of a $2$-vector space and 
homomorphic natural transformations $GL(\phi)$ has got the structure of a 
Lie $2$-group whose Lie $2$-algebra $\ggl(\phi)$ is the category of linear 
functors and linear natural transformations (see Subsection~\ref{sss-LinAndRep} 
for details). Respectively, representations are by definition maps of Lie 
$2$-groups to $GL(\phi)$ and maps of Lie $2$-algebras to $\ggl(\phi)$. 

Lastly, we assume the following unconventional notation for the spaces of 
$p$-composable arrows:
\begin{align}\label{p-comp}
\G_p  & :=\lbrace(\gamma_1,...,\gamma_p)\in\G^p:s(\gamma_k)=t(\gamma_{k+1}),\quad\forall k\rbrace , & \textnormal{ and }\quad
\gg_p & :=\lbrace(\xi_1,...,\xi_p)\in\gg^p:\hat{s}(\xi_k)=\hat{t}(\xi_{k+1}),\quad\forall k\rbrace .
\end{align}
We are ready to define the three-dimensional lattices of vector spaces 
underlying the definition of the complexes of Lie $2$-algebra and Lie 
$2$-group cochains taking values on the $2$-vector space 
$\xymatrix{W \ar[r]^\phi & V}$. For a Lie $2$-algebra (\ref{ALie2Alg})  
with associated crossed module $\xymatrix{\gg \ar[r] & \hh}$, set  
\begin{eqnarray}\label{Alg3dimLat}
C^{p,q}_r(\gg_1,\phi):=\bigwedge^q\gg_p^*\otimes\bigwedge^r\gg^*\otimes W
& \textnormal{ for } r>0, \textnormal{ and } & 
C^{p,q}_0(\gg_1,\phi):=\bigwedge^q\gg_p^*\otimes V,
\end{eqnarray}
where $\gg_0=\hh$.
For a Lie $2$-group (\ref{ALie2Gp}) with associated crossed module 
$\xymatrix{G \ar[r] & H}$, set
\begin{eqnarray}\label{Gp3dimLat}
C^{p,q}_r(\G,\phi):=C(\G^q_p\times G^r,W)
& \textnormal{ for } r>0, \textnormal{ and } &
C^{p,q}_0(\G,\phi):=C(\G^q_p,V),
\end{eqnarray}
where $\G_0 =H$, $\G_1=\G$ and $C(X,A)$ is the vector space of $A$-valued 
smooth functions.

These lattices come together with a three-dimensional \textit{grid} of maps 
that is a complex in each direction; in the Lie $2$-algebra case, the grid 
is built out of Chevalley-Eilenberg complexes, while in the Lie $2$-group 
case, the grid is built out of groupoid cochain complexes (see 
Subsection~\ref{sss-Cxs} for details). We refrain from calling either grid 
a triple complex because not all differentials commute with one another. In 
each case, two of the building differentials commute only up to homotopy 
(or up to isomorphism when $r=0$). In \cite{Angulo1:2020,Angulo2:2020}, it 
is explained how adding the homotopies to the total differential $\nabla$ 
makes up for this defect, ultimately turning 
\begin{eqnarray}\label{Cxs}
C^{n}_\nabla(\gg_1,\phi)=\bigoplus_{p+q+r=n}C^{p,q}_r(\gg_1,\phi) & \textnormal{ and } & 
C^{n}_\nabla(\G,\phi)=\bigoplus_{p+q+r=n}C^{p,q}_r(\G,\phi)
\end{eqnarray}
into actual complexes.

The fundamental property of the complexes (\ref{Cxs}), as it was mentioned, 
is that their second cohomology classify extensions. In fact, the 
equivalence happens at the level of cocycles, \textit{i.e.}, a $2$-cocycle 
in either $C^{2}_{tot}(\gg_1,\phi)$ or $C^{2}_{tot}(\G,\phi)$ univoquely 
defines an extension by the $2$-vector space $\xymatrix{W \ar[r]^\phi & V}$, 
and two such extensions are isomorphic if and only if the cocycles are 
cohomologous. In so, the map that \textit{linearizes} a Lie $2$-group 
extension induces a map 
\begin{eqnarray}\label{2vanEst2}
\xymatrix{
\Phi:C_\nabla^2(\G,\phi) \ar[r] & C_\nabla^2(\gg_1,\phi)
}
\end{eqnarray} 
whenever $\gg_1$ is the Lie $2$-algebra of $\G$. Due to its nature, we are 
bound to call $\Phi$ the van Est map. This map can be proved to be 
assembled from groupoid van Est maps (see Section~\ref{sec-Theos}). Recall 
that the van Est theorem (Theorem~\ref{vanEst}) admits an extension to 
Lie groupoids.

\begin{theorem}\cite{Crainic:2003}\label{Crainic-vanEst}
Let $\xymatrix{\G \ar@<0.5ex>[r] \ar@<-0.5ex>[r] & M}$ be a Lie groupoid 
with Lie algebroid $A$ and a representation on the vector bundle $E$. If 
the source fibres of $\G$ are $k$-connected, the map that differentiates 
groupoid cochains into algebroid cochains induces isomorphisms
\begin{eqnarray}\label{Crainic-vanEstIso}
\xymatrix{
\Phi:H_{Gpd}^n(G,E) \ar[r] & H_{CE}^n(A,E)
}
\end{eqnarray}
for all $n\leq k$ and it is injective for $n=k+1$.
\end{theorem}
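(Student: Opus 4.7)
The plan is to realize both cohomologies as edges of a single double complex, and then extract the statement from the comparison of the two spectral sequences, with the connectivity hypothesis controlling one of them.

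First, I would set up a double complex $C^{p,q}(\G,E)$ on the nerve $\G_\bullet$, where $C^{p,q}$ consists of $E$-valued differential $q$-forms on the space of $p$-composable arrows $\G_p$ which are vertical with respect to the source fibration $\G_p\to M$. The horizontal map $\partial\colon C^{p,q}\to C^{p+1,q}$ is the simplicial coboundary built from the face maps of the nerve (pulling back forms and pairing with the coefficients via the representation); the vertical map $d_f\colon C^{p,q}\to C^{p,q+1}$ is the fibrewise de Rham differential along source fibres. A direct check shows they commute up to the usual sign, so $(C^{\bullet,\bullet},\partial,d_f)$ is a genuine double complex.

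Second, I would identify the two relevant edges. The bottom row $C^{\bullet,0}$ consists of $E$-valued smooth functions on $\G_p$ with the standard groupoid coboundary, hence computes $H^{\bullet}_{Gpd}(\G,E)$. On the other hand, restricting source-vertical forms to the unit section gives an isomorphism between forms concentrated at units and $\Gamma(\wedge^{\bullet}A^{*}\otimes E)$, because the kernel of $ds$ at a unit is by definition $A$; one then verifies that $d_f$ descends to the Chevalley--Eilenberg differential under this restriction.

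Third, I would run the spectral sequence filtering by $p$ first. Its $E_1$ page is $H^q_{f}(\G_p;E)$, fibrewise de Rham cohomology along source fibres. Since the source fibres of $\G_p$ are (up to a proper homotopy) iterated fibre products of source fibres of $\G$, the $k$-connectivity hypothesis together with the fibrewise Poincar\'e lemma forces $E_1^{p,q}=0$ for $0<q\le k$. Consequently the inclusion of the bottom row $C^{\bullet,0}\hookrightarrow\mathrm{Tot}\,C^{\bullet,\bullet}$ induces isomorphisms on cohomology in degrees $\le k$ and an injection in degree $k+1$.

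Fourth, I would run the other spectral sequence, filtering by $q$ first. The rows compute simplicial cohomology of the nerve with coefficients in fibrewise forms; using the degeneracies of $\G_\bullet$ together with a fibrewise homotopy operator (integrating along a chosen contraction of each $\G_p$ onto the units) one shows that each row's cohomology is concentrated at $p=0$ and equals $\Gamma(\wedge^{q}A^{*}\otimes E)$. The induced differential on this $E_2$ page is the Chevalley--Eilenberg one by the identification of the second paragraph, so the total cohomology is $H^{\bullet}_{CE}(A,E)$.

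Comparing the two outputs, and tracking the van Est map as the composite of the two edge maps (differentiation along the unit section), yields the theorem. The main obstacle I expect is the fibrewise Poincar\'e lemma with coefficients: one needs the contraction of source fibres to be smooth in the base-point and compatible with the representation $E$, which is exactly where the $k$-connectivity hypothesis has to be upgraded to a parametrised form. The secondary subtlety is checking sign conventions so that $\partial$ and $d_f$ really anticommute and the identifications with groupoid and CE differentials are strict rather than only chain-homotopic.
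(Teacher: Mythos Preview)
The paper does not give its own proof of this statement: Theorem~\ref{Crainic-vanEst} is quoted from \cite{Crainic:2003} and used as a black box. The only thing the paper does with it is rephrase it via Proposition~\ref{ConeCoh} as the vanishing statement Theorem~\ref{Crainic-vanEstRephrased}, and then invoke that repeatedly. So there is no ``paper's own proof'' to compare against; your proposal should be compared with Crainic's original argument.

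On that count your sketch is essentially Crainic's strategy: a double complex of source-foliated $E$-valued forms on the nerve, with the two spectral sequences identifying the total cohomology with $H^\bullet_{Gpd}(\G,E)$ on one side and $H^\bullet_{CE}(A,E)$ on the other. A couple of points to tighten. In your fourth step you conflate two different mechanisms: the acyclicity of the rows (the $\partial$-direction for fixed $q$) comes from a purely simplicial homotopy built out of degeneracies/units of the nerve, not from any contraction of $\G_p$ onto the unit section; no topological input is needed there. The topology enters only in your third step, and there ``Poincar\'e lemma'' is the wrong name for what you need: $k$-connectedness does not give you a contraction, so you are not applying a Poincar\'e lemma but rather the statement that foliated de~Rham cohomology along the source fibres vanishes in the range $0<q\le k$ when those fibres are $k$-connected. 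That is exactly the technical core of Crainic's paper, and it requires a parametrised Hurewicz/de~Rham argument rather than a fibrewise homotopy. Your flagged ``main obstacle'' is therefore real, and you should not expect to resolve it with a contraction; you will need to quote or reproduce Crainic's vanishing result for foliated cohomology directly.
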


We refer to Theorem~\ref{Crainic-vanEst} as the Crainic-van Est theorem in 
the sequel. The Crainic-van Est theorem can be rephrased as a vanishing 
result for the cohomology of the mapping cone of $\Phi$ (see 
Proposition~\ref{ConeCoh}).

\begin{theorem}\label{Crainic-vanEstRephrased}
If $\G$ is source $k$-connected, then
\begin{eqnarray*}
H^n(\Phi)=(0),\quad\textnormal{for all } n\leq k,
\end{eqnarray*}
where $H^\bullet(\Phi)$ is the cohomology of the mapping cone of $\Phi$.
\end{theorem}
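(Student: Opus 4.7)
The plan is to combine the standard long exact sequence in cohomology of a mapping cone with the Crainic--van Est theorem (Theorem~\ref{Crainic-vanEst}). Let $C(\Phi)$ denote the cone of the chain map $\Phi: C^\bullet_{Gpd}(\G,E) \to C^\bullet_{CE}(A,E)$, graded by $C(\Phi)^n = C^{n+1}_{Gpd}(\G,E) \oplus C^n_{CE}(A,E)$ with the usual differential $(c,\omega) \mapsto (-\delta c,\, \Phi(c) + d_{CE}\omega)$. Then there is a tautological short exact sequence of complexes
\begin{equation*}
0 \to C^\bullet_{CE}(A,E) \to C(\Phi)^\bullet \to C^\bullet_{Gpd}(\G,E)[1] \to 0,
\end{equation*}
from which one extracts the long exact sequence
\begin{equation*}
\cdots \to H^n_{Gpd}(\G,E) \xrightarrow{\Phi^*} H^n_{CE}(A,E) \to H^n(\Phi) \to H^{n+1}_{Gpd}(\G,E) \xrightarrow{\Phi^*} H^{n+1}_{CE}(A,E) \to \cdots,
\end{equation*}
in which the connecting homomorphism is (up to sign) $\Phi^*$ itself.

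With this in hand, the hypothesis that $\G$ has $k$-connected source fibres, together with Theorem~\ref{Crainic-vanEst}, supplies exactly what is needed: $\Phi^*$ is an isomorphism in every degree $m \leq k$ and injective in degree $k+1$. Fixing any $n \leq k$, the surjectivity of $\Phi^*: H^n_{Gpd} \to H^n_{CE}$ forces the map $H^n_{CE} \to H^n(\Phi)$ to vanish, while the injectivity of $\Phi^*: H^{n+1}_{Gpd} \to H^{n+1}_{CE}$ forces the connecting map $H^n(\Phi) \to H^{n+1}_{Gpd}$ to vanish. By exactness, $H^n(\Phi) = (0)$, which is the desired conclusion.

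The only real technical point is organizational rather than conceptual: one must pin down once and for all the sign and shift convention used to define $C(\Phi)$ (which should already be fixed by Proposition~\ref{ConeCoh}) and verify the identification of the connecting homomorphism with $\Phi^*$. Beyond this bookkeeping, no additional ingredient is required; the vanishing statement is a direct repackaging of the ``isomorphism in low degrees plus injectivity at the edge'' conclusion of the classical van Est theorem for Lie groupoids.
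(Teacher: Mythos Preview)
Your argument is correct and is essentially the same as the paper's: the paper states Theorem~\ref{Crainic-vanEstRephrased} as an immediate rephrasing of Theorem~\ref{Crainic-vanEst} via Proposition~\ref{ConeCoh}, whose proof is exactly the long exact sequence of the mapping cone with connecting map identified to $\Phi^*$. You have simply unpacked that reference inline.
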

 
One can combine the fact that $\Phi$ is assembled from groupoid van Est 
maps and Theorem~\ref{Crainic-vanEstRephrased} to prove a van Est type 
theorem. To see how, let us momentarily take $W=(0)$. In this case, the 
three-dimensional grids (\ref{Alg3dimLat}) and (\ref{Gp3dimLat}) collapse 
to honest double complexes 

\begin{eqnarray*}
\xymatrix{
\vdots                              & \vdots                                 & \vdots                             &       \\ 
C(H^2,V) \ar[r]^{\partial}\ar[u]  & C(\G^2,V) \ar[r]^{\partial}\ar[u] & C(\G_2^2)\ar[r]\ar[u]               & \dots \\
C(H,V) \ar[r]^{\partial}\ar[u]^\delta & C(\G,V) \ar[r]^{\partial}\ar[u]^\delta    & C(\G_2) \ar[r]\ar[u]^\delta & \dots \\
V \ar[r]^{\partial}\ar[u]^\delta & V \ar[r]^{\partial}\ar[u]^\delta    & V \ar[r]\ar[u]^\delta     & \dots 
} & \xymatrix{ & \\
 			   & \\
			   & \\
   \ar[r]^\Phi & } & \xymatrix{
\vdots                              & \vdots                                 & \vdots                             &       \\ 
\bigwedge^2\hh^*\otimes V \ar[r]^{\partial}\ar[u]  & \bigwedge^2\gg_1^*\otimes V \ar[r]^{\partial}\ar[u] & \bigwedge^2\gg_2^*\otimes V\ar[r]\ar[u]               & \dots \\
\hh^*\otimes V \ar[r]^{\partial}\ar[u]^\delta & \gg_1^*\otimes V \ar[r]^{\partial}\ar[u]^\delta    & \gg_2^*\otimes V \ar[r]\ar[u]^\delta & \dots \\
V \ar[r]^{\partial}\ar[u]^\delta & V \ar[r]^{\partial}\ar[u]^\delta    & V \ar[r]\ar[u]^\delta     & \dots 
}
\end{eqnarray*} 

where $\Phi$ is defined columnwise by 
$\xymatrix{\Phi_p:C_{Gp}^\bullet(\G_p,V) \ar[r] & C_{CE}^\bullet(\gg_p,V)}$, 
the usual van Est map for $\G_p$. Now, $\Phi$ is a map of double complexes 
if and only if 
\begin{align*}
\xymatrix{
C(\Phi_0) \ar[r] & C(\Phi_1) \ar[r] & C(\Phi_2) \ar[r] & \dots
}
\end{align*}
is a double complex, and the first page of the spectral sequence of its 
filtration by columns is
\begin{align}\label{1stPage}
\xymatrix{
& \vdots          & \vdots           & \vdots          &       \\ 
& H^2(\Phi_0) \ar[r] & H^2(\Phi_1) \ar[r] & H^2(\Phi_2) \ar[r] & \dots \\
E^{p,q}_1:& H^1(\Phi_0) \ar[r] & H^1(\Phi_1) \ar[r] & H^1(\Phi_2) \ar[r] & \dots \\
& H^0(\Phi_0) \ar[r] & H^0(\Phi_1) \ar[r] & H^0(\Phi_2) \ar[r] & \dots \\ 
} 
\end{align}
Therefore, if we suppose that $H$ is $2$-connected and $\G$ is 
$1$-connected, $\Phi$ induces isomorphisms between the total cohomologies 
in all degrees less than or equal to $2$. 

The general case is not much more complicated. Indeed, when $W\neq(0)$, 
the restriction 
\begin{eqnarray}\label{Phi_p}
\xymatrix{
\Phi_p:C^{p,\bullet}_\bullet(\G,\phi) \ar[r] & C^{p,\bullet}_\bullet(\gg_1,\phi)
}
\end{eqnarray}
is a map of honest double complexes for which one can apply the above 
reasoning. The cohomology of $\Phi$ coincides with the total cohomology of 
a double complex of sorts each of whose columns is the total complex of 
$\Phi_p$ (see~(\ref{Cllpsed}) below). In spite of not being an actual double 
complex, this object can be filtrated by columns and the first 
page of its spectral sequence essentially coincides with (\ref{1stPage}); 
ultimately, allowing us to prove:

\begin{theorem}\label{2vanEst}
Let $\G$ be a Lie $2$-group with associated crossed module 
$\xymatrix{G \ar[r] & H}$, Lie $2$-algebra $\gg_1$ and a 
representation on the $2$-vector space $\xymatrix{W \ar[r]^\phi & V}$. 
If $H$ and $G$ are both $2$-connected, the map that extends the 
linearization of extensions induces isomorphisms
\begin{eqnarray}\label{2vanEstIso}
\xymatrix{
\Phi:H_{\nabla}^n(\G,\phi) \ar[r] & H_{\nabla}^n(\gg_1,\phi)
}
\end{eqnarray}
for all $n\leq 2$ and it is injective for $n=3$.
\end{theorem}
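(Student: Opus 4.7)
The plan is to recast Theorem~\ref{2vanEst} as a vanishing statement for the cohomology of the mapping cone of $\Phi$ and to run a double spectral sequence argument, in complete analogy with the $W=(0)$ outline leading to (\ref{1stPage}). By the analog of Proposition~\ref{ConeCoh}, proving isomorphisms in degrees $n\leq 2$ and injectivity in degree $3$ is equivalent to proving $H^n(\Phi)=0$ for all $n\leq 2$, where $H^\bullet(\Phi)$ denotes the cohomology of the mapping cone of $\Phi$.

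The outer step is to filter the ``double complex of sorts'' controlling $H^\bullet(\Phi)$ by the groupoid-direction degree $p$. Since the $p$-th column is the total complex of $\Phi_p$ from (\ref{Phi_p}), the associated spectral sequence has $E_1$-page $E_1^{p,q}=H^q(\Phi_p)$, exactly as in (\ref{1stPage}). To conclude $H^n(\Phi)=0$ in degrees $n\leq 2$ it then suffices to establish $H^q(\Phi_p)=0$ for every $p\geq 0$ and every $q\leq 2$.

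The inner step treats each $\Phi_p$, which is a morphism of honest double complexes indexed by $(q,r)$. The key point is that $\Phi_p$ factors (up to chain homotopy) as a composition $(\mathrm{id}\otimes\Phi^G)\circ\Phi^{\G_p}$, where $\Phi^{\G_p}$ is the usual van Est map of the Lie group $\G_p$ with coefficients in $C(G^r,W)$, acting in the $q$-direction, and $\Phi^G$ is the van Est map of $G$ with coefficients in $W$, acting in the $r$-direction. The hypotheses on connectivity ensure that $\G_p\cong G^p\times H$ is $2$-connected as a manifold, and that $G$ is $2$-connected. Theorem~\ref{Crainic-vanEstRephrased} then applies to both $\Phi^{\G_p}$ and $\Phi^G$, giving cone cohomology vanishing in degrees $\leq 2$. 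A standard long exact sequence (or a further spectral sequence filtering the cone of $\Phi_p$ by $r$) combines the two vanishings into $H^q(\Phi_p)=0$ for $q\leq 2$, which feeds back into the outer spectral sequence to finish the proof.

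The main obstacle is that, as the excerpt emphasizes, the three-dimensional lattice supporting $C_\nabla^\bullet$ is not a genuine triple complex: certain building differentials only commute up to homotopy (and only up to isomorphism when $r=0$). Consequently, neither the outer filtration nor the factorization of $\Phi_p$ into two clean van Est maps is entirely formal, and one has to track carefully how the homotopies encoded in $\nabla$ interact with van Est differentiation, verifying that the filtration by $p$ still yields a convergent spectral sequence with $E_1$-page of the expected form $H^q(\Phi_p)$. Once these compatibilities are checked, drawing on the technical machinery of \cite{Angulo1:2020,Angulo2:2020}, the nested spectral sequence argument proceeds exactly as sketched.
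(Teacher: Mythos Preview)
Your overall architecture matches the paper's: reduce to vanishing of the cone, filter by $p$ to get $E_1^{p,q}=H^q_{tot}(\Phi_p)$, and then factor each $\Phi_p$ through an intermediate double complex as a van Est map for $\G_p$ in the $q$-direction followed by a van Est map for $G$ in the $r$-direction. In the paper this factorization is \emph{exact}, not merely up to chain homotopy: one constructs an explicit ``vertical LA-double complex'' $C_{LA}(\gg_p\ltimes G,\phi)$ (Proposition~\ref{(q,r)-VerLAdoubleCx}) and checks $\Phi_p=\Phi_{row}\circ\Phi_V$ on the nose.

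There is, however, one genuine gap in your inner step. You assert that Theorem~\ref{Crainic-vanEstRephrased} applies directly to $\Phi^G$, but it does not: the $r=0$ term in the $r$-direction is $\bigwedge^q\gg_p^*\otimes V$ rather than $\bigwedge^q\gg_p^*\otimes W$, and the first differential $\delta'$ (Eq.~(\ref{Gp1st r}), resp.\ (\ref{Alg1st r})) is not the groupoid (resp.\ Chevalley--Eilenberg) differential for a representation of $G$ (resp.\ $\gg$) on that space. So the $q$th row of the intermediate complex is only $\phi_p^q$-\emph{related} (Definition~\ref{phiRel}) to a standard complex, and Crainic--van Est governs only degrees $r\geq 2$. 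The paper supplies the missing degrees by hand: Lemma~\ref{tweak0} shows $\ker\partial'=\ker\delta'$ using the explicit exponential formula~(\ref{TheExpOfGL(phi)}) for $GL(\phi)_1$ and connectedness of $G$, and Lemma~\ref{tweak1} is a purely homological-algebraic patch transferring the degree-$1$ isomorphism across $\phi$-related complexes. Without these two lemmas your claim $H^q(\Phi_p)=0$ for $q\leq 2$ is unjustified precisely in the low degrees that matter for the application.
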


We can now run van Est strategy as the final ingredient, that linear Lie 
$2$-algebras are integrable is proved in \cite{Sheng_Zhu2:2012}, and one 
can choose such an integration $\xymatrix{G \ar[r] & H}$ with both $H$ and 
$G$ $1$-connected. Since $1$-connected Lie groups are automatically 
$2$-connected, we conclude that there is a unique Lie $2$-group extension 
integrating the canonical adjoint extension of a Lie $2$-algebra, thus 
implying its integrability.

The body of the article is dedicated to formalize how to apply the van Est 
strategy to the case of strict Lie $2$-algebras. In Section~\ref{sec-Pre}, 
we convene notation and state some basic facts. In Section~\ref{sec-TheMap}, 
we define the van Est map and prove it is a map of complexes. In 
Section~\ref{sec-Theos}, we prove a van Est type theorem realizing the 
van Est map as a composition of groupoid van Est maps.

\section{Preliminaries}\label{sec-Pre}
In this section, we settle notation by reviewing the necessary notions 
to formally define the complexes of Lie $2$-algebra and Lie $2$-group 
cochains with $2$-vector space coefficients, and recall the elements of 
homological algebra of which we make use below. 

\subsection{Lie 2-algebras, Lie 2-groups and their cohomology}\label{subsec-2Coh}
\subsubsection{The equivalence with the categories of crossed modules}\label{sss-Equiv}
In the sequel, we make no distinction between a Lie $2$-group or 
a Lie $2$-algebra and their corresponding crossed modules. For future 
reference, we make explicit the equivalence at the level of objects.  

Let $\G$ be a Lie $2$-group (\ref{ALie2Gp}). In order to make clear the 
difference between the group operation and the groupoid operation in $\G$, 
we assume the following convention: 
\begin{eqnarray*}
\gamma_1\vJoin\gamma_2, & \gamma_3\Join\gamma_4
\end{eqnarray*}
stand respectively for the group multiplication and the groupoid 
multiplication whenever $(\gamma_1,\gamma_2)\in\G^2$ and 
$(\gamma_3,\gamma_4)\in\G\times_H\G$. This notation intends to reflect that 
we think of the group multiplication as being ``vertical'', and the 
groupoid multiplication as being ``horizontal''.

Since the source map is a surjective submersion and the unit map is a 
canonical splitting thereof, letting $G$ be the Lie subgroup 
$\ker s\leq\G$, $\G\cong G\times H$. The associated crossed module map is 
given by $\xymatrix{G \ar[r]^{t\vert_{\ker s}} & H}$. Thinking of the 
underlying $2$-vector space of a Lie $2$-algebra (\ref{ALie2Alg}) as an 
abelian Lie $2$-group, this construction yields a canonical isomorphism 
$\gg_1\cong\gg\oplus\hh$, where $\gg:=\ker\hat{s}$ and the crossed module 
map $\xymatrix{\gg \ar[r]^{\hat{t}\vert_{\gg}} & \hh}$.

In the Lie $2$-group case, the right action is given by conjugation by 
units in the group $\G$
\begin{align}\label{GpAction}
g^h :=u(h)^{-1}\vJoin g\vJoin u(h),
\end{align}  
for $h\in H$ and $g\in G$. We stress that the $-1$ power stands for the 
inverse of the group multiplication $\vJoin$. In the Lie $2$-algebra case, 
the action by derivations is given by 
\begin{align}\label{AlgAction}
\Lie_yx :=[u(y),x]_1,
\end{align} 
for $y\in\hh$ and $x\in\gg$. Here, $[\cdot,\cdot]_1$ stands for the Lie 
bracket of $\gg_1$. 

Conversely, given a crossed module $\xymatrix{G \ar[r]^i & H}$ as in 
Definition~\ref{GpCrossMod}, the space of arrows of its associated Lie 
$2$-group $\G$ is defined to be the product $G\times H$ together with 
the structural maps 
\begin{align*}
s(g,h)=h, & \qquad t(g,h)=hi(g), & \iota(g,h)=(g^{-1},hi(g)), & \qquad u(h)=(1,h), 
\end{align*}
\begin{align}\label{GpStrMaps}
(g',hi(g))\Join(g,h):=(gg',h) 
\end{align} 
for $h\in H$ and $g,g'\in G$. Thinking of the underlying $2$-term 
complex of vector spaces of a crossed module 
$\xymatrix{\gg \ar[r]^\mu & \hh}$ as in Definition~\ref{AlgCrossMod} as an 
abelian crossed module of Lie groups, this construction yields a 
$2$-vector space $\gg_1:=\gg\oplus\hh$ with structural maps given by the 
same formulae that we transcribe using additive notation 
\begin{align*}
\hat{s}(x,y)=y, & \qquad \hat{t}(x,y)=y+\mu(x), & \hat{\iota}(x,y)=(-x,y+\mu(x)), & \qquad \hat{u}(y)=(0,y),            
\end{align*}
\begin{equation}\label{AlgStrMaps}
(x',y+\mu(x))\Join(x,y)=\hat{m}(x',y+\mu(x);x,y):=(x+x',y)
\end{equation}
for $y\in\hh$ and $x,x'\in\gg$. 

In the Lie group case, $\G$ is endowed with the group structure of the  
semi-direct product $G\rtimes H$ with respect to the $H$-action. 
Explicitly, the product is given by 
\begin{align}\label{ArrowsProduct}
(g_1,h_1)\vJoin(g_2,h_2)=(g_1^{h_2}g_2,h_1h_2),
\end{align}
for $(g_1,h_1),(g_2,h_2)\in G\rtimes H$. In the Lie algebra case, $\gg_1$ 
is endowed with the bracket of the semi-direct sum $\gg\oplus_\Lie\hh$, 
explicitly given by  
\begin{align}\label{ArrowsBracket}
[(x_0,y_0),(x_1,y_1)]_\Lie:=([x_0,x_1]+\Lie_{y_0}x_1-\Lie_{y_1}x_0,[y_0,y_1]),
\end{align}
for $(x_0,y_0),(x_1,y_1)\in\gg\oplus\hh$. 

\subsubsection{The general linear Lie 2-group and the linear Lie 2-algebra}\label{sss-LinAndRep}
The General Linear Lie $2$-group $GL(\phi)$ \cite{Norrie:1987,Sheng_Zhu2:2012} 
is the Lie $2$-group which plays the r\^ole of space of automorphisms of 
the $2$-vector space $\xymatrix{W\ar[r]^\phi & V}$. $GL(\phi)$ is by
definition the crossed module
\begin{align}\label{GLinCrossMod}
\xymatrix{
\Delta:GL(\phi)_1 \ar[r] & GL(\phi)_0:A \ar@{|->}[r] & (I+A\phi ,I+\phi A),
}
\end{align}
where
\begin{align*}
GL(\phi)_1 & =\lbrace A\in Hom(V,W):(I+A\phi ,I+\phi A)\in GL(W)\times GL(V)\rbrace, \\
GL(\phi)_0 & =\lbrace(F,f)\in GL(W)\times GL(V):\phi\circ F=f\circ\phi\rbrace, 
\end{align*}
and the right action given by
\begin{align}\label{act}
A^{(F,f)} & :=F^{-1}Af, & \textnormal{for }(F,f)\in GL(\phi)_0,\quad A\in GL(\phi)_1.
\end{align}
The group structure on the Whitehead group $GL(\phi)_1$ is 
given by
\begin{align}\label{gpStr}
A_1\odot A_2 & := A_1+A_2+A_1\phi A_2, & A_1,A_2\in GL(\phi)_1,
\end{align} 
whose identity element is the $0$ map, and where the inverse of an element 
$A$ is given by either $-A(I+\phi A)^{-1}=-(I+A\phi )^{-1}A$.

The Lie $2$-algebra of $GL(\phi)$ is $\ggl(\phi)$, explicitly given 
by \cite{Sheng_Zhu2:2012}
\begin{align}\label{LinCrossMod}
\xymatrix{
\Delta' :\ggl(\phi)_1 \ar[r] & \ggl(\phi)_0:A \ar@{|->}[r] & (A\phi,\phi A),
}
\end{align} 
where
\begin{align*}
\ggl(\phi)_1 & =Hom(V,W), & \ggl(\phi)_0 & =\lbrace (F,f)\in\ggl(W)\oplus\ggl(V):\phi\circ F=f\circ\phi\rbrace, 
\end{align*}
and the action is given by
\begin{align}\label{linAct}
\Lie^{\phi}_{(F,f)}A & :=FA-Af, & \textnormal{for }(F,f)\in\ggl(\phi)_0,\quad A\in\ggl(\phi)_1.
\end{align}
The Lie bracket on $\ggl(\phi)_1$ is given by 
\begin{align}\label{linBrack}
[A_1,A_2]_\phi & := A_1\phi A_2-A_2\phi A_1, & A_1,A_2\in\ggl(\phi)_1.
\end{align}

Regarding the maps (\ref{GLinCrossMod}) and (\ref{LinCrossMod}) as diagonal 
homomorphisms to $GL(W\oplus V)$ and $\ggl(W\oplus V)$, one can deduce the 
formula for the exponential  
$\xymatrix{\exp_{GL(\phi)_1} :\ggl(\phi)_1 \ar[r] & GL(\phi)_1}$, 
\begin{align}\label{TheExpOfGL(phi)}
\exp_{GL(\phi)_1}(A)=A\sum_{n=0}^\infty\frac{(\phi A)^n}{(n+1)!}=\sum_{n=0}^\infty\frac{(A\phi)^n}{(n+1)!}A.
\end{align}

A representation of a Lie $2$-group $\G$ with crossed module 
$\xymatrix{G \ar[r]^i & H}$ on $\xymatrix{W \ar[r]^\phi & V}$ is a morphism 
of Lie $2$-groups 
\begin{align}\label{GpRep}
\xymatrix{
\rho:\G \ar[r] & GL(\phi).
}
\end{align}
Explicitly, $\rho$ consists of Lie group representations of $H$ on $W$ and 
on $V$, 
\begin{align}\label{GpRho0}
\xymatrix{
\rho_0:H \ar[r] & GL(\phi)_0\leq GL(W)\times GL(V):h \ar@{|->}[r] & (\rho_0^1(h),\rho_0^0(h)), 
}
\end{align}
intertwining $\phi$, \textit{i.e.}, such that 
$\phi\circ\rho_0^1(h)=\rho_0^0(h)\circ\phi$ for all $h\in H$; and a Lie 
group homomorphism
\begin{align}\label{GpRho1}
\xymatrix{
\rho_1:G \ar[r] & GL(\phi)_1, 
}
\end{align}
so that
\begin{align}\label{GpRho1Homo}
\rho_1(g_0g_1) & =\rho_1(g_0)+\rho_1(g_1)+\rho_1(g_0)\circ\phi\circ\rho_1(g_1), & \textnormal{for all }g_0,g_1\in G.
\end{align}
Due to the compatibility with the crossed module structure, the following 
relations hold for all $g\in G$ and $h\in H$:
\begin{align}\label{GpRepEqns}
\rho^0_0(i(g)) & =I+\phi\circ\rho_1(g), & \rho^1_0(i(g)) & =I+\rho_1(g)\circ\phi, & \rho_1(g^h) & =\rho_0^1(h)^{-1}\rho_1(g)\rho_0^0(h).
\end{align}

A representation of a Lie $2$-algebra $\gg_1$ with crossed module 
$\xymatrix{\gg \ar[r]^\mu & \hh}$ on $\xymatrix{W \ar[r]^\phi & V}$ is a 
morphism of Lie $2$-algebras 
\begin{align}\label{AlgRep}
\xymatrix{
\rho:\gg \ar[r] & \ggl(\phi).
}
\end{align}
Explicitly, $\rho$ consists of Lie algebra representations of $\hh$ on $W$ 
and on $V$, 
\begin{align}\label{AlgRho0}
\xymatrix{
\rho_0:\hh \ar[r] & \ggl(\phi)_0\leq\ggl(W)\oplus\ggl(V):y \ar@{|->}[r] & (\rho_0^1(y),\rho_0^0(y)), 
}
\end{align}
intertwining $\phi$, \textit{i.e.}, such that 
$\phi\circ\rho_0^1(y)=\rho_0^0(y)\circ\phi$ for all $y\in\hh$; and a Lie 
algebra homomorphism
\begin{align}\label{AlgRho1}
\xymatrix{
\rho_1:\gg \ar[r] & \ggl(\phi)_1. 
}
\end{align}
Due to the compatibility with the crossed module structure, the following 
relations hold for all $x\in\gg$ and $y\in\hh$:
\begin{align}\label{AlgRepEqns}
\rho^0_0(\mu(x)) & =\phi\circ\rho_1(x), & \rho^1_0(\mu(x)) & =\rho_1(x)\circ\phi, & \rho_1(\Lie_yx) & =\rho_0^1(y)\rho_1(x)-\rho_1(x)\rho_0^0(y),
\end{align}
where $\Lie$ is the action of $\hh$ on $\gg$. 

Clearly, the differential of a Lie $2$-group representation $\rho$ yields a 
representation $\dot{\rho}$ of its Lie $2$-algebra. 

\begin{example}\label{2ad}[The adjoint representation] 
Define the adjoint representation of a Lie $2$-algebra on itself by:
\begin{align*}
\ad_1:   & \xymatrix{\gg \ar[r] & \ggl(\mu)_1} & \ad_1(x)(u)   & :=-\Lie_u x ,\quad x\in\gg, u\in\hh \\
\ad_0^1: & \xymatrix{\hh \ar[r] & \ggl(\gg)}   & \ad_0^1(y)(v) & :=\Lie_y v ,\quad y\in\hh, v\in\gg \\
\ad_0^0: & \xymatrix{\hh \ar[r] & \ggl(\hh)}   & \ad_0^0(y)(u) & :=[y,u], \quad y,u\in\hh.    
\end{align*}
\end{example}

\subsubsection{The cochain complexes with coefficients}\label{sss-Cxs}
We define the differentials $\nabla$ for the complexes in (\ref{Cxs}). As 
it was briefly stated, both differentials are defined as a graded sum of 
the differentials in a three-dimensional grid whose vertices are the spaces 
in (\ref{Alg3dimLat}) and (\ref{Gp3dimLat}) together with 
\textit{difference maps} that account for the fact that not all these 
commute.

Let $\G$ be a Lie $2$-group (\ref{ALie2Gp}) with associated crossed module 
$\xymatrix{G \ar[r]^i & H}$. The face maps of the simplicial structure on 
the nerve of $\G$ are given by 
\begin{align}\label{faceMaps}
\partial _k (\gamma_0,...,\gamma_p)=\begin{cases}
(\gamma_1,...,\gamma_p)& \textnormal{if }k=0 \\
(\gamma_0,...,\gamma_{k-2},\gamma_{k-1}\Join\gamma_k,\gamma_{k+1},...,\gamma_p)& \textnormal{if }0<k\leq p \\
(\gamma_0,...,\gamma_{p-1})& \textnormal{if }k=p+1,\end{cases}
\end{align} 
for a given element $(\gamma_0,...,\gamma_p)\in\G_{p+1}$. Under the 
canonical isomorphism $\G\cong G\times H$ (see Subsection~\ref{sss-Equiv}), 
the space of $p$-composable arrows $\G_p$ corresponds to $G^p\times H$; 
hereafter, we consider this isomorphism to be fixed and treat it as an 
equality when necessary. For each coordinate $\gamma_j$ of 
$\gamma=(\gamma_1,...,\gamma_p)\in\G_p\leq\G^p$, there is a corresponding 
$(g_j,h_j)\in G\times H$. According to Eq. (\ref{GpStrMaps}), the defining 
relation for $\G_p$ then reads $h_j=h_{j+1}i(g_{j+1})$, thus making the 
map $\xymatrix{\gamma \ar@{|->}[r] & (g_1,...,g_p;h_p)}$ an isomorphism 
with inverse 
$\xymatrix{(g_1,...,g_p;h) \ar@{|->}[r] & (g_1,hi(g_p...g_2);...;g_{p-1},hi(g_p);g_p,h)}$. 
Under this isomorphism, the face maps become
\begin{align}\label{2GpFaceMaps}
\partial_k(g_0,...,g_p;h)=\begin{cases}
(g_1,...,g_p;h) & \textnormal{if }k=0 \\
(g_0,...,g_{k-2},g_kg_{k-1},g_{k+1},...,g_p;h) & \textnormal{if }0<k\leq p \\
(g_0,...,g_{p-1};hi(g_p))& \textnormal{if }k=p+1.\end{cases}
\end{align}
Let 
$\xymatrix{t_p:\G_p \ar[r] & H:(\gamma_1,...\gamma_p) \ar@{|->}[r] & t(\gamma_1)}$ 
be the map that returns the final target of a $p$-tuple of composable 
arrows. We remark that $t_p$ is a composition of face maps and hence a 
group homomorphism, and rewrite it as
\begin{align}
\xymatrix{
t_p:G^p\times H \ar[r] & H:(g_1,...,g_p;h) \ar@{|->}[r] & hi\Big{(}\prod_{j=0}^{p-1}g_{p-j}\Big{)}=hi(g_p...g_1).
}
\end{align}
If $E$ is a vector bundle over $H$ and 
$\xymatrix{\G{}_s\times_H E \ar[r] & E:(\gamma,e) \ar@{|->}[r] & \Delta_\gamma e}$ 
is a left action along the projection, the differential of the complex of 
Lie groupoid cochains with values on $E$, 
$\xymatrix{\partial:C^\bullet(\G;E)\ar[r] & C^{\bullet+1}(\G;E)}$, where 
$C^p(\G;E):=\Gamma(t_p^*E)$, is defined by 
\begin{align}\label{LGpdDifferential}
(\partial\varphi)(\gamma_0,...,\gamma_p) & =\Delta_{\gamma_0}\partial_0^*\varphi(\gamma_0,...,\gamma_p)+\sum_{k=1}^{p+1}(-1)^k\partial_k^*\varphi(\gamma_0,...,\gamma_p) 
\end{align} 
for $\varphi\in C^p(\G;E)$ and $(\gamma_0,...,\gamma_p)\in\G_{p+1}$. 
Analogously, when $E$ is a right representation, one uses the initial 
source map to define $C^p(\G;E):=\Gamma(s_p^*E)$, and, keeping the notation 
for the action, the differential for $\varphi\in C^p(\G;E)$ and 
$(\gamma_0,...,\gamma_p)\in\G_{p+1}$ is defined to be
\begin{align}\label{RGpdDifferential}
(\partial\varphi)(\gamma_0,...,\gamma_p) & =\sum_{k=0}^{p}(-1)^k\partial_k^*\varphi(\gamma_0,...,\gamma_p)+(-1)^{p+1}\Delta_{\gamma_p}\partial_{p+1}^*\varphi(\gamma_0,...,\gamma_p). 
\end{align}
The differential of the Chevalley-Eilenberg complex of a Lie algebra $\gg$ 
with values in a representation $\rho$ on the vector space $V$, 
\begin{align*}
\xymatrix{
\delta_{CE}:\bigwedge^\bullet\gg^*\otimes V \ar[r] & \bigwedge^{\bullet +1}\gg^*\otimes V,
}
\end{align*}
is defined by 
\begin{align}\label{CEDiff}
(\delta_{CE}\omega)(X) & =\sum_{j=0}^q(-1)^j\rho(x_j)\omega(X(j))+\sum_{m<n}(-1)^{m+n}\omega([x_m,x_n],X(m,n))
\end{align}
for $\omega\in\bigwedge^q\gg^*\otimes V$ and $X=(x_0,...,x_q)\in\gg^{q+1}$. 
We adopt the convention that, for $0\leq a_1<...<a_k\leq q$, 
\begin{align}\label{conv}
 X(a_1,...,a_k) & :=(x_0,...,x_{a_1-1},x_{a_1+1},...,x_{a_k-1},x_{a_k+1},...,x_{q}),
\end{align}
as opposed to the usual $\hat{\cdot}$ notation.

To define the complex of Lie $2$-algebra cochains with values on 
$\xymatrix{W \ar[r]^\phi & V}$, fix a Lie $2$-algebra (\ref{ALie2Alg}) 
with associated crossed module $\xymatrix{\gg \ar[r]^\mu & \hh}$ whose 
action we write $\Lie$ and fix a representation $\rho$ (\ref{AlgRep}). 
We think of $\rho$ as a triple $(\rho_0^1,\rho_0^0;\rho_1)$, where 
$\rho_0=(\rho_0^1,\rho_0^0)$ is the map (\ref{AlgRho0}) and $\rho_1$ is 
the map (\ref{AlgRho1}). Let $C^{p,q}_r(\gg_1,\phi)$ be given by
(\ref{Alg3dimLat}). 

{\bf The p-direction} - For constant $r$, when $q=0$, one has got the trivial complexes
\begin{align*}
\xymatrix{
C^{0,0}_r(\gg_1,\phi) \ar[r]^{\partial=0} & C^{1,0}_r(\gg_1,\phi) \ar[r]^{\partial=Id} & C^{2,0}_r(\gg_1,\phi) \ar[r]^{\partial=0} & C^{3,0}_r(\gg_1,\phi) \ar[r] & \cdots ;
}
\end{align*}
for $q>0$, we set the complexes
\begin{eqnarray*}
\xymatrix{
\partial:\bigwedge^q\gg_\bullet^*\otimes V \ar[r] & \bigwedge^q\gg_{\bullet+1}^*\otimes V
} & \textnormal{and} & \xymatrix{
\partial:\bigwedge^q\gg_\bullet^*\otimes\bigwedge^r\gg^*\otimes W \ar[r] & \bigwedge^q\gg_{\bullet+1}^*\otimes\bigwedge^r\gg^*\otimes W
}
\end{eqnarray*}
to be the subcomplexes of multilinear alternating groupoid cochains of 
the complex of $\xymatrix{\gg _1^q \ar@<0.5ex>[r] \ar@<-0.5ex>[r] & \hh^q}$ 
with values in the trivial representation on 
$\xymatrix{\hh^q\times V \ar[r] & \hh^q}$ and 
$\xymatrix{\hh^q\times(\bigwedge^r\gg^*\otimes W) \ar[r] & \hh^q}$ 
respectively. 

{\bf The q-direction} - For constant $p$ and $r=0$, we set the complex 
\begin{align*}
\xymatrix{
\delta:\bigwedge^\bullet\gg_p^*\otimes V \ar[r] & \bigwedge^{\bullet+1}\gg_p^*\otimes V
}
\end{align*}
to be the Chevalley-Eilenberg complex of $\gg_p$ with values in 
the pull-back representation $\rho_p:=\hat{t}_p^*\rho_0^0$ on $V$. 
For constant $r>0$, we set the complex 
\begin{align*}
\xymatrix{
\delta:\bigwedge^\bullet\gg_p^*\otimes\bigwedge^r\gg^*\otimes W \ar[r] & \bigwedge^{\bullet+1}\gg_p^*\otimes\bigwedge^r\gg^*\otimes W
}
\end{align*}
to be the Chevalley-Eilenberg complex of $\gg_p$ with values in 
the pull-back representation $\rho_p^{(r)}:=\hat{t}_p^*\rho^{(r)}$ on 
$\bigwedge^r\gg^*\otimes W$, where 
$\xymatrix{\rho^{(r)}:\hh \ar[r] & \ggl(\bigwedge ^r\gg ^*\otimes W)}$ is 
the dual representation given for $\omega\in\bigwedge^r\gg^*\otimes W$, 
$y\in\hh$ and $z_1,...,z_r\in\gg$ by
\begin{align}\label{q-rep}
\rho ^{(r)}(y)\omega(z_1,...,z_r) & :=\rho_0^1(y)\omega(z_1,...,z_r)-\sum_{k=1}^r\omega(z_1,...,\Lie _y z_k,...,z_r).
\end{align} 

{\bf The r-direction} - For constant $p$ and $q$, we set the complex
\begin{align*}
\xymatrix{
\delta_{(1)}:\bigwedge^q\gg_p^*\otimes\bigwedge^{\bullet}\gg ^*\otimes W \ar[r] & \bigwedge^q\gg_p^*\otimes\bigwedge ^{\bullet+1}\gg^*\otimes W
}
\end{align*}
to be the Chevalley-Eilenberg complex of $\gg$ with values in 
$\xymatrix{\rho_{(1)}:\gg \ar[r] & \ggl(\bigwedge^q\gg _p^*\otimes W)}$ 
given for $\omega\in\bigwedge^q\gg _p^*\otimes W$, $\Xi\in\gg_p^q$ and 
$z\in\gg$ by
\begin{eqnarray}\label{rRep}
\rho_{(1)}(z)\omega(\Xi):=\rho_0^1(\mu(z))\omega(\Xi).
\end{eqnarray}
Since the $0$th degree is $\bigwedge^q\gg_p^*\otimes V$ instead of 
$\bigwedge^q\gg_p^*\otimes W$, we specify the first differential to be
\begin{align}\label{Alg1st r}
\xymatrix{
\delta':\bigwedge^q\gg_p^*\otimes V \ar[r] & \bigwedge^q\gg_p^*\otimes\gg^*\otimes W
} & :\delta'\omega(\Xi;z)=\rho_1(z)\omega(\Xi),
\end{align}
where $\Xi\in\gg_p^q$ and $z\in\gg$. 

{\bf Difference maps} -  The $k$th difference map  
\begin{align*}
\xymatrix{
\Delta_{k}:C^{p,q}_r(\gg_1,\phi) \ar[r] & C^{p+1,q+k}_{r-k}(\gg_1,\phi)
}
\end{align*}
is defined for $\omega\in C^{p,q}_r(\gg_1,\phi)$, $Z\in\gg^{r-k}$ and 
$\Xi=(\xi_0,...,\xi_{q+k-1})\in\gg_{p+1}^{q+k}$ by
\begin{align}\label{AlgHigherDiffMaps}
(\Delta_k\omega)(\Xi;Z) & :=\sum_{a_1<...<a_k}(-1)^{a_1+...+a_k}\omega(\hat{\partial}_0\Xi(a_1,...,a_k);x_{a_1}^0,...,x_{a_k}^0,Z).
\end{align}
Here, we used the notation convention (\ref{conv}), and identified each 
$\xi_j$ with $j\in\lbrace0,...,q+k-1\rbrace$ with $(x_j^0,...,x_j^p;y_j)$. 
In the special case $k=r$, the map $\Delta_r$ is essentially defined by 
Eq.~(\ref{AlgHigherDiffMaps}), but composed with $\phi$, so that 
it takes values in the right vector space. We sometimes drop the subindex 
of the first difference map and write $\Delta$ instead of $\Delta_1$.

The three dimensional grid knit by $\partial$, $\delta$ and $\delta_{(1)}$ 
falls short of defining a triple complex because $\partial$ and $\delta$ do 
not commute. Since all difference maps are homogeneous of degree $+1$ with 
respect to the diagonal grading, it makes sense to use them as 
differentials: 
\begin{align}\label{AlgNabla}
\nabla & :=\partial+(-1)^{p+q}(\delta+\delta_{(1)})+\sum_{k=1}^r\Delta_k .
\end{align}
The higher difference maps in Eq.~(\ref{AlgNabla}) make up for the 
non-commuting differentials, so that $\nabla$ squares to zero 
\cite{Angulo1:2020}. Thus defined, the complex 
$(C_\nabla(\gg_1,\phi),\nabla)$ verifies the following property.
\begin{theorem}\cite{Angulo1:2020}\label{H2Alg}
$H^2_\nabla(\gg_1,\phi)$ is in one-to-one correspondence with 
isomorphism classes of extensions of the Lie $2$-algebra 
$\gg_1$ by the $2$-vector space $\xymatrix{W\ar[r]^\phi & V.}$
\end{theorem}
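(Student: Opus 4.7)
The plan is to adapt the classical proof that the second Chevalley--Eilenberg cohomology classifies Lie algebra extensions by a module, now in the setting of crossed modules. Given an extension
$$0 \longrightarrow (W \stackrel{\phi}{\longrightarrow} V) \longrightarrow (\hat\gg \stackrel{\hat\mu}{\longrightarrow} \hat\hh) \longrightarrow (\gg \stackrel{\mu}{\longrightarrow} \hh) \longrightarrow 0,$$
I would first choose a linear section consisting of two linear splittings $\sigma_\hh:\hh\to\hat\hh$ and $\sigma_\gg:\gg\to\hat\gg$, yielding vector-space identifications $\hat\hh\cong\hh\oplus V$ and $\hat\gg\cong\gg\oplus W$. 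Nothing forces $\sigma_\hh$ and $\sigma_\gg$ to respect the brackets, the action or the crossed-module map, so transporting these structures through the sections produces a list of obstructions, and the body of the theorem amounts to recognising these obstructions as the components of a single $\nabla$-cocycle.

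Concretely, I would read off: the Jacobi defect of $\sigma_\hh$ on $\hh$ as an element of $C^{0,2}_0=\bigwedge^2\hh^*\otimes V$; the defect of $(\sigma_\hh,\sigma_\gg)$ to intertwine the actions as an element of $C^{0,1}_1=\hh^*\otimes\gg^*\otimes W$; the bracket defect of $\sigma_\gg$ on $\gg$ (which, via infinitesimal Peiffer, is really an action defect on $\gg\times\gg$) as an element of $C^{0,0}_2=\bigwedge^2\gg^*\otimes W$; and the crossed-module defect $\hat\mu\circ\sigma_\gg-\sigma_\hh\circ\mu$ as an element of $C^{1,1}_0=\gg_1^*\otimes V$. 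The remaining summands $C^{1,0}_1$ and $C^{2,0}_0$ record the compatibility between the sections and the groupoid composition~(\ref{AlgStrMaps}). Packaged as a single $\omega\in C^2_\nabla(\gg_1,\phi)$, the axioms of Definition~\ref{AlgCrossMod} satisfied by $\hat\gg\to\hat\hh$---Jacobi in both $\hat\hh$ and $\hat\gg$, the derivation property of the action, equivariance and infinitesimal Peiffer---translate termwise into the components of $\nabla\omega=0$ with $\nabla$ as in~(\ref{AlgNabla}).

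Independence of the section follows by noting that a different choice $(\sigma_\hh+\tau_\hh,\sigma_\gg+\tau_\gg)$, with $\tau_\hh:\hh\to V$ and $\tau_\gg:\gg\to W$, shifts the cocycle by $\nabla(\tau_\hh,\tau_\gg)$, and that any isomorphism of extensions restricts to such a change of section. Conversely, given a $2$-cocycle $\omega$, I would take $\hat\hh:=\hh\oplus V$ and $\hat\gg:=\gg\oplus W$ and twist the semi-direct-sum structure of~(\ref{ArrowsBracket}) by the components of $\omega$; the equation $\nabla\omega=0$ is exactly what forces the twisted data to obey the crossed-module axioms, producing an extension whose associated cocycle, relative to the obvious section, recovers $\omega$.

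The main obstacle is the combinatorial bookkeeping. The total differential $\nabla=\partial+(-1)^{p+q}(\delta+\delta_{(1)})+\sum_k\Delta_k$ couples three honest differentials with the difference maps $\Delta_k$, and $\nabla\omega=0$ mixes all six components of $\omega$ among themselves. One must verify that each coupling produced by $\partial$, $\delta$, $\delta_{(1)}$ and the $\Delta_k$ matches exactly one crossed-module identity or one of their compatibilities; this is where the difference maps earn their keep, since they are precisely what repairs the failure of $\partial$ and $\delta$ to commute, and without them the correspondence between $\nabla$-cocycles and crossed-module axioms would collapse.
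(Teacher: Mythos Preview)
The paper does not prove this theorem; it is quoted from \cite{Angulo1:2020} and stated here without argument. Consequently there is no proof in the present paper to compare against, and your outline is essentially the expected strategy---and, as far as one can infer from the way the paper uses the result (see the discussion surrounding (\ref{CocGpExt})--(\ref{CocAlgExt}), where a $2$-cocycle is identified with the quadruple $(\varphi,\omega_0,\alpha,\omega_1)\in C^{1,1}_0\oplus C^{0,2}_0\oplus C^{0,1}_1\oplus C^{0,0}_2$ defining a twisted crossed module), it matches the approach of the cited reference.

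Two small points. First, a terminological slip: what you call the ``Jacobi defect of $\sigma_\hh$'' is the \emph{bracket} defect $[\sigma_\hh(-),\sigma_\hh(-)]_{\hat\hh}-\sigma_\hh([-,-])$; the Jacobi identity in $\hat\hh$ is what forces this defect to be a Chevalley--Eilenberg $2$-cocycle, not what it measures. Second, your remark that $C^{1,0}_1$ and $C^{2,0}_0$ ``record the compatibility between the sections and the groupoid composition'' is slightly off: for $q=0$ the $p$-direction differentials are the trivial ones ($\partial=0$, then $\partial=\mathrm{Id}$, then $\partial=0,\dots$), so these summands carry no independent extension data---the effective $2$-cocycle lives in the four pieces the paper lists. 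This does not affect the correctness of your plan, only the bookkeeping.
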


To define the complex of Lie $2$-group cochains with values on 
$\xymatrix{W \ar[r]^\phi & V}$, fix a Lie $2$-group (\ref{ALie2Gp}) 
with associated crossed module $\xymatrix{G \ar[r]^i & H}$ for whose 
action we use exponential notation and fix a representation $\rho$ 
(\ref{GpRep}). We think of $\rho$ as a triple $(\rho_0^1,\rho_0^0;\rho_1)$, 
where $\rho_0=(\rho_0^1,\rho_0^0)$ is the map (\ref{GpRho0}) and $\rho_1$ 
is the map (\ref{GpRho1}). Let $C^{p,q}_r(\G,\phi)$ be given by
(\ref{Gp3dimLat}). 

{\bf The p-direction} - For constant $r$, when $q=0$, one has got the trivial 
complexes
\begin{align*}
\xymatrix{
C^{0,0}_r(\G,\phi) \ar[r]^{\partial=0} & C^{1,0}_r(\G,\phi) \ar[r]^{\partial=Id} & C^{2,0}_r(\G,\phi) \ar[r]^{\partial=0} & C^{3,0}_r(\G,\phi) \ar[r] & \cdots 
}
\end{align*}
When $q>0$ and $r=0$, we set the complex
\begin{align*}
\xymatrix{
\partial:C(\G_{\bullet}^q,V) \ar[r] & C(\G_{\bullet+1}^q,V)
}
\end{align*}
to be the cochain complex of the product groupoid 
$\xymatrix{\G^q \ar@<0.5ex>[r]\ar@<-0.5ex>[r] & H^q}$ with respect to the 
trivial representation on the vector bundle 
$\xymatrix{H^q\times V \ar[r] & H^q}$. For any other value of $r$, we 
set the complex
\begin{align*}
\xymatrix{
\partial:C(\G_{\bullet}^q\times G^r,W) \ar[r] & C(\G_{\bullet+1}^q\times G^r,W)
}
\end{align*}
to be the cochain complex of the product groupoid 
$\xymatrix{\G^q\times G^r \ar@<0.5ex>[r]\ar@<-0.5ex>[r] & H^q\times G^r}$ 
with respect to the left representation on the trivial bundle 
$\xymatrix{H^q\times G^r\times W \ar[r] & H^q\times G^r}$ given for 
$\gamma_k=(g_k,h_k)\in\G$ and $\vec{f}\in G^r$ by
\begin{align}\label{p-rep}
(\gamma_1,...,\gamma_q;\vec{f})\cdot(h_1,...,h_q;\vec{f},w) & :=(h_1i(g_1),...,h_qi(g_q);\vec{f},\rho_0^1(i(pr_G(\gamma_1\vJoin\cdots\vJoin\gamma_q)))^{-1}w). 
\end{align}

{\bf The q-direction} - When $r=0$, we set the complex 
\begin{align*}
\xymatrix{
\delta:C(\G_p^{\bullet},V) \ar[r] & C(\G_p^{\bullet+1},V)
}
\end{align*}
to be the group complex of $\G_p$ with values in the pull-back of the 
representation $\rho_0^0$ along the final target map $t_p$; when 
$r\neq 0$, we set the complex 
\begin{align*}
\xymatrix{
\delta:C(\G_p^{\bullet}\times G^r,W) \ar[r] & C(\G_p^{\bullet+1}\times G^r,W)
}
\end{align*}
to be the cochain complex of the (right!) transformation groupoid 
$\xymatrix{\G_p\ltimes G^r \ar@<0.5ex>[r]\ar@<-0.5ex>[r] & G^r}$ with 
respect to the right representation
\begin{align}\label{q-Rep}
(g_1,...,g_r;w)\cdot(\gamma;g_1,...,g_r) & :=(g_1^{t_p(\gamma)},...,g_r^{t_p(\gamma)};\rho_0^1(t_p(\gamma))^{-1}w)
\end{align}
on the trivial vector bundle $\xymatrix{G^r\times W \ar[r] & G^r}$, where 
$g_1,...,g_r\in G$, $\gamma\in\G_p$ and $w\in W$. When writing the groupoid 
differential, we use the shorthand $\rho^r_{\G_p}(\gamma;g)w$ instead of 
the lengthier Eq. (\ref{q-Rep}).
 
{\bf The r-direction} - When $q=0$, we set the complex 
\begin{align*}
\xymatrix{
\delta_{(1)}:C(G^{\bullet},W) \ar[r] & C(G^{\bullet+1},W)
}
\end{align*}
to be the group complex of $G$ with values in the pull-back of the 
representation $\rho_0^1$ along the crossed module homomorphism $i$, but 
for the $0$th degree; when $q\neq 0$, we set the complex 
\begin{align}\label{rDiff}
\xymatrix{
\delta_{(1)}:C(\G_p^q\times G^{\bullet},W) \ar[r] & C(\G_p^q\times G^{\bullet+1},W),
}
\end{align}
again except for the $0$th degree, to be the cochain complex of the Lie 
group bundle 
$\xymatrix{\G_p^q\times G \ar@<0.5ex>[r]\ar@<-0.5ex>[r] & \G_p^q}$ with 
respect to the left representation
\begin{align}\label{r-Rep}
(\gamma_1,...,\gamma_q;g)\cdot(\gamma_1,...,\gamma_q;w) & :=(\gamma_1,...,\gamma_q;\rho_0^1(i(g^{t_p(\gamma_1)...t_p(\gamma_q)}))w)
\end{align}
on the trivial vector bundle $\xymatrix{G_p^q\times W \ar[r] & G_p^q}$, 
where $\gamma_1,...,\gamma_q\in\G_p$, $g\in G_p$ and $w\in W$.  
Though right and left representations of a Lie group bundle coincide, we 
emphasize that Eq. (\ref{r-Rep}) be taken as a left representation. 

The missing maps $\xymatrix{\delta':V \ar[r] & C(G,W)}$ and 
$\xymatrix{\delta':C(\G_p^q,V) \ar[r] & C(\G_p^q\times G,W)}$ are defined 
respectively for $v\in V$, $g\in G$, $\omega\in C(\G_p^q,V)$ and 
$\gamma_1,...,\gamma_q\in\G_p$ by
\begin{eqnarray}\label{Gp1st r}
(\delta'v)(g):=\rho_1(g)v & \textnormal{ and } & (\delta'\omega)(\gamma_1,...,\gamma_q;g)=\rho_0^1(t_p(\gamma_1)...t_p(\gamma_q))^{-1}\rho_1(g)\omega(\gamma_1,...,\gamma_q).
\end{eqnarray}

{\bf Difference maps} - In order to define the first difference maps, we 
introduce the following notation. We think of an element 
$\vec{\gamma}\in\G_p^q$ as having components
\begin{equation}\label{gammaMatrix}
\vec{\gamma}=\begin{pmatrix}
\gamma_1 \\ \vdots \\ \gamma_q
\end{pmatrix}=\begin{pmatrix}
    \gamma_{11} & \gamma_{12} & ... & \gamma_{1p} \\
    \gamma_{21} & \gamma_{22} & ... & \gamma_{2p} \\
   \vdots  & \vdots &     & \vdots \\
    \gamma_{q1} & \gamma_{q2} & ... & \gamma_{qp} \end{pmatrix}=\begin{pmatrix}
    g_{11} & g_{12} & ... & g_{1p} & h_1 \\
    g_{21} & g_{22} & ... & g_{2p} & h_2 \\
   \vdots  & \vdots &     & \vdots & \vdots \\
    g_{q1} & g_{q2} & ... & g_{qp} & h_q \end{pmatrix},
\end{equation}
where the last equality is a notation abuse corresponding to the 
row-wise isomorphism $\G_{p}\cong G^{p}\times H$. Here, for each value of 
$a$ and $b$, $(g_{ab},h_{ab})$ is the image of $\gamma_{ab}$ under the 
canonical isomorphism $\G\cong G\rtimes H$ (see Subsection~\ref{sss-Equiv}). 

We proceed to define the difference maps 
\begin{align*}
\xymatrix{
\Delta_{a,b}:C^{p,q}_r(\G,\phi) \ar[r] & C^{p+a,q+b}_{r+1-(a+b)}(\G,\phi)
}
\end{align*}
for $(a,b)\in\lbrace(1,1),(r,1),(1,r)\rbrace$. If $a+b=r+1$, 
$\omega\in C(\G_p^q\times G^{a+b-1},W)$ and $\vec{\gamma}\in\G_{p+a}^{q+b}$, 
set 
\begin{align}\label{Delta-ab}
(\Delta_{a,b}\omega)(\vec{\gamma}) & :=\rho_0^0(t_p(\partial_0^a\gamma_1)...t_p(\partial_0^a\gamma_{q+b}))\circ\phi\big{(}\omega(\partial_0^a\delta_0^b\vec{\gamma};c_{a,b}(\vec{\gamma}))\big{)},
\end{align}
where $\xymatrix{c_{a,b}:\G_{p+a}^{q+b} \ar[r] & G^r}$ are respectively
\begin{eqnarray}\label{most}
c_{1,1}(\vec{\gamma}):=g_{11}, & c_{r,1}(\vec{\gamma}):=(g_{1r},g_{1r-1},...,g_{12},g_{11}), & \textnormal{and }c_{1,r}(\vec{\gamma}):=(g_{11}^{h_{21}...h_{r1}},g_{21}^{h_{31}...h_{r1}},...,g_{(r-1)1}^{h_{r1}},g_{r1}).
\end{eqnarray}
When $r>1$, 
\begin{align*}
\xymatrix{
\Delta_{1,1}:C(\G_p^q\times G^{r},W) \ar[r] & C(\G_{p+1}^{q+1}\times G^{r-1},W)
}
\end{align*}
is defined for $\omega\in C(\G_p^q\times G^r,W)$, 
$\vec{f}=(f_1,...,f_{r-1})\in G^{r-1}$ and $\vec{\gamma}\in\G_{p+1}^{q+1}$ 
as in Eq. (\ref{gammaMatrix}) by
\begin{align*}
(\Delta_{1,1}\omega)(\vec{\gamma};\vec{f}) =\rho_0^1(i(pr_G(\gamma_{21}\vJoin\cdots\vJoin & \gamma_{(q+1)1})))^{-1}\Big{[}\rho_0^1(i(g_{11}^{h_{21}...h_{(q+1)1}}))^{-1}\omega(\partial_0\delta_0\vec{\gamma};(\vec{f})^{h_{11}},g_{11})+ \\
 & +\sum_{n=1}^{r-1}(-1)^{n+1}\big{(}\omega(\partial_0\delta_0\vec{\gamma};c_{2n-1}(\vec{f};\gamma_{11}))-\omega(\partial_0\delta_0\vec{\gamma};c_{2n}(\vec{f};\gamma_{11}))\big{)}\Big{]}, 
\end{align*}
where $(\vec{f})^{h_{11}}:=(f_1^{h_{11}},...,f_{r-1}^{h_{11}})$ and 
$\xymatrix{c_{2n-1},c_{2n}:G^{r-1}\times \G \ar[r] & G^r}$ are respectively 
given by
\begin{align}\label{c(11)}
c_{2n-1}(\vec{f};\gamma_{11}) & :=\Big{(}f_1^{h_{11}i(g_{11})},...,f_{r-n-1}^{h_{11}i(g_{11})},g_{11}^{-1},f_{r-n}^{h_{11}},...,f_{r-2}^{h_{11}},f_{r-1}^{h_{11}}g_{11}\Big{)}, \\
c_{2n}(\vec{f};\gamma_{11}) & :=\Big{(}f_1^{h_{11}i(g_{11})},...,f_{r-n-1}^{h_{11}i(g_{11})},g_{11}^{-1},f_{r-n}^{h_{11}},...,f_{r-2}^{h_{11}},g_{11}\Big{)},
\end{align}
for $0<n<r$. We often drop the subindex of the difference map 
$\Delta_{1,1}$ and write $\Delta$.

In the Lie $2$-group case, $\partial$ and $\delta$ do not commute either and 
stand in the way of yielding a triple complex. Nonetheless, the difference 
maps are homogeneous of degree $+1$ with respect to the diagonal grading, 
and make up for the non-commuting differentials \cite{Angulo2:2020} by 
setting: 
\begin{align}\label{GpNabla}
\nabla & :=(-1)^p\Big{(}\delta_{(1)}+\sum_{a+b>0}(-1)^{(a+1)(r+b+1)}\Delta_{a,b}\Big{)},
\end{align}
where $\Delta_{1,0}:=\partial$, $\Delta_{0,1}:=\delta$, and 
$\Delta_{a,0}=\Delta_{0,b}=0$ whenever $a,b>1$. Thus defined, the complex 
$(C_\nabla(\G,\phi),\nabla)$ verifies the following property.
\begin{theorem}\cite{Angulo2:2020}\label{H2Gp}
$H^2_\nabla(\G,\phi)$ is in one-to-one correspondence with isomorphism 
classes of split extensions of the Lie $2$-group $\G$ by the $2$-vector 
space $\xymatrix{W \ar[r]^\phi & V}$. 
\end{theorem}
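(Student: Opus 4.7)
The strategy is to establish the bijection at the level of cocycles via a choice of smooth splittings. Given a split extension of Lie $2$-groups, thought of as a short exact sequence $1 \to (W \to V) \to (\tilde{G} \to \tilde{H}) \to (G \to H) \to 1$ of crossed modules in which both rows admit a smooth splitting as short exact sequences of manifolds, I pick smooth set-theoretic sections $\sigma_H \colon H \to \tilde{H}$ and $\sigma_G \colon G \to \tilde{G}$. The defects of $\sigma_H$ and $\sigma_G$ with respect to the various structural operations assemble into a cochain $\omega \in C^2_\nabla(\G,\phi)$: the component in $C(H^2,V)$ records the group-cocycle defect of $\sigma_H$, the one in $C(G^2,W)$ that of $\sigma_G$, the component in $C(G,W)$ measures the failure of $\sigma_G$ to intertwine the crossed module maps, the component in $C(H\times G,W)$ measures the failure of $\sigma_G$ to intertwine the action of $H$ on $G$, while the remaining two components in $C(\G,V)$ and in $V$ encode the residual Peiffer- and identity-type discrepancies.

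I would next verify $\nabla\omega=0$ by matching, in total degree three and component by component, the associativity of the multiplications in $\tilde H$ and $\tilde G$ with the $\delta$ and $\delta_{(1)}$ relations, the compatibility of the crossed module map with the two multiplications with the $\partial$-direction relations, and the equivariance and Peiffer axioms of the crossed module $\tilde G\to\tilde H$ with the $\Delta_{a,b}$ pieces of (\ref{GpNabla}). Changing the splittings by smooth maps $\eta_V\colon H\to V$ and $\eta_W\colon G\to W$ alters $\omega$ by $\nabla\eta$, where $\eta\in C^1_\nabla(\G,\phi)$ assembles $\eta_V$ and $\eta_W$; this shows $[\omega]$ is independent of the splittings and that isomorphic split extensions give rise to cohomologous cocycles.

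Conversely, given a cocycle $\omega$, I equip $\tilde H:=H\times V$ and $\tilde G:=G\times W$ with multiplications twisted by the $C(H^2,V)$ and $C(G^2,W)$ components, an $\tilde H$-action on $\tilde G$ twisted by the $C(H\times G,W)$ component, and a crossed module map twisted by the $C(G,W)$ and $C(\G,V)$ components. The condition $\nabla\omega=0$ now translates pointwise into the associativity of each multiplication, the equivariance axiom, and the Peiffer identity, so that the result is a crossed module of Lie groups and hence a split Lie $2$-group extension of $\G$ by $W\to V$ whose canonical splittings recover $\omega$. A symmetric argument shows that cohomologous cocycles produce isomorphic split extensions, closing the bijection.

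The main obstacle is the bookkeeping imposed by the higher difference maps $\Delta_{a,b}$ of (\ref{GpNabla}), which couple the distinct components of $\omega$ through twisted conjugations as in (\ref{c(11)}). Sorting out signs, source/target conjugations, and $\phi$-insertions in order to match each summand of $\nabla\omega$ with the correct axiom of the extension is a lengthy if mechanical task, and I would defer the full verification to \cite{Angulo2:2020}.
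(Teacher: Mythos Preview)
The paper does not prove Theorem~\ref{H2Gp} in text; it is quoted from \cite{Angulo2:2020}. What the paper does provide is a description, immediately after the statement of the van Est map in Section~\ref{sec-TheMap}, of the cocycle-to-extension direction: a $2$-cocycle $\vec{\omega}=(\varphi,\omega_0,\alpha,\omega_1)$ gives the crossed module (\ref{CocGpExt}) with action (\ref{AlphaAction}). Your overall strategy---choose splittings, record their defects, verify that the cocycle equations unpack to the crossed-module axioms, and invert by building twisted semi-direct products---is exactly the approach sketched there and carried out in the reference, so in that sense your proposal is aligned with the paper.

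There is, however, a concrete misidentification in your component-by-component breakdown. The defect of $\sigma_G$ with respect to the crossed module maps is $\tilde{\imath}(\sigma_G(g))\,\sigma_H(i(g))^{-1}$, which lies in $V$, not in $W$; in the paper's notation this is precisely the value $\varphi\bigl(\begin{smallmatrix}g\\1\end{smallmatrix}\bigr)$ appearing in (\ref{CocGpExt}), and it lives in $C^{1,1}_0(\G,\phi)=C(\G,V)$, not in a separate $C(G,W)$ slot. Correspondingly, $\varphi$ is not a ``Peiffer-type discrepancy'' but the twist of the structural map of the extended crossed module. Relatedly, the paper's cocycle has only four essential components; the slots $C^{1,0}_1$ and $C^{2,0}_0$ you list do belong to $C^2_\nabla(\G,\phi)$ as a vector space, but the $q=0$ rows are the trivial complexes with $\partial$ alternating between $0$ and $\mathrm{Id}$, so these components are cohomologically inert and do not carry independent extension data. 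Once you correct the role of $\varphi$ and drop the two spurious components, your outline matches the correspondence the paper records.
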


\begin{remark}\label{Incomplete}
As of the writing of this paper, a general formula for $\Delta_{a,b}$ is 
still unavailable. In \cite{Angulo2:2020}, there are formulas for several 
families of difference maps and, in particular, the complex of Lie $2$-group 
cochains with values on a $2$-vector is defined up until degree $5$. Due to 
the scope of our application, we just included the necessary maps to define 
the complex up to degree $2$.
\end{remark}

\subsection{A brief excursus on homological algebra}\label{subsec-HomAlg}
As it was stated in the Introduction, we make use of the van Est theorem 
written in terms of the mapping cone. Though we could not find 
Proposition~\ref{ConeCoh} as stated in the literature, it follows 
from standard techniques that can be found in \cite{Weibel:1994}.

Given a map $\xymatrix{\Phi:(A^\bullet,d_A) \ar[r] & (B^\bullet,d_B)}$, 
the mapping cone of $\Phi$ is defined to be  
\begin{align*}
C(\Phi) & :=(A[1]\oplus B,d_\Phi ),
\end{align*}
where $d_\Phi=\begin{pmatrix}
-d_A & 0 \\
\Phi & d_B 
\end{pmatrix}$. $C(\Phi)$ is complex if and only if $\Phi$ is a map of 
complexes. We write $H(\Phi)$ for the cohomology of the mapping cone of 
$\Phi$.

\begin{proposition}\label{ConeCoh}
Let $\xymatrix{\Phi:A^\bullet \ar[r] & B^\bullet}$ be a map of complexes. 
The following are equivalent:
\begin{itemize}
\item[i)] $H^n(\Phi)=(0)$ for $n\leq k$.
\item[ii)] The induced map in cohomology
\begin{eqnarray*}
\xymatrix{
\Phi^n :H^n(A) \ar[r] & H^n(B),
}
\end{eqnarray*}
is an isomorphism for $n\leq k$, and it is injective for $n=k+1$.
\end{itemize}
\end{proposition}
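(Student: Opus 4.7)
The plan is to derive both implications from the long exact sequence in cohomology associated to the tautological short exact sequence of complexes
\begin{equation*}
0 \longrightarrow B \longrightarrow C(\Phi) \longrightarrow A[1] \longrightarrow 0,
\end{equation*}
where the injection is $b \mapsto (0,b)$ and the surjection is $(a,b) \mapsto a$. Its exactness degree by degree is immediate from the direct sum decomposition $C(\Phi)^n = A^{n+1} \oplus B^n$, and its compatibility with the differentials follows at once from the block form of $d_\Phi$ together with the sign convention $d_{A[1]} = -d_A$. Passing to cohomology, the crucial step is to identify the connecting homomorphism $\partial : H^n(A[1]) = H^{n+1}(A) \to H^{n+1}(B)$ with $\Phi^{n+1}$: a standard diagram chase shows that any cocycle $a \in A^{n+1}$ lifts to $(a,0) \in C(\Phi)^n$ and that $d_\Phi(a,0) = (-d_A a,\, \Phi(a)) = (0, \Phi(a))$, so $\partial[a] = [\Phi(a)]$.

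With this identification, the long exact sequence takes the form
\begin{equation*}
\cdots \longrightarrow H^n(A) \stackrel{\Phi^n}{\longrightarrow} H^n(B) \longrightarrow H^n(C(\Phi)) \longrightarrow H^{n+1}(A) \stackrel{\Phi^{n+1}}{\longrightarrow} H^{n+1}(B) \longrightarrow \cdots,
\end{equation*}
and both directions reduce to routine exact-sequence bookkeeping. For (i) $\Rightarrow$ (ii), if $H^n(C(\Phi)) = 0$ for all $n \leq k$, then exactness at $H^n(B)$ forces $\Phi^n$ to be surjective for $n \leq k$, while exactness at $H^{n+1}(A)$ forces $\Phi^{n+1}$ to be injective for $n+1 \leq k+1$; combined, these yield the isomorphisms in degrees $\leq k$ and the injection in degree $k+1$. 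For (ii) $\Rightarrow$ (i), the hypothesis makes the preceding map $H^n(B) \to H^n(C(\Phi))$ vanish (by surjectivity of $\Phi^n$) and the subsequent map $H^n(C(\Phi)) \to H^{n+1}(A)$ vanish (by injectivity of $\Phi^{n+1}$) whenever $n \leq k$; exactness at $H^n(C(\Phi))$ then squeezes it between two zero maps, forcing $H^n(C(\Phi)) = 0$.

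The argument is purely formal homological algebra, and no genuine obstacle is expected. The only points requiring vigilance are the degree-shift convention $A[1]^n = A^{n+1}$ and the sign appearing in the connecting morphism, neither of which affects the iso/injectivity conclusions; everything else is standard and can be sourced from \cite{Weibel:1994}.
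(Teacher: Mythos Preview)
Your proof is correct and follows exactly the same approach as the paper: both use the short exact sequence $0 \to B \to C(\Phi) \to A[1] \to 0$, pass to the long exact sequence, identify the connecting homomorphism with $\Phi^*$, and then read off the equivalence. The only difference is that you spell out the diagram chase and the exact-sequence bookkeeping in detail, whereas the paper stops at ``the proof is straightforward'' after recording the identification of the connecting map.
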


\begin{proof}
Clearly, $C(\Phi)$ fits in an exact sequence
\begin{align}\label{ExactConeSeq}
\xymatrix{0 \ar[r] & B \ar[r]^j & C(\Phi) \ar[r]^\pi & A[1] \ar[r] & 0, }
\end{align}
whose associated long exact sequence in cohomology is
\begin{eqnarray*}
\xymatrix{
0 \ar[r] & \cancelto{0}{H^{-1}(B)} \ar[r]^{j^{-1}} & H^{-1}(\Phi) \ar[r]^{\pi^{-1}} & H^{-1}(A[1]) \ar[r] & H^0(B) \ar[r]^{j^0} & H^0(\Phi) \ar[r]^{\pi^0} & ...  
}\\
\qquad\qquad\xymatrix{
... \ar[r]^{\pi^0\quad} & H^0(A[1]) \ar[r] & H^1(B) \ar[r]^{j^1} & H^1(\Phi) \ar[r]^{\pi^1} & H^1(A[1]) \ar[r] & ...
}
\end{eqnarray*}
After observing that $H^n(A[1])=H^{n+1}(A)$, and that the connecting 
homomorphism is $\Phi^*$, the proof is straightforward. 
\end{proof}

Proposition~\ref{ConeCoh} holds for the complexes defined by 
Eq.'s~(\ref{AlgNabla}) and (\ref{GpNabla}); however, it will be relevant to 
us that the cone of $\Phi$ can be endowed with a triple grading inherited 
from (\ref{Cxs}). First, suppose 
$\xymatrix{\Phi:A^{\bullet,\bullet} \ar[r] & B^{\bullet,\bullet}}$ is a 
map between double complexes, we define the \textit{mapping cone double} 
$C^{p,q}(\Phi)=A^{p,q+1}\oplus B^{p,q}$ by setting the $p$th column to be 
the mapping cone of $\Phi\vert_{A^{p,\bullet}}$. The product of the 
horizontal differentials defines a map of complexes between columns, and 
thus a double complex, if and only if $\Phi$ is a map of double complexes. 
$C^{\bullet,\bullet}(\Phi)$ fits in an exact sequence of double complexes 
analogous to (\ref{ExactConeSeq}); therefore, the conclusion of 
Proposition~\ref{ConeCoh} holds for its total cohomology $H_{tot}(\Phi)$ 
and the total cohomologies of $A$ and $B$. 

Next, consider a map 
$\xymatrix{\Phi:C_\nabla(\G,\phi) \ar[r] & C_\nabla(\gg_1,\phi)}$ that 
respects the triple grading of the complexes (\ref{Cxs}). We define the 
\textit{mapping cone triple} 
$C^{p,q}_r(\Phi)=C^{p,q+1}_r(\G,\phi)\oplus C^{p,q}_r(\gg_1,\phi)$ by 
setting, for constant $p$, the $p$-page to be the mapping cone double of 
the map $\Phi_p$ (\ref{Phi_p}) and formally summing the remaining 
differentials and difference maps. The total differential $\nabla_\Phi$ 
squares to zero if and only if $\Phi$ is a map of complexes. 
$C^{\bullet,\bullet}_{\bullet}(\Phi)$ fits in an exact sequence analogous 
to (\ref{ExactConeSeq}); therefore, the conclusion of 
Proposition~\ref{ConeCoh} holds for its total cohomology $H_{\nabla}(\Phi)$ 
and the cohomologies of $\G$ and $\gg_1$. 

We point out that the cohomology of the mapping cone triple of $\Phi$ 
tautologically coincides with the total cohomology of 
\begin{align}\label{Cllpsed}
\xymatrix{
\vdots & \vdots & \vdots & \vdots \\ 
C_{tot}^3(\Phi_0) \ar[r]\ar[u]\ar@{-->}[rrd]\ar@{.>}[rrrdd] & C_{tot}^3(\Phi_1) \ar[r]\ar[u]\ar@{-->}[rrd]\ar@{.>}[rrrdd] & C_{tot}^3(\Phi_2) \ar[r]\ar[u]\ar@{-->}[rrd] & C_{tot}^3(\Phi_3) \ar[r]\ar[u] & \dots \\
C_{tot}^2(\Phi_0) \ar[r]\ar[u]\ar@{-->}[rrd] & C_{tot}^2(\Phi_1) \ar[r]\ar[u]\ar@{-->}[rrd] & C_{tot}^2(\Phi_2) \ar[r]\ar[u]\ar@{-->}[rrd] & C_{tot}^2(\Phi_3) \ar[r]\ar[u] & \dots \\
C_{tot}^1(\Phi_0) \ar[r]\ar[u] & C_{tot}^1(\Phi_1) \ar[r]\ar[u] & C_{tot}^1(\Phi_2) \ar[r]\ar[u] & C_{tot}^1(\Phi_3) \ar[r]\ar[u] & \dots \\
C_{tot}^0(\Phi_0) \ar[r]\ar[u] & C_{tot}^0(\Phi_1) \ar[r]\ar[u] & C_{tot}^0(\Phi_2) \ar[r]\ar[u] & C_{tot}^0(\Phi_3) \ar[r]\ar[u] & \dots ,
}
\end{align}
each of whose columns is the total complex of the mapping cone double of 
$\Phi_p$ (\ref{Phi_p}) and where the maps 
$\xymatrix{C_{tot}^q(\Phi_p) \ar[r] & C_{tot}^{q+1-a}(\Phi_{p+a})}$ - 
exemplified respectively for $a\in\lbrace 1,2,3\rbrace$ by the horizontal, 
dashed and pointed arrows in (\ref{Cllpsed})- correspond to the sum 
$\sum_{b=0}^k\Delta_{a,b}$. One may filter (\ref{Cllpsed}) by columns 
giving rise to a spectral sequence whose first page is 
$E^{p,q}_1=H_{tot}^q(\Phi_p)$.

We close this section with the following lemma, which is much used below. 
It follows from a simple application of spectral sequences (see, 
\textit{e.g.}, \cite{Weibel:1994}). 

\begin{lemma}\label{BelowDiag}
Let $(E^{p,q}_r,d_r)$ be the spectral sequence of a double graded object 
$C^{\bullet,\bullet}$ that can be filtrated by columns. If there is a page 
for which $E^{p,q}_r$ is zero for all $(p,q)$ satisfying $p+q\leq k$, then 
\begin{eqnarray*}
H_{tot}^n(C)=(0)\quad\textnormal{for }n\leq k.
\end{eqnarray*}
\end{lemma}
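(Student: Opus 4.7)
The plan is to run a standard spectral-sequence convergence argument. Assuming the filtration by columns is bounded below in each total degree (which is the case in all the applications above, since the lattices of interest live in the first octant), the spectral sequence $(E^{p,q}_r,d_r)$ converges to the total cohomology $H_{tot}^\bullet(C)$, in the sense that $H_{tot}^n(C)$ carries a filtration $F^\bullet$ whose associated graded is
\begin{equation*}
\bigoplus_{p+q=n}E_\infty^{p,q}\cong\bigoplus_{p+q=n}F^pH_{tot}^n(C)/F^{p+1}H_{tot}^n(C).
\end{equation*}
So it suffices to show that $E_\infty^{p,q}=0$ whenever $p+q\le k$.

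First, I would observe that if $E_r^{p,q}=0$ for all $(p,q)$ with $p+q\le k$ on some page $r$, then the same vanishing persists on every subsequent page. Indeed, each $E_{r+1}^{p,q}$ is a subquotient of $E_r^{p,q}$ (the cohomology of $d_r$ at that spot), so it inherits the vanishing, and by a trivial induction $E_s^{p,q}=0$ for all $s\ge r$ and all $(p,q)$ in the triangle $p+q\le k$. Passing to the limit, $E_\infty^{p,q}=0$ throughout that range.

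Combining this with convergence, every associated graded piece of $H_{tot}^n(C)$ is zero for $n\le k$, and therefore $H_{tot}^n(C)=0$ for $n\le k$, as claimed.

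Strictly speaking there is no real obstacle here: the only subtlety is to make sure the convergence of the filtration by columns is in force, so that the spectral sequence actually computes $H_{tot}$. In the applications below, $C^{\bullet,\bullet}$ is concentrated in non-negative bidegrees, in which case the filtration is bounded in each total degree and convergence is automatic; this is the only input used beyond the purely formal step above.
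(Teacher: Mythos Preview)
Your proof is correct and is exactly the standard spectral-sequence argument one would expect here. The paper itself does not supply a proof of this lemma, deferring instead to a reference (Weibel), so there is nothing to compare beyond noting that your write-up is precisely the ``simple application of spectral sequences'' the paper has in mind.
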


\section{The 2-van Est Map}\label{sec-TheMap}
In this section, we define the van Est map and prove that it defines a map 
of complexes. Throughout, let $\G$ be a Lie $2$-group with Lie $2$-algebra 
$\gg_1$ and let $\rho$ be a representation on $\xymatrix{W \ar[r]^\phi & V}$. 
Define the van Est map
\begin{align}\label{vanEstMap}
\Phi & :\xymatrix{
C^{p,q}_r(\G,\phi) \ar[r] & C^{p,q}_r(\gg_1,\phi),
} & (\Phi\omega)(\xi_1,...,\xi_q ;z_1,...,z_r) & :=\sum_{\sigma\in S_q}\sum_{\varrho\in S_r}\abs{\sigma}\abs{\varrho}\overrightarrow{R}_{\sigma(\Xi)}\overrightarrow{R}_{\varrho(Z)}\omega,
\end{align}
where $\Xi=(\xi_1,...,\xi_q)\in\gg_p^q$, $Z=(z_1,...,z_r)\in\gg^r$, $\abs{\cdot}$ stands 
for the sign of the permutation, and 
\begin{align*}
(\overrightarrow{R}_{\varrho(Z)}\omega)(\vec{\gamma}) & :=\frac{d}{d\tau_r}\rest{\tau_r=0}\cdots\frac{d}{d\tau_1}\rest{\tau_1=0}\omega(\vec{\gamma};\exp_G(\tau_1 z_{\varrho(1)}),...,\exp_G(\tau_r z_{\varrho(r)})), & \textnormal{for } & \vec{\gamma}\in\G_p^q; \\
\overrightarrow{R}_{\sigma(\Xi)}\overrightarrow{R}_{\varrho(Z)}\omega & =\frac{d}{d\lambda_q}\rest{\lambda_q=0}\cdots\frac{d}{d\lambda_1}\rest{\lambda_1=0}(\overrightarrow{R}_{\varrho(Z)}\omega)(\exp_{\G_p}(\lambda_1\xi_{\sigma(1)}),...,\exp_{\G_p}(\lambda_q\xi_{\sigma(q)})).
\end{align*}
Regarding each of these $\overrightarrow{R}$ operators as compositions of 
independent $R_\bullet$ operators that lower degree by differentiating a  
single entry in the direction of the right-invariant vector field of 
$\bullet$ and evaluating at the identity, it is clear that 
$\Phi\omega\in C^{p,q}_r(\gg_1,\phi)$ and $\Phi$ is thus well-defined. 
Observe that, in the page $r=0$, the formula (\ref{vanEstMap}) coincides 
with the classic van Est map sending $\G_p$ cochains to $\gg_p$ cochains;  
analogously, in the page $q=0$, (\ref{vanEstMap}) coincides with the 
classic van Est map for the Lie group $G$.
 
We show that, under the correspondences of Theorems~\ref{H2Alg} and 
\ref{H2Gp}, $\Phi$ is the map that takes in a Lie $2$-group extension 
(or an isomorphism class thereof) and returns its Lie $2$-algebra. 
Let 
$\vec{\omega}=(\varphi,\omega_0,\alpha,\omega_1)\in C^{1,1}_0(\G,\phi)\oplus C^{0,2}_0(\G,\phi)\oplus C^{0,1}_1(\G,\phi)\oplus C^{0,0}_2(\G,\phi)$. 
In the course of proving Theorem~\ref{H2Gp}, one learns that 
$\vec{\omega}$ is a $2$-cocycle if and only if 
\begin{align}\label{CocGpExt}
\xymatrix{G{}_{\rho^1_0\circ i}\ltimes^{\omega_1}W \ar[r] & H{}_{\rho^0_0}\ltimes^{\omega_0}V:(g,w) \ar@{|->}[r] & (i(g),\phi(w)+\varphi{\begin{pmatrix}
g \\
1
\end{pmatrix}})}
\end{align}
with action 
\begin{align}\label{AlphaAction}
(g,w)^{(h,v)} & =(g^h,\rho_0^1(h)^{-1}(w+\rho_1(g)v)+\alpha(h;g)) & \textnormal{for }(g,w)\in G\times W, (h,v)\in H\times V,
\end{align}
is a crossed module. Here, we use the isomorphism of 
Subsection~\ref{sss-Equiv} to cast $\varphi$ as a function of $G\rtimes H$. 
Also, $X{}_{\rho}\ltimes^{\omega}A$ stands for the twisted semi-direct product 
with respect to the representation $\rho$ of $X$ on $A$ defined by 
$\omega\in C_{Gp}^2(X,A)$, which is a Lie group if and 
only if $\omega$ is a Lie group $2$-cocycle. 

If $\vec{\omega}$ is a $2$-cocycle, in particular, so are $\omega_0$ and 
$\omega_1$. Hence, $\Phi\omega_0$ and $\Phi\omega_1$ are Lie algebra 
$2$-cocycles respectively defining the Lie algebras of 
$H{}_{\rho^0_0}\ltimes^{\omega_0}V$ and 
$G{}_{\rho^1_0\circ i}\ltimes^{\omega_1}W$ as the twisted semi-direct sums 
$\hh_{\dot{\rho}^0_0}\oplus^{\Phi\omega_0}V$ and 
$\gg{}_{\dot{\rho}^1_0\circ\mu}\oplus^{\Phi\omega_1}W$. Differentiating 
(\ref{CocGpExt}) at the identity yields
\begin{align}\label{CocAlgExt}
\xymatrix{\gg{}_{\dot{\rho}^1_0\circ\mu}\oplus^{\Phi\omega_1}W \ar[r] & \hh{}_{\dot{\rho}^0_0}\oplus^{\Phi\omega_0}V:(x,w) \ar@{|->}[r] & (\mu(x),\phi(w)+d_{(1,1)}\varphi{\begin{pmatrix}
x \\
0
\end{pmatrix}})}, 
\end{align}
and, clearly, $d_{(1,1)}\varphi=\Phi\varphi$. As for the action, each 
$(h,v)\in H\times V$ defines a Lie group automorphism 
\begin{align}\label{(h,v)-act}
\xymatrix{(-)^{(h,v)}:G{}_{\rho^1_0\circ i}\ltimes^{\omega_1}W \ar[r] & G{}_{\rho^1_0\circ i}\ltimes^{\omega_1}W}.
\end{align}
Differentiating (\ref{(h,v)-act}) at the identity yields a Lie algebra 
automorphism, for which we use the same notation
\begin{align*}
\xymatrix{(-)^{(h,v)}:\gg{}_{\dot{\rho}^1_0\circ\mu}\oplus^{\Phi\omega_1}W \ar[r] & \gg{}_{\dot{\rho}^1_0\circ\mu}\oplus^{\Phi\omega_1}W}.
\end{align*}
Explicitly, for $(x,w)\in\gg\oplus W$,
\begin{align*}
(x,w)^{(h,v)}& =(x^h,\rho_0^1(h)^{-1}(w+\dot{\rho}_1(x)v)+\frac{d}{d\tau}\rest{\tau=0}\alpha(h;\exp_G(\tau x))),
\end{align*}
where we also abuse notation and write $x^h$ for the induced action of $H$ 
on $\gg$. By definition, the structural action of the Lie $2$-algebra of 
(\ref{CocGpExt}) is the differential at the identity of 
the group homomorphism
\begin{align*}
\xymatrix{H{}_{\rho^0_0}\ltimes^{\omega_0}V \ar[r] & Aut(\gg{}_{\dot{\rho}^1_0\circ\mu}\oplus^{\Phi\omega_1}W):(h,v) \ar@{|->}[r] & (-)^{(h,v)^{-1}}}.
\end{align*}
Thus, the action by derivations of $\hh_{\dot{\rho}^0_0}\oplus^{\Phi\omega_0}V$ 
on $\gg{}_{\dot{\rho}^1_0\circ\mu}\oplus^{\Phi\omega_1}W$ is given by
\begin{align*}
\Lie_{(y,v)}(x,w)=(\Lie_y x,\dot{\rho}_0^1(y)w-\dot{\rho}_1(x)v+\frac{d}{d\lambda}\rest{\lambda=0}\frac{d}{d\tau}\rest{\tau=0}\alpha(\exp_H(\lambda y)^{-1};\exp_G(\tau x)))
\end{align*}
for $(y,v)\in\hh\oplus V$ and $(x,w)\in\gg\oplus W$. Together with this 
action, (\ref{CocAlgExt}) is naturally seen as an extension of $\gg_1$ by 
$\xymatrix{W \ar[r]^\phi & V}$ whose classifying $2$-cocycle under  
Theorem~\ref{H2Alg} is 
$(\Phi\varphi,\Phi\omega_0,\Phi\alpha,\Phi\omega_1)\in C_\nabla^2(\gg_1,\phi)$.

In the remainder of this section, we prove that $\Phi$ commutes with all  
differentials and difference maps defining $\nabla$, thus defining a map 
of complexes. To illustrate the van Est strategy, we consider two cases 
separately: First, we deal with the case where the representation takes 
values on an honest vector space; then, we move on to the general case.

\subsection{Vector space coefficients}
One way of regarding the category of vector spaces as a subcategory of 
$2$-vector spaces is realizing a vector space $V$ as the groupoid that has 
$V$ as its space of objects and single arrow attached to each object, the 
$2$-term complex of which is $\xymatrix{(0) \ar[r] & V}$. Having $W=(0)$ 
has the effect of collapsing (\ref{Alg3dimLat}) and (\ref{Gp3dimLat}) to 
the page $r=0$. Though in general $\partial$ and $\delta$ do not commute, 
they do so up to isomorphism in the $2$-vector space when $r=0$. Since two 
elements in a vector space are isomorphic if and only if they are the same, 
$\partial$ and $\delta$ commute and form double complexes. We prove that, 
in this case, $\Phi$ yields a map of double complexes. In the generic cube,
\begin{align}\label{r=0Cube}
\xymatrix@!0{
& & C(\G_{p}^{q+1},V) \ar[rrrrr]^\Phi\ar[ddll]_\partial & & & & & \bigwedge^{q+1}\gg_{p}^*\otimes V \ar[ddll]_\partial  \\
 \\
C(\G_{p+1}^{q+1},V) \ar[rrrrr]^\Phi & & & & & \bigwedge^{q+1}\gg _{p+1}^*\otimes V  \\
& & C(\G_{p}^{q},V) \ar'[rrr]^\Phi[rrrrr]\ar'[u][uuu]^\delta\ar[ddll]_\partial & & & & & \bigwedge^{q}\gg_{p}^*\otimes V \ar[uuu]_\delta\ar[ddll]^\partial \\
\\
C(\G_{p+1}^{q},V) \ar[rrrrr]^\Phi  \ar[uuu]^\delta & & & & & \bigwedge^{q}\gg_{p+1}^*\otimes V  \ar[uuu]^\delta
}
\end{align}
the left and right squares commute because they lie in their corresponding 
double complexes. Due to the observation that (\ref{vanEstMap}) coincides 
with the classic Van Est map, the front and back squares commute as well. We 
are left to prove is the following lemma.
\begin{lemma}
In (\ref{r=0Cube}), $\Phi\circ\partial=\partial\circ\Phi$.
\end{lemma}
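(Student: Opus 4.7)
The plan is to reduce the lemma to the naturality of the classical van Est map under Lie group homomorphisms. First, I will observe that when $r=0$ the formula (\ref{vanEstMap}) collapses---the sum over $S_r$ degenerates to the empty permutation and the $\overrightarrow{R}_Z$ factor disappears---into an alternation over $S_q$ of the classical van Est operator for the Lie group $\G_p^q$ with values in the trivial $V$-module. Likewise, since the $p$-direction representation with $r=0$ and $q>0$ is trivial on $V$, the differential $\partial$ reduces to the plain alternating sum $\sum_{k=0}^{p+1}(-1)^k\partial_k^*$ of pullbacks along the simplicial face maps $\partial_k:\G_{p+1}^q\to\G_p^q$ of the product groupoid $\G^q\rightrightarrows H^q$.

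Next, I will verify that each face map $\partial_k:\G_{p+1}\to\G_p$ is a Lie group homomorphism. For $k=0$ and $k=p+1$ this is immediate, as $\partial_k$ simply forgets one coordinate under the identification $\G_{p+1}\cong G^{p+1}\times H$; for interior $0<k\leq p$ the map $\partial_k$ is built from the groupoid multiplication $\Join$, which is a Lie group homomorphism by the very definition of a strict Lie $2$-group (this is precisely what is invoked in the text to conclude that the iterated target $t_p$ is a group homomorphism). Cartesian products preserve the homomorphism property, so each $\partial_k:\G_{p+1}^q\to\G_p^q$ is a Lie group homomorphism whose differential is the corresponding simplicial face map $\hat\partial_k:\gg_{p+1}^q\to\gg_p^q$.

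The main---and only genuine---step is then the naturality of the classical van Est map: for any Lie group homomorphism $f:G_1\to G_2$ with differential $\hat f$, the identity $f\circ\exp_{G_1}=\exp_{G_2}\circ\hat f$ inserted into (\ref{vanEstMap}) at once yields $\Phi(f^*\omega)=\hat f^*(\Phi\omega)$ whenever $V$ carries the trivial action, as the iterated derivatives $\overrightarrow{R}$ can be pushed through $f$ summand by summand. Applying this with $f=\partial_k$ and summing with signs gives $\Phi\circ\partial=\partial\circ\Phi$. I do not anticipate any serious obstacle: the interplay between exponentials and group homomorphisms is standard, and the trivial nature of the $V$-action removes all bookkeeping that would otherwise be needed when equivariant pullbacks are involved; the real work has already been absorbed into the observation that every face map of the nerve of a strict Lie $2$-group is itself a Lie group homomorphism.
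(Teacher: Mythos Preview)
Your proposal is correct and follows essentially the same argument as the paper: both rest on the identity $\partial_k\circ\exp_{\G_{p+1}}=\exp_{\G_p}\circ\hat\partial_k$ (naturality of the exponential under the Lie group homomorphisms $\partial_k$), then push the $\overrightarrow{R}$ operators through term by term and take the alternating sum over $k$ and over $S_q$. One small slip in phrasing: the relevant van Est map is that of the Lie group $\G_p$ (applied at cochain degree $q$), not of the Lie group $\G_p^q$; your subsequent discussion of the face maps $\partial_k:\G_{p+1}\to\G_p$ shows you have the right picture regardless.
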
 
\begin{proof}
Each face map $\partial_k$ in a Lie $2$-group is a homomorphism whose 
derivative is the face map $\hat{\partial}_k$ in its Lie $2$-algebra; 
therefore, if $\xi\in\gg_{p+1}$, 
\begin{align}\label{expFaces}
\partial_k(\exp_{\G_{p+1}}(\xi))=\exp_{\G_{p}}(\hat{\partial}_k\xi).
\end{align}
Let $\omega\in C(\G_p^q,V)$ and $\Xi=(\xi_1,...,\xi_q)\in\gg_{p+1}^q$. 
Then
\begin{align*}
\overrightarrow{R}_\Xi(\partial\omega) & =\frac{d}{d\lambda_q}\rest{\lambda_q=0}...\frac{d}{d\lambda_1}\rest{\lambda_1=0}\sum_{k=0}^{p+1}(-1)^k\omega(\partial_k\exp_{\G_{p+1}}(\lambda_1\xi_{1}),...,\partial_k\exp_{\G_{p+1}}(\lambda_q\xi_{q)})) \\
									   & =\sum_{k=0}^{p+1}(-1)^k\frac{d}{d\lambda_q}\rest{\lambda_q=0}...\frac{d}{d\lambda_1}\rest{\lambda_1=0}\omega(\exp_{\G_{p}}(\lambda_1\hat{\partial}_k\xi_1),...,\exp_{\G_{p}}(\lambda_q\hat{\partial}_k\xi_q)) 
									     =\sum_{k=0}^{p+1}(-1)^k\overrightarrow{R}_{\partial_k\Xi}\omega ,
\end{align*}
and
\begin{align*}
\Phi(\partial\omega)(\Xi) & =\sum_{\sigma\in S_q}\abs{\sigma}\overrightarrow{R}_{\sigma(\Xi)}(\partial\omega) 
						   =\sum_{\sigma\in S_q}\abs{\sigma}\sum_{k=0}^{p+1}(-1)^k\overrightarrow{R}_{\partial_k\sigma(\Xi)}\omega 
						   =\sum_{k=0}^{p+1}(-1)^k(\Phi\omega)(\partial_k\Xi)=\partial(\Phi\omega)(\Xi).
\end{align*}
\end{proof}

In this case, we have the following van Est type theorem.

\begin{theorem}\label{2vE-vs} 
Let $\G$ be a Lie $2$-group with associated crossed module 
$\xymatrix{G \ar[r] & H}$, Lie $2$-algebra $\gg_1$ and a representation on 
the $2$-vector space $\xymatrix{(0) \ar[r] & V}$. If $H$ is $k$-connected, 
$G$ is $(k-1)$-connected and $\Phi$ is the van Est map (\ref{vanEstMap}), 
then the total cohomology of its mapping cone double vanishes: 
\begin{eqnarray*}
H^n_{tot}(\Phi)=(0),\quad\textnormal{for all degrees } n\leq k.
\end{eqnarray*}
\end{theorem}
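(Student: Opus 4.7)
The plan is to apply the column-filtration spectral sequence argument sketched in the Introduction. Since $\Phi$ has just been established to be a map of double complexes when $r=0$, the mapping cone double $C^{\bullet,\bullet}(\Phi)$ is a bona fide double complex, whose $p$-th column is by construction the mapping cone of the restriction
\begin{align*}
\Phi_p : C(\G_p^\bullet,V) \longrightarrow \bigwedge{}^\bullet \gg_p^*\otimes V.
\end{align*}
As noted right after the definition of $\Phi$, this restriction is precisely the classical van Est map for the Lie group $\G_p$ with values in $V$ regarded as a $\G_p$-module via $\rho_0^0\circ t_p$.

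The first step is to pin down the connectivity of each $\G_p$. Using the canonical isomorphism $\G_p\cong G^p\times H$ from Subsection~\ref{sss-Equiv}, $\G_0=H$ is $k$-connected by hypothesis; for $p\geq 1$, the product $G^p\times H$ of the $(k-1)$-connected group $G^p$ with the $k$-connected group $H$ is $(k-1)$-connected, since homotopy groups of a product split. Viewing each $\G_p$ as a Lie groupoid over a point, Theorem~\ref{Crainic-vanEstRephrased} applied column by column yields
\begin{align*}
H^q(\Phi_0)=(0)\ \text{for } q\leq k, & \qquad H^q(\Phi_p)=(0)\ \text{for } q\leq k-1\ \text{when } p\geq 1.
\end{align*}

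Next, I would filter $C^{\bullet,\bullet}(\Phi)$ by columns; the first page of the resulting spectral sequence is $E_1^{p,q}=H^q(\Phi_p)$. The vanishing above gives $E_1^{p,q}=0$ whenever $p+q\leq k$: the case $p=0$ is the bound $q\leq k$, while for $p\geq 1$ we have $q\leq k-p\leq k-1$, which is covered. An appeal to Lemma~\ref{BelowDiag} then produces the desired conclusion $H_{tot}^n(\Phi)=(0)$ for all $n\leq k$.

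The main obstacle is really bookkeeping rather than conceptual. One has to verify carefully that the restriction of the global van Est formula (\ref{vanEstMap}) to the $p$-th column coincides, coefficients and actions included, with the classical van Est map for the Lie group $\G_p$, so that Theorem~\ref{Crainic-vanEstRephrased} genuinely applies. This is essentially guaranteed by the remark made after (\ref{vanEstMap}), but it is worth double-checking, in particular, that for the $p=0$ column the coefficients reduce to the trivial bundle $H^q\times V$ with representation $\rho_0^0$, and that for $p\geq 1$ the action is indeed $\rho_0^0\circ t_p$, so that the classical van Est theorem delivers the connectivity bounds used above.
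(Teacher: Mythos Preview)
Your proposal is correct and follows essentially the same route as the paper: filter the mapping cone double by columns, identify the $p$-th column with the mapping cone of the classical van Est map for the Lie group $\G_p\cong G^p\times H$, invoke the van Est theorem columnwise, and conclude via Lemma~\ref{BelowDiag}. If anything, you are slightly more careful than the paper's own write-up in separating the $p=0$ column (where $\G_0=H$ is $k$-connected, so $H^q(\Phi_0)=0$ for $q\leq k$) from the columns $p\geq 1$ (where $\G_p$ is only $(k-1)$-connected); this distinction is precisely what is needed to cover the corner $(p,q)=(0,k)$ and reach total degree $k$, whereas the paper's phrasing ``all columns vanish below $k-1$'' leaves that point implicit.
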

\begin{proof}
The first page of the spectral sequence of the filtration by columns of 
$C^{\bullet,\bullet}(\Phi)$ is (\ref{1stPage}). Recall that 
$\G_p\cong G^p\times H$; hence, using the K\"unneth formula and the 
connectedness hypotheses for $H$ and $G$, it follows that $\G_p$ is 
$(k-1)$-connected. Theorem~\ref{vanEst} implies all columns in $E^{p,q}_1$ 
vanish below $k-1$, and so the result follows from Lemma~\ref{BelowDiag}.

\end{proof}

Rephrasing with Proposition~\ref{ConeCoh}:

\begin{corollary}\label{vanEstVs}
Under the hypotheses of Theorem~\ref{2vE-vs}, the van Est map 
(\ref{vanEstMap}) induces isomorphisms
\begin{eqnarray*}
\xymatrix{
\Phi :H_\nabla^n(\G ,V) \ar[r] & H_\nabla^n(\gg_1 ,V),
}
\end{eqnarray*}
for $n\leq k$, and it is injective for $n=k+1$.
\end{corollary}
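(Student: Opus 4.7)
The plan is to deduce Corollary~\ref{vanEstVs} as a formal consequence of Theorem~\ref{2vE-vs} and Proposition~\ref{ConeCoh}. First I would identify the cohomologies appearing in the two statements. Since the coefficients are the $2$-vector space $\xymatrix{(0) \ar[r] & V}$, every lattice $C^{p,q}_r$ with $r>0$ vanishes, and the triple-graded complexes $C_\nabla(\G,V)$ and $C_\nabla(\gg_1,V)$ collapse to the total complexes of honest double complexes supported entirely on the page $r=0$ (exactly the double complexes displayed in the Introduction). For the same reason, the mapping cone triple of $\Phi$ reduces to the mapping cone double $C^{\bullet,\bullet}(\Phi)$, so that $H^n_\nabla(\Phi)$ coincides tautologically with $H^n_{tot}(\Phi)$.

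With that identification in hand, Theorem~\ref{2vE-vs} gives $H^n_{tot}(\Phi)=(0)$, and therefore $H^n_\nabla(\Phi)=(0)$, for all $n\le k$. The final step is to feed this vanishing into Proposition~\ref{ConeCoh} applied to the map of complexes $\xymatrix{\Phi:C_\nabla^\bullet(\G,V) \ar[r] & C_\nabla^\bullet(\gg_1,V)}$: the equivalence (i)$\Leftrightarrow$(ii) there converts the vanishing of the cone cohomology in degrees $\le k$ into the desired isomorphisms in those degrees together with injectivity at degree $k+1$, which is exactly the content of the corollary.

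I do not anticipate any real obstacle. The only hypothesis of Proposition~\ref{ConeCoh} that needs to be verified is that $\Phi$ be a map of complexes, which was established in Section~\ref{sec-TheMap} (and tacitly used already in the statement of Theorem~\ref{2vE-vs}); everything else is bookkeeping about the collapse of the triple grading to a double grading in the $W=(0)$ case.
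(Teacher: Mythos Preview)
Your proposal is correct and follows exactly the paper's approach: the paper simply states the corollary as ``Rephrasing with Proposition~\ref{ConeCoh}'', and you have spelled out precisely what that rephrasing entails, including the (accurate) observation that in the $W=(0)$ case the mapping cone triple collapses to the mapping cone double so that $H^n_\nabla(\Phi)=H^n_{tot}(\Phi)$.
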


As an application, we prove the following partial integrability result.

\begin{theorem}
Let $\gg_1$ be a Lie $2$-algebra with associated crossed module 
$\xymatrix{\gg \ar[r]^\mu & \hh}$. If 
\begin{align}\label{hyp}
\gg\cap\mathfrak{c}(\hat{u}(\hh)) & =(0),
\end{align}
where $\mathfrak{c}(\hat{u}(\hh))$ is the centralizer of $\hat{u}(\hh)$ in 
$\gg_1$, then $\gg_1$ is integrable. 
\end{theorem}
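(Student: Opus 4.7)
The plan is to run the van Est strategy of the Introduction on the adjoint representation of Example~\ref{2ad}. Since only vector space coefficients will be needed, the hypothesis $\gg\cap\mathfrak{c}(\hat{u}(\hh))=(0)$ is tailored to ensure that the adjoint kernel is a $2$-vector space concentrated in degree zero, placing us within the range of Corollary~\ref{vanEstVs}.

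First, I would identify the kernel of $\ad\colon\gg_1\to\ggl(\mu)$ as a morphism of crossed modules. By~(\ref{AlgAction}), $\Lie_u x=[\hat{u}(u),x]_1$, so $\ad_1(x)=0$ means $\Lie_u x=0$ for all $u\in\hh$; that is, $x\in\gg\cap\mathfrak{c}(\hat{u}(\hh))$. The hypothesis thus forces $\ad_1$ to be injective. The kernel $K\leq\hh$ of $\ad_0$ is automatically abelian, because $\ad_0^0(y)=[y,\cdot]$ vanishes on $K$; consequently, the kernel of $\ad$ is the abelian crossed module $\xymatrix{(0) \ar[r] & K}$, which is precisely a $2$-vector space of the form covered by Corollary~\ref{vanEstVs}. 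This yields an adjoint extension
\begin{eqnarray*}
\xymatrix{
0 \ar[r] & \bigl((0)\to K\bigr) \ar[r] & \gg_1 \ar[r]^{\ad\quad} & \ad(\gg_1) \ar[r] & 0,
}
\end{eqnarray*}
which, by Theorem~\ref{H2Alg}, determines a class $[\omega_{\gg_1}]\in H_\nabla^2(\ad(\gg_1),K)$.

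Next, since $\ad(\gg_1)$ is a Lie sub-$2$-algebra of $\ggl(\mu)$, it is a linear Lie $2$-algebra and, by \cite{Sheng_Zhu2:2012}, admits a Lie $2$-group integration $\mathcal{H}$ whose crossed module may be chosen so that both pieces are $1$-connected, hence $2$-connected. Corollary~\ref{vanEstVs} then applies with $k=2$ and supplies an isomorphism
\begin{eqnarray*}
\xymatrix{
\Phi\colon H_\nabla^2(\mathcal{H},K) \ar[r]^-{\sim} & H_\nabla^2(\ad(\gg_1),K).
}
\end{eqnarray*}
Pulling $[\omega_{\gg_1}]$ back and invoking Theorem~\ref{H2Gp}, the resulting class $[\int\omega_{\gg_1}]$ corresponds to a Lie $2$-group extension
\begin{eqnarray*}
\xymatrix{
1 \ar[r] & \bigl((0)\to K\bigr) \ar[r] & \G \ar[r] & \mathcal{H} \ar[r] & 1.
}
\end{eqnarray*}

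The main step to verify is that $\G$ genuinely integrates $\gg_1$. This amounts to checking that the cochain-level van Est map~(\ref{vanEstMap}) realises, at the level of extension $2$-cocycles, the operation of differentiating a Lie $2$-group extension into its Lie $2$-algebra extension, as sketched in the paragraphs immediately following~(\ref{vanEstMap}). Once this identification is in place, differentiating $\G\to\mathcal{H}$ produces a Lie $2$-algebra extension of $\ad(\gg_1)$ by $(0)\to K$ whose class equals $[\omega_{\gg_1}]$, and Theorem~\ref{H2Alg} identifies it with $\gg_1$; therefore the Lie $2$-algebra of $\G$ is $\gg_1$, as desired.
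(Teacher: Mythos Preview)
Your proposal is correct and follows essentially the same route as the paper: you use the hypothesis to show $\ad_1$ is injective, reduce the adjoint kernel to the $2$-vector space $(0)\to\ker(\ad_0)$, integrate the linear image $\ad(\gg_1)$ via \cite{Sheng_Zhu2:2012} with $1$-connected (hence $2$-connected) pieces, and invoke Corollary~\ref{vanEstVs} together with Theorems~\ref{H2Alg} and~\ref{H2Gp}. The only difference is cosmetic: you spell out why $K=\ker(\ad_0)$ is abelian and why $\Phi$ carries differentiation of extensions, whereas the paper leaves these implicit.
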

\begin{proof}
We are to use the van Est strategy. Consider the exact sequence 
\begin{eqnarray}\label{vE2-Ext}
\xymatrix{
0 \ar[r] & \ker(\ad_1) \ar[d]\ar@{^{(}->}[r] & \gg \ar[d]_\mu\ar[r]^{\ad_1\quad} & \ad_1(\gg) \ar[d]\ar[r] & 0  \\
0 \ar[r] & \ker(\ad_0) \ar@{^{(}->}[r]       & \hh \ar[r]_{\ad_0\quad}           & \ad_0(\hh) \ar[r]       & 0, 
}
\end{eqnarray}
associated to the adjoint representation of Example~\ref{2ad}. If 
$x\in\gg$, from Eq.~(\ref{hyp}), there exists a $y_x\in\hh$ such that 
$[x,\hat{u}(y_x)]_1=\ad_1(x)(y_x)\neq 0$. Consequently, 
$\ad_1(x)\neq 0\in Hom(\hh,\gg)$ for every $x\in\gg$, and $\ker(\ad_1)=(0)$. 
Let $[\omega]\in H_{\nabla}^2(\gg_1,\ker(\ad_0))$ be the class corresponding 
to (\ref{vE2-Ext}) under Theorem~\ref{H2Alg}. 
 
The image of any linear functor between $2$-vector spaces yields a Lie 
subgroupoid, and Lie $2$-subalgebras of $\ggl(\phi)$ can be integrated using 
exponentials \cite{Sheng_Zhu2:2012}; hence, 
$\xymatrix{\ad_1(\gg) \ar[r] & \ad_0(\hh)}$ is integrable to a Lie $2$-group 
$\G$ with associated crossed module $\xymatrix{G \ar[r] & H}$. Picking $G$ 
and $H$ $1$-connected, we may use Corollary~\ref{vanEstVs} to conclude  
\begin{align*}
[\omega] & =\Phi[\smallint\omega], & \textnormal{for a unique } & [\smallint\omega]\in H_{\nabla}^2(\G ,\ker(\ad_0)).
\end{align*}
The extension of $\G$ by $\ker(\ad_0)$ that corresponds to 
$[\smallint\omega]$ under Theorem~\ref{H2Gp} integrates $\gg_1$.

\end{proof}
\begin{remark}
One could cast Eq.~(\ref{hyp}) in terms of the isotropy Lie algebras of the 
action of $\hh$ on $\gg$ by asking equivalently that 
$\dim\lbrace y\in\hh :\Lie_y x=0\rbrace>0$ for all $x\in\gg$.
\end{remark}

\subsection{The general case}
We devote the remainder of this section to prove that, when values are taken 
on a $2$-vector space $\xymatrix{W \ar[r]^\phi & V}$ with $W\neq (0)$, the 
van Est map $\Phi$ (\ref{vanEstMap}) still defines a map of complexes. In 
particular, we show that $\Phi$ commutes with all differentials in 
Subsection~\ref{sss-Cxs} and all difference maps necessary to define the 
complexes (\ref{Cxs}) up to degree $2$. The proofs we present boil down to 
long and unfortunately unenlightening computations. 

For notational convenience, we adopt the following shorthands. For an index 
set $I=\lbrace 1,...,n\rbrace$, 
\begin{align*}
\frac{d^I}{d\tau_I}\rest{\tau_I=0}:=\frac{d}{d\tau_n}\rest{\tau_n=0}\cdots\frac{d}{d\tau_1}\rest{\tau_1=0}.
\end{align*}
On the other hand, for any Lie algebra $\gg$, if $X=(x_1,...,x_n)\in\gg^n$, 
we define
\begin{align*}
\exp(\tau_I\cdot X):=(\exp_G(\tau_1x_1),...,\exp_G(\tau_nx_n))\in G^n,
\end{align*}
where $G$ is a Lie group integrating $\gg$.

We also use the following partitions of the symmetric group $S_n$. 
First,
\begin{align*}
S_n=\bigcup_{a=1}^nS_{n-1}(m\vert a),
\end{align*} 			
where $S_{n-1}(m\vert a)$ is the set of permutations that fix the $m$th 
element to be $a$; in symbols, 
$S_{n-1}(m\vert a):=\lbrace\sigma\in S_n:\sigma(m)=a\rbrace$.
Each element $\sigma\in S_{n-1}(m\vert a)$ can be factored as 
$\sigma=\sigma'\circ\sigma^m_a$ where,
\begin{align*}
\sigma^m_a(j):=\begin{cases} 
a & \textnormal{if }j=m \\
j-1 & \textnormal{if }m<j\leq a \\
j+1 & \textnormal{if }a\leq j<m  \\
j & \textnormal{otherwise.}\end{cases}
\end{align*}
The residual permutation $\sigma'$ leaves the $m$th element alone and shifts 
the remaining $n-1$ elements; thus, one can regard $\sigma'$ as belonging to 
the permutation group $S_{n-1}$, incidentally justifying the notation. Since 
$\sigma^m_a$ is a composition of $\abs{m-a}$ transpositions, 
$\abs{\sigma}=\abs{\sigma'}\abs{\sigma^m_a}=(-1)^{m-a}\abs{\sigma'}$.

Iterating this process, for any $k<n$ and $a_0<...<a_{k-1}$, one can 
partition the symmetric group as
\begin{align*}
S_n=\bigcup_{a_0<...<a_{k-1}}\bigcup_{\varrho\in S_k}S_{n-k}(m\vert a_{\varrho(0)}...a_{\varrho(k-1)}),
\end{align*} 
where 
$S_{n-k}(m\vert a_0...a_{k-1}):=\lbrace\sigma\in S_n:\sigma(m+j)=a_j,\forall 0\leq j<k\rbrace$.	
Each element $\sigma\in S_{n-k}(m\vert a_{\varrho(0)}...a_{\varrho(k-1)})$ 
can be factored as $\sigma=\sigma^{(r)}\circ\sigma^m_{a_0...a_{k-1}}\circ\varrho$, 
where $\varrho$ is interpreted to act only on 
$\lbrace a_0,...,a_{k-1}\rbrace$ and 
$\sigma^m_{a_0...a_{k-1}}=\sigma^{m+k-1}_{a_{k-1}}\circ\cdots\circ\sigma^{m+1}_{a_1}\circ\sigma^m_{a_0}$.
Again, the residual permutation $\sigma^{(r)}$ leaves fixed the $k$ elements 
following $m$ and shifts the remaining $n-k$ elements, thus allowing it to 
be regarded as belonging to $S_{n-k}$. The sign is computed to be
\begin{align*}
\abs{\sigma}=\abs{\sigma^{(r)}}\abs{\sigma^m_{a_0...a_{k-1}}}\abs{\varrho}=(-1)^{km+\frac{(k-1)k}{2}-(a_0+...+a_{k-1})}\abs{\sigma^{(r)}}\abs{\varrho}.
\end{align*}

{\bf The r-direction} - The following results prove that the van Est map 
$\Phi$ (\ref{vanEstMap}) commutes with the differentials in the 
$r$-direction. 

\begin{lemma}\label{delta'}
Let $\omega\in C^{p,q}_0(\G,\phi)$, then
\begin{align*}
\Phi(\delta'\omega)=\delta'(\Phi\omega)\in C^{p,q}_1(\gg_1,\phi).
\end{align*}
\end{lemma}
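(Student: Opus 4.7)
The plan is to unpack the definition of the van Est map and compute directly. Using Eq.~(\ref{Gp1st r}), the inner derivative $\overrightarrow{R}_z$ acts by
\[ \overrightarrow{R}_z(\delta'\omega)(\vec{\gamma}) = \rho_0^1(t_p(\gamma_1)\cdots t_p(\gamma_q))^{-1}\rho_1(z)\omega(\vec{\gamma}), \]
where I use that the derivative at $\tau=0$ of $\rho_1(\exp_G(\tau z))$ is the Lie $2$-algebra representation element $\rho_1(z)$. This exhibits $\overrightarrow{R}_z(\delta'\omega)$ as a product of a twisting factor $f(\vec{\gamma}):=\rho_0^1(t_p(\vec{\gamma}))^{-1}$, which equals the identity at the unit of $\G_p^q$, with the function $\rho_1(z)\omega(\vec{\gamma})$.

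Next I would apply $\overrightarrow{R}_{\sigma(\Xi)}$ via the Leibniz rule for iterated partial derivatives, expanding as a sum over subsets $A\subseteq\{1,\dots,q\}$ indicating which of the $q$ derivatives hit $f$ and which hit $\omega$:
\[ \overrightarrow{R}_{\sigma(\Xi)}\bigl(f\cdot\rho_1(z)\omega\bigr) = \sum_{A\subseteq\{1,\dots,q\}}\bigl(\overrightarrow{R}^A_{\sigma(\Xi)}f\bigr)(e)\cdot\rho_1(z)\bigl(\overrightarrow{R}^{A^c}_{\sigma(\Xi)}\omega\bigr)(e), \]
where $\overrightarrow{R}^A_{\sigma(\Xi)}$ differentiates in the coordinates indexed by $A$ while setting the remaining coordinates to units. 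The essential observation is that groupoid cochains are normalized: they vanish whenever any argument is a unit. Hence for $A\neq\emptyset$, the restriction of $\omega$ to the locus where the coordinates indexed by $A$ are units is identically zero, so the partial derivative $\bigl(\overrightarrow{R}^{A^c}_{\sigma(\Xi)}\omega\bigr)(e)$ vanishes.

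Only the term $A=\emptyset$ survives; since $f(e)=I$, it reduces to $\rho_1(z)\overrightarrow{R}_{\sigma(\Xi)}\omega$. Summing over $\sigma\in S_q$ with signs and invoking Eq.~(\ref{Alg1st r}),
\[ \Phi(\delta'\omega)(\Xi;z) = \sum_\sigma\abs{\sigma}\rho_1(z)\overrightarrow{R}_{\sigma(\Xi)}\omega = \rho_1(z)(\Phi\omega)(\Xi) = \delta'(\Phi\omega)(\Xi;z). \]
The main obstacle is controlling the Leibniz cross terms: without the normalization convention, derivatives of the twisting factor $\rho_0^1(t_p(\vec{\gamma}))^{-1}$ would produce genuine nonzero contributions involving $\rho_0^1(\hat{t}_p(\xi_{\sigma(i)}))$, and the identity would fail. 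Normalization of the group-side cochains is precisely the mechanism that kills these terms and allows the twist to disappear upon infinitesimalization.
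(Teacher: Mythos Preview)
Your argument is correct and follows essentially the same line as the paper: differentiate first in $\tau$ to produce the product $\rho_0^1(t_p(\vec{\gamma}))^{-1}\dot{\rho}_1(z)\,\omega(\vec{\gamma})$, then apply the Leibniz rule in the $\lambda$-variables and observe that only the term where all derivatives fall on $\omega$ survives. The paper phrases this as an induction on the variables $\lambda_1,\dots,\lambda_q$ (peeling off one derivative at a time and dropping the cross term), whereas you write the full multivariate Leibniz expansion indexed by subsets $A$ and kill the $A\neq\emptyset$ terms in one stroke; the content is the same. Your explicit appeal to normalization of the cochains is precisely the mechanism that justifies dropping the cross terms, a point the paper leaves tacit in this lemma (compare the analogous step in the proof of Proposition~\ref{partial}, where the cross term is displayed before being discarded).
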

\begin{proof}
For $q=0$, $\omega=v\in V$ and $z\in\gg$,
\begin{align*}
\Phi(\delta'v)(z) & =\frac{d}{d\tau}\rest{\tau=0}(\delta'v)(\exp(\tau z))=\frac{d}{d\tau}\rest{\tau=0}\rho_1(\exp(\tau z))v=\dot{\rho}_1(z)v=(\delta'v)(z),
\end{align*}
and $\Phi$ is defined to be the identity when $(q,r)=(0,0)$. 

$t_p$ is a composition of face maps and so is its derivative $\hat{t}_p$; 
therefore, it is a group homomorphism and, if $\xi\in\gg_p$,
\begin{align}\label{expFinTarget}
t_p(\exp_{\G_p}(\xi))=\exp_{H}(\hat{t}_p\xi).
\end{align}
For $q>0$, let $\Xi=(\xi_1,...,\xi_q)\in\gg_p^q$, then setting 
$h_j:=\prod_{k=j}^q\exp(\lambda_k\hat{t}_p(\xi_k))=\exp(\lambda_{j}\hat{t}_p(\xi_{j}))...\exp(\lambda_q\hat{t}_p(\xi_q))\in H$, we compute
\begin{align*}
\overrightarrow{R}_{\Xi}R_z(\delta'\omega) & =\frac{d^I}{d\lambda_I}\rest{\lambda_I=0}\frac{d}{d\tau}\rest{\tau=0}(\delta'\omega)(\exp(\lambda_I\cdot\Xi);\exp(\tau z))=\frac{d^I}{d\lambda_I}\rest{\lambda_I=0}\rho_0^1(h_1)^{-1}\dot{\rho}_1(z)\omega(\exp(\lambda_I\cdot\Xi))	\\
										   & =\rho_0^1(h_2)^{-1}\dot{\rho}_1(z)R_{\xi_1}\omega(\exp(\lambda_{2}\xi_{2}),...,\exp(\lambda_q\xi_q));
\end{align*}
inductively implying 
$\overrightarrow{R}_{\Xi}R_z(\delta'\omega)=\dot{\rho}_1(z)(\overrightarrow{R}_{\Xi}\omega)$.
Thus,
\begin{align*}
\Phi(\delta'\omega)(\Xi;z) & =\sum_{\sigma\in S_q}\abs{\sigma}\overrightarrow{R}_{\sigma(\Xi)}R_z(\delta'\omega)=\sum_{\sigma\in S_q}\abs{\sigma}\dot{\rho}_1(z)(\overrightarrow{R}_{\sigma(\Xi)}\omega)=\dot{\rho}_1(z)\big{(}(\Phi\omega)(\Xi)\big{)}=\delta'(\Phi\omega)(\Xi;z).
\end{align*}

\end{proof}

\begin{lemma}\label{delk}
Let $G$ be a Lie group with Lie algebra $\gg$ and let $V$ be a vector space.  
If $\omega\in C(G^p,V)$,  $X=(x_0,...,x_p)\in\gg^{p+1}$, 
and $\partial_k$ is the $k$th face map (\ref{faceMaps}) with $0<k<p+1$, 
then for all $m<n$, 
\begin{align*}
\sum_{\sigma\in S_{p-1}(k-1\vert mn)\cup S_{p-1}(k-1\vert nm)}\abs{\sigma}\overrightarrow{R}_{\sigma(X)}\partial_k^*\omega & =(-1)^{m+n}\sum_{\varrho\in S_{p-1}}\abs{\varrho}\overrightarrow{R}_{\varrho(X^{(k)}_{mn})}\omega,
\end{align*}
where, we interpret $S_{p-1}$ as the space of bijections between the set of 
coordinates of $X(k-1,k)$ and the set of coordinates of $X(m,n)$, and  
$\varrho(X^{(k)}_{mn}):=(\varrho(x_0),...,\varrho(x_{k-2}),[x_m,x_n],\varrho(x_{k+1}),...,\varrho(x_p))$. 
\end{lemma}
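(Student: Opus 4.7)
The strategy is to show that when the face map $\partial_k$---which combines the $(k-1)$-th and $k$-th entries via group multiplication---interacts with the right-invariant derivatives $\overrightarrow{R}_{\sigma(X)}$, antisymmetrizing over the two subsets $S_{p-1}(k-1\vert mn)$ and $S_{p-1}(k-1\vert nm)$ produces exactly the commutator $[x_m,x_n]$ in slot $k-1$. The underlying elementary identity is
\[
\frac{\partial^2}{\partial s\,\partial t}\bigg\vert_{0}\!\big[f(\exp(tx)\exp(sy))-f(\exp(ty)\exp(sx))\big]=\frac{d}{du}\bigg\vert_{0}f(\exp(u[x,y])),
\]
valid for any smooth $f\colon G\to V$ and $x,y\in\gg$. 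It follows from Baker--Campbell--Hausdorff, $\exp(tx)\exp(sy)=\exp(tx+sy+\tfrac{1}{2}st[x,y]+O(3))$: Taylor expansion yields a symmetric Hessian term $d^2\tilde{f}|_0(x,y)$ together with $\tfrac{1}{2}d\tilde{f}|_0[x,y]$, and the Hessian contributions cancel upon subtraction by symmetry of mixed partials.

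With this tool in hand, I would first use the factorization $\sigma=\sigma^{(r)}\circ\sigma^{k-1}_{mn}\circ\varrho$ from the excerpt to reorganize the LHS as a sum over pairs $(\sigma^{(r)},\varrho)\in S_{p-1}\times\{\mathrm{id},\mathrm{swap}\}$: $\varrho=\mathrm{id}$ encodes $\sigma\in S_{p-1}(k-1\vert mn)$ and $\varrho=\mathrm{swap}$ encodes $\sigma\in S_{p-1}(k-1\vert nm)$, with $\abs{\sigma}=(-1)^{2(k-1)+1-(m+n)}\abs{\sigma^{(r)}}\abs{\varrho}$. Grouping terms by $\sigma^{(r)}$, the sum over $\varrho$ is precisely the antisymmetrization of the two terms that the elementary identity above was designed to collapse, because the multiplication introduced by $\partial_k$ places $\exp(\lambda_{k-1}x_m)\exp(\lambda_k x_n)$ or $\exp(\lambda_{k-1}x_n)\exp(\lambda_k x_m)$ into slot $k-1$ of $\omega$, with every other slot identical between the two cases.

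Applying the identity collapses the inner two derivatives in $(\lambda_{k-1},\lambda_k)$ to a single derivative in direction $[x_m,x_n]$ in slot $k-1$ of $\omega$; the remaining $p-1$ derivatives on the other slots, indexed by $\sigma^{(r)}$, assemble into $\overrightarrow{R}_{\sigma^{(r)}(X^{(k)}_{mn})}\omega$ once we identify $\sigma^{(r)}$ with a bijection between $\{0,\ldots,p\}\setminus\{k-1,k\}$ and $\{x_0,\ldots,x_p\}\setminus\{x_m,x_n\}$, which is precisely the interpretation of $\varrho$ given in the statement. The overall sign $(-1)^{m+n}$ emerges from $\abs{\sigma^{k-1}_{mn}}$. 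The primary obstacle I anticipate is the careful coordination of signs: marrying $\abs{\sigma^{k-1}_{mn}}$ with the signs produced by BCH and by Clairaut's cancellation so as to obtain exactly the factor $(-1)^{m+n}$; once the sign conventions are fixed, the rest of the argument is essentially mechanical.
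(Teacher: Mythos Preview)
Your proposal is correct and follows essentially the same approach as the paper: pair each $\sigma\in S_{p-1}(k-1\vert mn)$ with its unique partner $\bar{\sigma}\in S_{p-1}(k-1\vert nm)$ sharing the same residual permutation, then use the commutator identity to collapse each pair into a single $R_{[x_m,x_n]}$ in slot $k-1$. The only cosmetic difference is that the paper invokes the identity $R_{[x_m,x_n]}\varphi=\frac{d}{d\tau_2}\big\vert_{0}\frac{d}{d\tau_1}\big\vert_{0}\big(\varphi(\exp(\tau_1x_n)\exp(\tau_2x_m))-\varphi(\exp(\tau_1x_m)\exp(\tau_2x_n))\big)$ as a ``by definition'' fact about right-invariant vector fields rather than deriving it from BCH, and your anticipated sign issue is indeed the only point requiring care.
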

\begin{proof}
Let $\sigma\in S_{p-1}(k-1\vert mn)$, then 
$\sigma=\sigma''\circ\sigma^{k-1}_{mn}$ and there exists a unique 
$\bar{\sigma}\in S_{p-1}(k-1\vert nm)$ given by 
$\bar{\sigma}=\sigma''\circ\sigma^{k-1}_{nm}$.
By definition, if $\varphi\in C(G,V)$,  
\begin{align*}
R_{[x_m,x_n]}\varphi & =\frac{d}{d\tau_2}\rest{\tau_2=0}\frac{d}{d\tau_1}\rest{\tau_1=0}\varphi(\exp(\tau_1x_n)\exp(\tau_2x_m))-\varphi(\exp(\tau_1x_m)\exp(\tau_2x_n));
\end{align*} 
hence, regarding $\sigma''$ as belonging to $S_{p-1}$,
\begin{align*}
\abs{\sigma}\overrightarrow{R}_{\sigma(X)}\partial_k^*\omega +\abs{\bar{\sigma}}\overrightarrow{R}_{\bar{\sigma}(X)}\partial_k^*\omega & =(-1)^{m+n}\abs{\sigma''}\overrightarrow{R}_{\sigma''(X^{k}_{mn})}\omega.
\end{align*}

\end{proof}

\begin{proposition}\label{delta1}
Let $r>0$ and $\omega\in C^{p,q}_r(\G,\phi)$, then
\begin{align*}
\Phi(\delta_{(1)}\omega)=\delta_{(1)}(\Phi\omega)\in C^{p,q}_{r+1}(\gg_1,\phi).
\end{align*}
\end{proposition}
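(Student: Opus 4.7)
The plan is to verify $\Phi(\delta_{(1)}\omega)=\delta_{(1)}(\Phi\omega)$ by a direct Leibniz-rule expansion, reducing to the classical van Est identity for the Lie group $G$. Since the operators $\overrightarrow{R}_{\sigma(\Xi)}$ and $\overrightarrow{R}_{\varrho(Z)}$ act on mutually independent arguments of $\omega$, they commute, so $\Phi$ factors as a fiberwise classical van Est for $G$ in the $r$-direction, followed by the classical van Est for $\G_p$ in the $q$-direction (or in the opposite order, interchangeably).

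First, I would expand $\delta_{(1)}\omega$ according to (\ref{LGpdDifferential}) applied to the Lie group bundle $\G_p^q\times G\rightrightarrows\G_p^q$: a representation term $\rho_0^1(i(g_0^{t_p(\gamma_1)\cdots t_p(\gamma_q)}))\omega(\vec{\gamma}; g_1,\ldots,g_r)$, interior face-map terms $(-1)^k\omega(\vec{\gamma}; g_0,\ldots,g_{k-1}g_k,\ldots,g_r)$, and a boundary omission term. Performing the antisymmetrized $g$-direction differentiation $\sum_{\varrho}\abs{\varrho}\overrightarrow{R}_{\varrho(Z)}$ at fixed $\vec{\gamma}$ invokes Lemma~\ref{delk} on the face-map contributions, reassembling them into the CE bracket terms $[z_m,z_n]$; combined with the representation term, whose $\tau_0$-derivative produces the prefactor $\dot{\rho}_0^1(\mu(z_0^{t_p(\vec{\gamma})}))$, one recovers, for each fixed $\vec{\gamma}$, the Chevalley-Eilenberg differential of the fiberwise van Est image $\Phi_r\omega(\vec{\gamma};-)\in\bigwedge^r\gg^*\otimes W$ with respect to the $\vec{\gamma}$-twisted representation $\dot{\rho}_0^1\circ\mu\circ(-)^{t_p(\vec{\gamma})}$.

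Next, I would apply the $\vec{\gamma}$-direction differentiation $\overrightarrow{R}_{\sigma(\Xi)}$. Using the Peiffer identity $i(g^h)=h^{-1}i(g)h$, the twisted prefactor factors as $\rho_0^1(t_p(\vec{\gamma}))^{-1}\dot{\rho}_0^1(\mu(z_0))\rho_0^1(t_p(\vec{\gamma}))$, which collapses to $\rho_{(1)}(z_0)$ at $\vec{\gamma}=1$. The principal term of the Leibniz expansion in the $\lambda_i$'s, where no $\xi_i$-derivative falls on the conjugating factors, reproduces the expected action part $\sum_{j}(-1)^j\rho_{(1)}(z_j)\Phi\omega(\Xi; Z\setminus\{z_j\})$ of $\delta_{(1)}\Phi\omega$. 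The bracket and omission contributions, having no $\vec{\gamma}$-dependence in their coefficients, pass through the $\vec{\gamma}$-differentiation unchanged and reassemble into the CE bracket terms on the other side.

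The main obstacle will be showing the cancellation of the nontrivial cross-terms in the Leibniz expansion, where some $\xi_i$-derivatives act on the conjugating factors $\rho_0^1(t_p(\vec{\gamma}))^{\pm1}$. Such cross-terms produce prefactors of the form $\dot{\rho}_0^1(\mu(\Lie_{\hat{t}_p(\xi_{a_1})}\cdots\Lie_{\hat{t}_p(\xi_{a_k})}z_0))$ via the equivariance relation $[y,\mu(x)]=\mu(\Lie_y x)$ and the Lie algebra representation property $\dot\rho_0^1([y_1,y_2])=[\dot\rho_0^1(y_1),\dot\rho_0^1(y_2)]$, whereas the infinitesimal representation $\rho_{(1)}(z)=\dot\rho_0^1(\mu(z))$ carries no $\xi$-dependence and so they have no counterpart on the right-hand side. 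Establishing that they sum to zero via the antisymmetrization $\sum_{\sigma\in S_q}\abs{\sigma}$ combined with symmetry properties of the iterated $\Lie$-expressions under permutations of the $\xi$-arguments, organized inductively on the number $k$ of derivatives absorbed by the conjugating factors, is the most technically demanding step of the proof.
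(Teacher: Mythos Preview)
Your overall architecture matches the paper's: expand $\delta_{(1)}\omega$ into the representation term, the interior face-map terms, and the last face-map term; dispose of the last term as constant in $\tau_r$; use Lemma~\ref{delk} and the partitions of $S_{r+1}$ to reassemble the interior terms into the Chevalley--Eilenberg brackets; and extract the representation part from the first term. So far so good.

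The gap is in your treatment of the cross-terms. You propose that the terms where some $\lambda_j$-derivatives land on the conjugating factor $\rho_0^1(i(\exp(\tau_0 z_0)^{t_p(\gamma_1)\cdots t_p(\gamma_q)}))$ cancel after antisymmetrization over $S_q$, via symmetry properties of iterated $\Lie$-expressions. This mechanism cannot work: already for $q=1$ there is a single permutation, and the cross-term
\[
-\dot{\rho}_0^1\bigl(\mu(\Lie_{\hat{t}_p(\xi_1)}z_0)\bigr)\,(\overrightarrow{R}_{Z(0)}\omega)(1_{\G_p})
\]
has nothing to cancel against. The paper's proof disposes of these terms by a much simpler observation: whenever a $\lambda_j$-derivative falls on the prefactor, the remaining factor is $(\overrightarrow{R}_{Z(0)}\omega)$ evaluated with the $j$th $\G_p$-argument frozen at the identity $\exp(0)$, and this vanishes (the groupoid cochains being normalized). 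Thus each cross-term is individually zero, and the induction over $\lambda_1,\dots,\lambda_q$ propagates cleanly, yielding $I=\dot{\rho}_0^1(\mu(z_0))\,\overrightarrow{R}_{\Xi}\overrightarrow{R}_{Z(0)}\omega$ directly. Replace your proposed antisymmetrization argument with this observation and the proof goes through with far less effort than you anticipated.
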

\begin{proof}
Let $\Xi=(\xi_1,...,\xi_q)\in\gg_p^q$ and $Z=(z_0,...,z_r)\in\gg^{r+1}$. 
$\overrightarrow{R}_\Xi\overrightarrow{R}_Z(\delta_{(1)}\omega)$ has got 
three types of terms corresponding respectively to the dual face maps 
$\delta_0^*$, $\delta_k^*$ for $1\leq k\leq r$ and $\delta_{r+1}^*$ in 
$\delta_{(1)}\omega$: 
\begin{align*}
I   & :=\frac{d^I}{d\lambda_I}\rest{\lambda_I=0}\frac{d^J}{d\tau_J}\rest{\tau_J=0}\rho_0^1(i(\exp(\tau_0z_0)^{\prod_{j=1}^q t_p(\exp(\lambda_j\xi_j))}))\omega(\exp(\lambda_I\cdot\Xi);\exp(\tau_1z_1),...,\exp(\tau_rz_r)) \\
II  & :=\frac{d^I}{d\lambda_I}\rest{\lambda_I=0}\frac{d^J}{d\tau_J}\rest{\tau_J=0}\omega(\exp(\lambda_I\cdot\Xi);\exp(\tau_0z_0),...,\exp(\tau_kz_k)\exp(\tau_{k+1}z_{k+1}),...,\exp(\tau_rz_r)) \\
III & :=\frac{d^I}{d\lambda_I}\rest{\lambda_I=0}\frac{d^J}{d\tau_J}\rest{\tau_J=0}\omega(\exp(\lambda_I\cdot\Xi);\exp(\tau_0z_0),...,\exp(\tau_{r-1}z_{r-1})).
\end{align*}
Type $III$ terms are constant with respect to $\tau_r$ and thus vanish. 

For any fixed $0\leq k\leq r$, partition 
$S_{r+1}=\bigcup_{m<n}(S_{r-1}(k-1\vert mn)\cup S_{r-1}(k-1\vert nm))$ and 
use Lemma~\ref{delk} to conclude 
\begin{align*}
\sum_{\varrho\in S_{r+1}}\abs{\varrho}\overrightarrow{R}_{\varrho(Z)}\delta_k^*\omega & =\sum_{m<n}(-1)^{m+n}\sum_{\varrho'\in S_{r-1}}\abs{\varrho'}\overrightarrow{R}_{\varrho'(Z^{(k)}_{mn})}\omega.
\end{align*}
For each pair $m<n$, using $S_r=\bigcup_{k=1}^rS_{r-1}(0\vert k)$, 
\begin{align*}
\sum_{\varrho\in S_r}\abs{\varrho}\overrightarrow{R}_{\varrho([z_m,z_n],Z(m,n))}\omega=\sum_{k=1}^r\sum_{\varrho'\in S_{r-1}}(-1)^k\abs{\varrho'}\overrightarrow{R}_{\varrho'(Z^{(k)}_{mn})}\omega;
\end{align*}
thus, summing all type $II$ terms yields
\begin{align*}
\sum_{\sigma\in S_q}\sum_{\varrho\in S_{r+1}}\sum_{k=1}^r(-1)^k\abs{\sigma}\abs{\varrho}\overrightarrow{R}_{\sigma(\Xi)}\overrightarrow{R}_{\varrho(Z)}(\delta_k^*\omega)=\sum_{m<n}(-1)^{m+n}(\Phi\omega)(\Xi;[z_m,z_n],Z(m,n)).
\end{align*}

Concludingly, recall that for $y\in\hh$ and $x\in\gg$, 
\begin{align*}
\frac{d}{d\lambda}\rest{\lambda=0}\frac{d}{d\tau}\rest{\tau=0}\rho_0^1(i(\exp(\tau x)^{\exp(\lambda y)})) & =\frac{d}{d\lambda}\rest{\lambda=0}\dot{\rho}_0^1(\mu(x^{\exp(\lambda y)}))=-\dot{\rho}_0^1(\mu(\Lie_yx));
\end{align*}
hence, setting  
$h_j:=\prod_{k=j}^q\exp(\lambda_{k}\hat{t}_p(\xi_{k}))=\exp(\lambda_{j}\hat{t}_p(\xi_{j}))...\exp(\lambda_q\hat{t}_p(\xi_q))\in H$,
we compute 
\begin{align*}
\frac{d}{d\lambda_1}\rest{\lambda_1=0}\dot{\rho}_0^1(\mu(z_0^{h_1}))(\overrightarrow{R}_{Z(0)}\omega)(\exp(\lambda_I\cdot\Xi)) 
	& =\dot{\rho}_0^1(\mu(z_0^{\exp(0)h_2}))R_{\xi_1}(\overrightarrow{R}_{Z(0)}\omega)(\exp(\lambda_{2}\xi_{2}),...,\exp(\lambda_q\xi_q))+ \\
	& \qquad -\dot{\rho}_0^1(\mu((\Lie_{\hat{t}_p(\xi_1)}z_0)^{h_2}))(\overrightarrow{R}_{Z(0)}\omega)(\exp(0),\exp(\lambda_{2}\xi_{2}),...,\exp(\lambda_q\xi_q)) \\
  & =\dot{\rho}_0^1(\mu(z_0^{h_2}))R_{\xi_1}(\overrightarrow{R}_{Z(0)}\omega)(\exp(\lambda_{2}\xi_{2}),...,\exp(\lambda_q\xi_q))
\end{align*}
and inductively, 
$I=\dot{\rho}_0^1(\mu(z_0))\overrightarrow{R}_{\Xi}\overrightarrow{R}_{Z(0)}\omega$.
Using the partition of $S_{r+1}$ by $S_r(0\vert k)$'s and summing type 
$I$ terms yields
\begin{align*}
\sum_{\sigma\in S_{q}}\sum_{\varrho\in S_{r+1}}\abs{\sigma}\abs{\varrho}\overrightarrow{R}_{\sigma(\Xi)}\overrightarrow{R}_{\varrho(Z)}(\delta_0^*\omega) 
						 & =\sum_{\sigma\in S_{q}}\sum_{k=0}^r\sum_{\varrho'\in S_r(0\vert k)}(-1)^{k}\abs{\sigma}\abs{\varrho'}\dot{\rho}_0^1(\mu(z_k))\overrightarrow{R}_{\sigma(\Xi)}\overrightarrow{R}_{\varrho'(Z(k))}\omega \\
						 & =\sum_{k=0}^r(-1)^k\dot{\rho}_0^1(\mu(z_{k}))(\Phi\omega)(\Xi;Z(k)),
\end{align*}
and the result follows.
			 
\end{proof}

{\bf The q-direction} - The following results prove that the van Est map 
$\Phi$ (\ref{vanEstMap}) commutes with the differentials in the 
$q$-direction. Since the differentials in the $r$-direction do commute 
with differentials in the $q$-direction, we prove that for constant $p$, 
$\Phi$ yields a map of double complexes that we refer to as $p$-pages.

\begin{lemma}\label{Multilin}
Let $\xymatrix{T:V\times ...\times V \ar[r] & W}$ be an $r$-multilinear map. 
If $H_\lambda$ is a differentiable path of automorphisms of $V$ with 
$H_0=Id_V$, then
\begin{align*}
\frac{d}{d\lambda}\rest{\lambda=0}T(H_\lambda(v_1),...,H_\lambda(v_r)) & =\sum_{k=1}^rT(v_1,...,v_{k-1},\frac{d}{d\lambda}\rest{\lambda=0}H_{\lambda}(v_k),v_{k+1},...,v_r)
\end{align*}
\end{lemma}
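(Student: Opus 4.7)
The plan is to prove this as a straightforward application of the chain rule together with the multilinearity of $T$, i.e., recognize it as the Leibniz rule for multilinear maps. First I would introduce the auxiliary curve $\xymatrix{\gamma:\mathbb{R} \ar[r] & V^r}$ given by $\gamma(\lambda):=(H_\lambda(v_1),\ldots,H_\lambda(v_r))$ so that the left-hand side is just $\frac{d}{d\lambda}\big|_{\lambda=0}(T\circ\gamma)(\lambda)$. Since $\gamma(0)=(v_1,\ldots,v_r)$ and $\gamma'(0)=\big(\frac{d}{d\lambda}|_{\lambda=0}H_\lambda(v_1),\ldots,\frac{d}{d\lambda}|_{\lambda=0}H_\lambda(v_r)\big)$, the chain rule reduces the statement to computing the differential of $T$ at the point $(v_1,\ldots,v_r)$.

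Next I would compute this differential using the standard fact that if $\xymatrix{T:V^r \ar[r] & W}$ is $r$-multilinear, then its differential at $(v_1,\ldots,v_r)$ applied to a tangent vector $(u_1,\ldots,u_r)\in V^r$ equals $\sum_{k=1}^r T(v_1,\ldots,v_{k-1},u_k,v_{k+1},\ldots,v_r)$. One way to see this cleanly is to write the auxiliary function $\tilde{F}:\mathbb{R}^r \to W$, $\tilde{F}(\lambda_1,\ldots,\lambda_r):=T(H_{\lambda_1}(v_1),\ldots,H_{\lambda_r}(v_r))$, so that $F(\lambda):=T\circ\gamma(\lambda)=\tilde{F}(\lambda,\ldots,\lambda)$. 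Applying the chain rule along the diagonal yields $F'(0)=\sum_{k=1}^r\partial_{\lambda_k}\tilde{F}\big|_0$, and for each fixed $k$ the multilinearity of $T$ together with $H_0=Id_V$ lets one pull the derivative inside the $k$-th slot, giving exactly the right-hand side.

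No real obstacle appears here; the statement is essentially a coordinate-free restatement of the product rule. The only mild subtlety is to verify that $T$ is automatically smooth (being multilinear on a finite-dimensional space, which is tacitly assumed throughout the paper), so that one may indeed invoke the chain rule without further regularity discussion. With that point noted, the result follows immediately.
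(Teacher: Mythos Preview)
Your argument is correct. Both your proof and the paper's reduce the statement to the Leibniz rule for multilinear maps, but the presentations differ: the paper fixes a basis $\{e_i\}$ of $V$, writes $H_\lambda(e_a)=H_a^b(\lambda)e_b$ with $H_a^b(0)=\delta_a^b$, expands $T(H_\lambda(e_{a_1}),\ldots,H_\lambda(e_{a_r}))$ as a product of scalar coefficient functions, and applies the ordinary one-variable product rule to that product. Your approach is coordinate-free, either invoking the known formula for the differential of a multilinear map or, equivalently, introducing separate parameters $\lambda_1,\ldots,\lambda_r$ and using the chain rule along the diagonal. Your route is a bit cleaner and makes the Leibniz structure transparent without indices; the paper's route is more explicit and self-contained, requiring no appeal to the differential of $T$ as a standard fact. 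Either way the content is the same elementary computation.
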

\begin{proof}
Let $\lbrace e_i\rbrace_{i=1}^n$ be a basis for $V$. In these coordinates, 
$R_{a_1...a_r}:=R(e_{a_1},...,e_{a_r})$ and 
$H_\lambda(e_a)=H_a^b(\lambda)e_b$. Since $H_0=Id_V$, $H_a^b(0)=\delta_a^b$, 
on basic elements,
\begin{align*}
\frac{d}{d\lambda}\rest{\lambda=0}R(H_\lambda(e_{a_1}),...,H_\lambda(e_{a_r})) & =\frac{d}{d\lambda}\rest{\lambda=0}R(H_{a_1}^{b_1}(\lambda)e_{b_1},...,H_{a_r}^{b_r}(\lambda)e_{b_r})=\frac{d}{d\lambda}\rest{\lambda=0}H_{a_1}^{b_1}(\lambda)...H_{a_r}^{b_r}(\lambda)R_{b_1...b_r} \\
 & =R_{b_1...b_r}\sum_{k=1}^rH_{a_1}^{b_1}(0)...H_{a_{k-1}}^{b_{k-1}}(0)\dot{H}_{a_k}^{b_k}(0)H_{a_{k+1}}^{b_{k+1}}(0)...H_{a_r}^{b_r}(0) \\
 & =\sum_{k=1}^rR_{a_1...a_{k-1}b_ka_{k+1}...a_r}\dot{H}_{a_k}^{b_k}(0) \\
 & =\sum_{k=1}^rR(e_{a_1},...,e_{a_{k-1}},\frac{d}{d\lambda}\rest{\lambda=0}H_{a_k}^{b_k}(\lambda)e_{b_k},e_{a_{k+1}},...,e_{a_r}), 
\end{align*}
as desired.
\end{proof}		

\begin{proposition}\label{delta}
Let $r>0$ and $\omega\in C^{p,q}_r(\G,\phi)$, then
\begin{align*}
\Phi(\delta\omega)=\delta(\Phi\omega)\in C^{p,q+1}_r(\gg_1,\phi).
\end{align*}
\end{proposition}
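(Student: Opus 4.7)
The proof follows the template of Proposition~\ref{delta1}. Let $\Xi = (\xi_0, \ldots, \xi_q) \in \gg_p^{q+1}$ and $Z = (z_1, \ldots, z_r) \in \gg^r$, and expand $\Phi(\delta\omega)(\Xi;Z)$ using (\ref{vanEstMap}), the right-representation groupoid differential (\ref{RGpdDifferential}) applied to the transformation groupoid $\G_p \ltimes G^r$, and the representation $\rho^r_{\G_p}$ of (\ref{q-Rep}). The resulting sum over $\sigma \in S_{q+1}$ and $\varrho \in S_r$ splits into three types of terms according to the face map that produced them: (I) the trailing face $\delta_{q+1}$, which simultaneously substitutes $g_k \mapsto g_k^{t_p(\gamma_q)}$ and multiplies by $\rho_0^1(t_p(\gamma_q))^{-1}$; (II) the middle faces $\delta_k$ for $0 < k < q+1$, which concatenate $\gamma_{k-1} \vJoin \gamma_k$; and (III) the leading face $\delta_0$, which merely drops the first coordinate.

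Type III contributions vanish because they are constant in the innermost differentiation parameter, exactly as the analogous terms in Proposition~\ref{delta1}. For the Type II contributions, apply Lemma~\ref{delk} with the Lie group $\G_p$ (whose Lie algebra is $\gg_p$), pairing permutations via the partition $S_{q+1} = \bigcup_{m<n}(S_{q-1}(k-1|mn) \cup S_{q-1}(k-1|nm))$; summing over all $0 < k < q+1$ collects the bracket sum
\[
\sum_{m<n} (-1)^{m+n} (\Phi\omega)([\xi_m, \xi_n], \Xi(m,n); Z),
\]
which is precisely the second sum in the Chevalley-Eilenberg differential (\ref{CEDiff}) for $\rho_p^{(r)}$.

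For Type I, partition $S_{q+1} = \bigcup_{j=0}^{q} S_q(0|j)$ and differentiate first in the innermost parameter $\lambda_1$, which appears in both the $W$-coefficient $\rho_0^1(t_p(\exp(\lambda_1 \xi_j)))^{-1}$ and in the conjugated $G$-entries $\exp(\tau_k z_k)^{t_p(\exp(\lambda_1 \xi_j))}$. Using (\ref{expFinTarget}) and a telescoping argument on the remaining parameters $\lambda_2,\ldots,\lambda_q$ (as at the end of the proof of Proposition~\ref{delta1}), the $\rho_0^1$-factor differentiates to $\dot{\rho}_0^1(\hat{t}_p(\xi_j))$ acting on $(\Phi\omega)(\Xi(j); Z)$. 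For the conjugated entries, apply Lemma~\ref{Multilin} to the multilinear map $Z \mapsto \overrightarrow{R}_Z \omega$ evaluated on the remaining arguments, and use that the differential at the identity of the $H$-action $(h;z) \mapsto z^h$ recovers $\Lie_y z$ up to sign; this contributes $-\sum_{k=1}^r (\Phi\omega)(\Xi(j); z_1, \ldots, \Lie_{\hat{t}_p(\xi_j)} z_k, \ldots, z_r)$. Combining the two contributions with the sign $(-1)^{q+1}$ from $\delta_{q+1}$ and the sign $(-1)^j$ from the factorization of permutations in $S_q(0|j)$ yields
\[
\sum_{j=0}^q (-1)^j \rho_p^{(r)}(\xi_j)(\Phi\omega)(\Xi(j); Z),
\]
by the defining formula (\ref{q-rep}) for $\rho^{(r)}$. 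Adding the Type I and Type II contributions reproduces $\delta(\Phi\omega)(\Xi;Z)$.

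The main obstacle is the Type I term: ensuring that the two infinitesimal contributions along $\lambda_1$ --- the derivative of the $\rho_0^1$ cocycle of the action on $W$, and the derivative of the $H$-action on the $G$-entries --- amalgamate into a single Chevalley-Eilenberg action by $\rho^{(r)}$. Lemma~\ref{Multilin} is the key tool here, as it converts the derivative of $g^h$ in $h$ into the infinitesimal $\Lie$-action required by (\ref{q-rep}); the remainder of the argument is a careful bookkeeping of the signs introduced by the partitions of $S_{q+1}$ and the telescoping cancellation of higher-order $\dot{\rho}_0^1$-terms in the outer parameters.
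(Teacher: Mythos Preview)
Your overall template---split by face maps, handle the concatenation faces via Lemma~\ref{delk}, handle the ``action'' faces via the partitions $S_q(\cdot|j)$ and Lemma~\ref{Multilin}---is exactly the paper's. The gap is that you have misread which face does what in the transformation groupoid $\G_p\ltimes G^r$.

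Under the paper's identification of the $q$-nerve with $\G_p^q\times G^r$ (implicit in the definition of $\delta$ and in the van Est formula~(\ref{vanEstMap})), the base point $\vec g$ is the \emph{final target}. Hence the face $\delta_0$ drops $\gamma_0$ \emph{and} shifts the $G^r$-coordinates to $(\vec g)^{t_p(\gamma_0)}$, whereas the face $\delta_{q+1}$ drops $\gamma_q$ and applies only the $W$-action $\rho_0^1(t_p(\gamma_q))^{-1}$, with \emph{no} conjugation of the $G^r$-entries (look at the explicit Types~I and~III in the paper's proof, or check the case $q=0$ directly from~(\ref{q-Rep})). Your Type~III therefore does \emph{not} vanish: the $\delta_0$-term is not constant in the innermost parameter, and in fact it is precisely this term that, through Lemma~\ref{Multilin}, yields the $\Lie$-contribution $-\sum_k(\Phi\omega)(\Xi(j);\dots,\Lie_{\hat t_p(\xi_j)}z_k,\dots)$ to $\rho^{(r)}$. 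Conversely, your Type~I (the $\delta_{q+1}$-face) contributes only the $\dot\rho_0^1(\hat t_p(\xi_j))$-piece; the innermost parameter does not appear in its $G$-arguments at all, so Lemma~\ref{Multilin} has nothing to act on there. Relatedly, the partition $\bigcup_j S_q(0|j)$ isolates the innermost slot and is the right tool for the $\delta_0$-term; for $\delta_{q+1}$ you need $\bigcup_j S_q(q|j)$ instead.

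So the correct bookkeeping is: $\delta_0$ gives the $\Lie$-part of $\rho^{(r)}$ (paper's Type~I, via $S_q(0|j)$ and Lemma~\ref{Multilin}), $\delta_{q+1}$ gives the $\dot\rho_0^1$-part (paper's Type~III, via $S_q(q|j)$), and their sum reconstitutes $\sum_j(-1)^j\rho^{(r)}(\hat t_p(\xi_j))(\Phi\omega)(\Xi(j);Z)$. Your Type~II argument is fine as written.
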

\begin{proof}
Let $\Xi=(\xi_0,...,\xi_q)\in\gg_p^{q+1}$ and $Z=(z_1,...,z_r)\in\gg^r$. 
$\overrightarrow{R}_\Xi\overrightarrow{R}_Z(\delta\omega)$ has got 
three types of terms corresponding respectively to the dual face maps 
$\delta_0^*$, $\delta_j^*$ for $1\leq j\leq q$ and $\delta_{q+1}^*$ in 
$\delta\omega$: 
\begin{align*}
I   & :=\frac{d^I}{d\lambda_I}\rest{\lambda_I=0}\frac{d^J}{d\tau_J}\rest{\tau_J=0}\omega(\exp(\lambda_1\xi_1),...,\exp(\lambda_{q}\xi_{q});\exp(\tau_J\cdot Z)^{t_p(\exp(\lambda_0\xi_0))}) \\
II  & :=\frac{d^I}{d\lambda_I}\rest{\lambda_I=0}\frac{d^J}{d\tau_J}\rest{\tau_J=0}\omega(\exp(\lambda_0\xi_0),...,\exp(\lambda_{j-1}\xi_{j-1})\exp(\lambda_j\xi_j),...,\exp(\lambda_q\xi_q);\exp(\tau_J\cdot Z)) \\
III & :=\frac{d^I}{d\lambda_I}\rest{\lambda_I=0}\frac{d^J}{d\tau_J}\rest{\tau_J=0}\rho_0^1(t_p(\exp(\lambda_q\xi_q)))^{-1}\omega(\exp(\lambda_0\xi_0),...,\exp(\lambda_{q-1}\xi_{q-1});\exp(\tau_J\cdot Z)).
\end{align*}
Here, we used the notation $(g_1,...,g_r)^h:=(g_1^h,...,g_r^h)$ for 
$g_1,...,g_r\in G$ and $h\in H$. 

Using Eq.~(\ref{expFinTarget}), 
$III=-\dot{\rho}_0^1(\hat{t}_p(\xi_q))\overrightarrow{R}_{\Xi(q)}\overrightarrow{R}_Z\omega$; 
hence, using $S_{q+1}=\bigcup_{j=0}^qS_q(q\vert j)$ and summing type $III$ 
terms yields
\begin{align}\label{III}
\sum_{\sigma\in S_{q+1}} & \sum_{\varrho\in S_r}\abs{\sigma}\abs{\varrho}\frac{d}{d\lambda_{q+1}}\rest{\lambda_{q+1}=0}\rho_0^1(t_p(\exp(\lambda_q\xi_{\sigma(q)})))^{-1}\overrightarrow{R}_{\sigma(\Xi)}\overrightarrow{R}_{\varrho(Z)}(\delta_{q+1}^*\omega) \\
						 & =\sum_{j=0}^q\sum_{\sigma'\in S_{q}(q\vert j)}\sum_{\varrho\in S_r}(-1)^{q-j+1}\abs{\sigma'}\abs{\varrho}\dot{\rho}_0^1(\hat{t}_p(\xi_{j}))\overrightarrow{R}_{\sigma'(\Xi(j))}\overrightarrow{R}_{\varrho(Z)}\omega 
						   =\sum_{j=0}^q(-1)^{q-j+1}\dot{\rho}_0^1(\hat{t}_p(\xi_{j}))(\Phi\omega)(\Xi(j);Z).\nonumber
\end{align}

Let $\vec{\gamma}\in\G_p^q$, then 
$(\overrightarrow{R}_\bullet\omega)(\vec{\gamma})\in\bigwedge^r\gg^*\otimes W$; 
in particular, it is $r$-multilinear. Due to Eq.~(\ref{expFinTarget}), 
consider the differentiable path $(-)^{\exp(\lambda_0\hat{t}_p(\xi_0))}$  
of automorphisms through the identity and invoke Lemma~\ref{Multilin} to 
conclude 
\begin{align*}
\frac{d^I}{d\lambda_I}\rest{\lambda_I=0}\overrightarrow{R}_{Z^{\exp(\lambda_0\hat{t}_p(\xi_0))}}(\delta_0^*\omega)(\exp(\lambda_I\cdot\Xi)) & =-\sum_{k=1}^r\overrightarrow{R}_{\Xi(0)}R_{z_r}...R_{z_{k+1}}R_{\Lie_{\hat{t}_p(\xi_0)}z_k}R_{z_{k-1}}...R_{z_1}\omega. 
\end{align*}
Using $S_{q+1}=\bigcup_{j=0}^qS_q(0\vert j)$, the sum type $I$ terms 
yields
\begin{align}\label{I}
\sum_{\sigma\in S_{q+1}} & \sum_{\varrho\in S_r}\abs{\sigma}\abs{\varrho}\frac{d}{d\lambda_0}\rest{\lambda_0=0} \overrightarrow{R}_{\sigma(\Xi)}\overrightarrow{R}_{\varrho(Z)^{\exp(\lambda_0\hat{t}_p(\xi_{\sigma(0)}))}}(\delta_0^*\omega) \\
                         & =\sum_{j=0}^q\sum_{\sigma'\in S_{q}(0\vert j)}\sum_{\varrho\in S_r}\sum_{k=1}^r(-1)^{j+1}\abs{\sigma'}\abs{\varrho}\overrightarrow{R}_{\sigma'(\Xi(0))}\overrightarrow{R}_{z_{\varrho(r)}}...R_{z_{\varrho(k+1)}}R_{\Lie_{\hat{t}_p(\xi_j)}z_{\varrho(k)}}R_{z_{\varrho(k-1)}}...R_{z_{\varrho(1)}}\omega\nonumber \\
& =\sum_{j=0}^q\sum_{k=1}^r(-1)^{j+1}(\Phi\omega)(\Xi(j);z_1,...,\Lie_{\hat{t}_p(\xi_j)}z_k,...,z_r).\nonumber
\end{align}
Adding together (\ref{I}) and $(-1)^{q+1}$(\ref{III}), we get 
$\sum_{j=0}^q(-1)^{j+1}\rho^{(r)}(\xi_j)(\Phi\omega)(\Xi(j);Z)$ (cf. 
Eq.~(\ref{q-rep})).

Using a reasoning parallel to that in the proof of Proposition~\ref{delta1} 
for type $II$ terms, one concludes 
\begin{align*}
\sum_{\sigma\in S_{q+1}}\sum_{\varrho\in S_r}\sum_{j=1}^q(-1)^j\abs{\sigma}\abs{\varrho}\overrightarrow{R}_{\sigma(\Xi)}\overrightarrow{R}_{\varrho(Z)}(\delta_j^*\omega)=\sum_{m<n}(-1)^{m+n}(\Phi\omega)([\xi_m,\xi_n],\Xi(m,n);Z),
\end{align*}
and the result follows.
						 
\end{proof}

\begin{theorem}\label{p-pag}
For constant $p$, $\Phi$ restricts to a map $\Phi_p$ (\ref{Phi_p}) of 
double complexes.
\end{theorem}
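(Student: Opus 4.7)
The plan is to verify that the restriction $\Phi_p$ commutes with both differentials defining the double complex structure on the $p$-page of the grids (\ref{Alg3dimLat}) and (\ref{Gp3dimLat}). The failure of commutativity alluded to earlier in the text—which prevents the three-dimensional grid from being a triple complex—is precisely the failure of $\partial$ and $\delta$ to commute, an obstacle which disappears once $p$ is held constant. Thus the sources and targets of $\Phi_p$ genuinely are honest double complexes built from $\delta$, $\delta_{(1)}$, and the boundary map $\delta'$ connecting the rows $r=0$ and $r=1$, so it suffices to show that $\Phi_p$ intertwines each of these.

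Most of the required commutations are already in hand. For $r>0$, the relation $\Phi\circ\delta=\delta\circ\Phi$ is exactly Proposition~\ref{delta}, while $\Phi\circ\delta_{(1)}=\delta_{(1)}\circ\Phi$ is Proposition~\ref{delta1}; Lemma~\ref{delta'} handles the connecting map $\delta'$. These together cover every square of the double complex in which the coefficient bundle takes values in $W$.

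The remaining cases are those in which the coefficient bundle collapses to $V$. When $r=0$, the $q$-direction differential $\delta$ is by definition the group cochain differential of $\G_p$ with values in the pullback $t_p^*\rho_0^0$, while its algebra counterpart is the Chevalley-Eilenberg differential of $\gg_p$ with values in $\hat{t}_p^*\dot{\rho}_0^0$; the restriction of $\Phi$ to this row tautologically coincides with the classical van Est map for the Lie group $\G_p$ with Lie algebra $\gg_p$, and the commutation is the standard content of Theorem~\ref{vanEst}. Analogously, when $q=0$, the $r$-direction differential $\delta_{(1)}$ reduces to the classical group cochain differential for $G$ with values in $i^*\rho_0^1$, whose differential is the Chevalley-Eilenberg $\delta_{(1)}$ on $\gg$ with values in $\mu^*\dot{\rho}_0^1$; the commutation is again classical.

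I expect no substantial new computation beyond these citations; the only real work is bookkeeping to confirm that the sign conventions, the formulas for the pullback representations appearing in the definitions of the $q$- and $r$-direction complexes, and the degenerate behavior at $q=0$ and $r=0$ are all compatible with the classical van Est chain-map statement along each edge of the double complex.
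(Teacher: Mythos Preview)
Your proposal is correct and matches the paper's approach: Theorem~\ref{p-pag} is stated without an explicit proof, being understood as the summary of Lemma~\ref{delta'}, Proposition~\ref{delta1}, and Proposition~\ref{delta} together with the observation that at $r=0$ the map reduces to the classical van Est map for $\G_p$. Your remark about the $q=0$ row is slightly redundant, since Proposition~\ref{delta1} already covers all $q\geq 0$ when $r>0$, but this does no harm.
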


{\bf The p-direction} - The following results prove that the van Est map 
$\Phi$ (\ref{vanEstMap}) commutes with the differentials in the 
$p$-direction. Since the differentials in the $r$-direction do commute 
with differentials in the $p$-direction, we prove that for constant $q$, 
$\Phi$ yields a map of double complexes that we refer to as $q$-pages.

\begin{proposition}\label{partial}
Let $r>0$ and $\omega\in C^{p,q}_r(\G,\phi)$, then
\begin{align*}
\Phi(\partial\omega)=\partial(\Phi\omega)\in C^{p+1,q}_r(\gg_1,\phi).
\end{align*}
\end{proposition}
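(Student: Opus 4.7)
The plan is to parallel the $r=0$ Lemma above, with extra care for the representation factor that appears in the $k=0$ face of $\partial$ via Eq.~(\ref{p-rep}). First I would expand $(\partial\omega)(\vec\gamma;\vec f)$ using Eq.~(\ref{LGpdDifferential}) for the product groupoid $\G^q\times G^r$ over $H^q\times G^r$, isolating the $k=0$ term --- which carries the representation factor $\rho_0^1(i(pr_G(\cdots)))^{-1}$ coming from (\ref{p-rep}) --- from the faces with $k\ge 1$. Because every face map and the representation factor depend only on the $\vec\gamma$ part and not on $\vec f\in G^r$, the operator $\overrightarrow{R}_{\varrho(Z)}$ commutes with the entire expression, so I can defer it and work with $\overrightarrow{R}_{\sigma(\Xi)}$ alone. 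For the $k\ge 1$ face terms the analysis is identical to that of the $r=0$ Lemma: each $\partial_k$ restricts slotwise to a Lie group homomorphism $\G_{p+1}\to\G_p$, so Eq.~(\ref{expFaces}) gives $\partial_k\exp_{\G_{p+1}}(\xi_{\sigma(\ell)})=\exp_{\G_p}(\hat\partial_k\xi_{\sigma(\ell)})$ in every slot $\ell$, and summing over $\sigma,\varrho$ with the signs produces $\sum_{k=1}^{p+1}(-1)^k(\Phi\omega)(\hat\partial_k\Xi;Z)$.

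Next I would tackle the $k=0$ term by iterated Leibniz, splitting the derivative as $\frac{d^I}{d\lambda_I}|_0[F_1(\lambda)F_2(\lambda)]=\sum_{S\subseteq\{1,\ldots,q\}}(\partial_\lambda^S F_1)(0)(\partial_\lambda^{I\setminus S}F_2)(0)$, where $F_1(\lambda)$ is the representation factor and $F_2(\lambda)=\omega(\partial_0\exp_{\G_{p+1}}(\lambda\cdot\Xi_\sigma);\cdot)$. The subset $S=\emptyset$ uses $F_1(0)=Id$ (because $pr_G$ evaluated at the identity of $\G$ is the identity of $G$) and yields exactly $(\Phi\omega)(\hat\partial_0\Xi;Z)$; combined with the $k\ge 1$ contributions this assembles the desired $\partial(\Phi\omega)(\Xi;Z)$.

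The hard part will be showing that every Leibniz term with $S\ne\emptyset$ cancels after antisymmetrizing over $\sigma\in S_q$. Each such term carries, via the infinitesimal version of Eq.~(\ref{AlgRepEqns}), a factor of $\dot\rho_1(x_{\sigma(\ell),0})\circ\phi$ (or an iterated version thereof computed from the semidirect-product formulas (\ref{ArrowsProduct}) and (\ref{ArrowsBracket})), where $x_{\sigma(\ell),0}\in\gg$ is the $\gg$-component of the leading entry of $\xi_{\sigma(\ell)}$ viewed in $\gg_1^{p+1}$; these factors multiply partial derivatives of $\omega$ along the remaining slots, with the $\ell$th row evaluated at the identity of $\G_p$. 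The required cancellations should be engineered by the partitions of $S_q$ and the sign identity for $|\sigma|$ recorded earlier in this subsection, pairing each nonempty-$S$ contribution against one from a related partition class with the opposite sign. As the author warns at the opening of this subsection, completing this bookkeeping reduces to a long and unenlightening computation.
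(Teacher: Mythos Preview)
Your treatment of the $k\geq 1$ faces is exactly the paper's: each $\partial_k$ is a group homomorphism, Eq.~(\ref{expFaces}) applies slot-by-slot, and these terms assemble directly into $\sum_{k=1}^{p+1}(-1)^k(\Phi\omega)(\hat\partial_k\Xi;Z)$. Your observation that $\overrightarrow{R}_{\varrho(Z)}$ commutes past the entire $k=0$ term is also correct.

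The gap is in your proposed mechanism for disposing of the $S\neq\emptyset$ Leibniz terms. You claim these cancel after antisymmetrizing over $\sigma\in S_q$ via the partition combinatorics, but this cannot be right: test it at $q=1$. There $S_q$ has one element, so there is nothing to antisymmetrize against, yet the $S=\{1\}$ term is
\[
\Bigl(\frac{d}{d\lambda_1}\Big\vert_{\lambda_1=0}\rho_0^1(i(g_1))^{-1}\Bigr)\,(\overrightarrow{R}_Z\omega)(1_{\G_p}),
\]
which is generically nonzero for an arbitrary smooth cochain. No sign identity will rescue you.

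The paper does not use antisymmetrization here at all. It peels off the $\lambda_j$'s one at a time by Leibniz; at each step the term where the derivative hits the representation factor $\rho_0^1(i(g_j))^{-1}$ leaves $(\overrightarrow{R}_Z\omega)$ evaluated with the corresponding $\G_p$-slot at $\exp(0)=1_{\G_p}$, and this vanishes outright (one works in the normalized subcomplex, as is standard in van Est theory and implicit in all of the surrounding proofs). So only the clean term survives at each step, and one obtains
\[
\overrightarrow{R}_\Xi\overrightarrow{R}_Z(\partial\omega)=\sum_{j=0}^{p+1}(-1)^j\overrightarrow{R}_{\hat\partial_j\Xi}\overrightarrow{R}_Z\omega
\]
\emph{before} summing over $\sigma$; the proposition then follows immediately. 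You were one observation away---you even wrote ``with the $\ell$th row evaluated at the identity of $\G_p$''---but that is already the reason the term dies, no partition bookkeeping required.
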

\begin{proof}
Let $\Xi=(\xi_1,...,\xi_q)\in\gg_{p+1}^q$ and $Z\in\gg^r$. Writting  
$\exp_{\G_{p+1}}(\xi)=(e^\xi_1,...,e^\xi_{p+1})\in\G_{p+1}\leq\G^{p+1}$, 
for $\xi\in\gg_{p+1}$, define
\begin{align*}
g_j & :=pr_G\Big{(}e^{\lambda_j\xi_j}_1\vJoin e^{\lambda_{j+1}\xi_{j+1}}_1\vJoin\cdots\vJoin e^{\lambda_{q-1}\xi_{q-1}}_1\vJoin e^{\lambda_q\xi_q}_1\Big{)}\in G, & \textnormal{for } & 1\leq j\leq q.   
\end{align*}
Using Eq.~(\ref{expFaces}),
\begin{align*}
\overrightarrow{R}_{\Xi}\overrightarrow{R}_Z(\partial\omega) & =\frac{d^I}{d\lambda_I}\rest{\lambda_I=0}\Big{[}\rho_0^1(i(g_1))^{-1}(\overrightarrow{R}_Z\omega)(\partial_0\exp(\lambda_I\cdot\Xi))+\sum_{j=1}^{p+1}(-1)^j(\overrightarrow{R}_Z\omega)(\partial_j\exp(\lambda_I\cdot\Xi))\Big{]} \\
	& =\sum_{j=1}^{p+1}(-1)^j\overrightarrow{R}_{\hat{\partial}_j\Xi}\overrightarrow{R}_Z\omega+\frac{d^I}{d\lambda_I}\rest{\lambda_I=0}\rho_0^1(i(g_1))^{-1}(\overrightarrow{R}_Z\omega)(\exp(\lambda_I\cdot\hat{\partial}_0\Xi)).
\end{align*}
Now,
\begin{align*}
\frac{d}{d\lambda_1}\rest{\lambda_1=0}\rho_0^1(i(g_1))^{-1}(\overrightarrow{R}_Z\omega)(\exp(\lambda_I\cdot & \hat{\partial}_0\Xi))=\rho_0^1(i(g_2))^{-1}R_{\hat{\partial}_0\xi_1}(\overrightarrow{R}_Z\omega)\big{(}\exp(\lambda_2\hat{\partial}_0\xi_2),...,\exp(\lambda_q\hat{\partial}_0\xi_q)\big{)}+ \\
	& +\Big{(}\frac{d}{d\lambda_1}\rest{\lambda_1=0}\rho_0^1(i(g_1))^{-1}\Big{)}(\overrightarrow{R}_Z\omega)\big{(}\exp(0),\exp(\lambda_2\hat{\partial}_0\xi_2),...,\exp(\lambda_q\hat{\partial}_0\xi_q)\big{)};
\end{align*}
thus, inductively,
$\overrightarrow{R}_\Xi\overrightarrow{R}_Z(\partial\omega)=\sum_{j=0}^{p+1}(-1)^j\overrightarrow{R}_{\hat{\partial}_j\Xi}\overrightarrow{R}_Z\omega$, and as a consequence,
\begin{align*}
\Phi(\partial\omega)(\Xi;Z) & =\sum_{\sigma\in S_q}\sum_{\varrho\in S_r}\sum_{j=0}^{p+1}(-1)^j\abs{\sigma}\abs{\varrho}\overrightarrow{R}_{\sigma(\hat{\partial}_j\Xi)}\overrightarrow{R}_{\varrho(Z)}\omega=\sum_{j=0}^{p+1}(-1)^j\Phi\omega(\hat{\partial}_j\Xi;Z)=\partial(\Phi\omega)(\Xi;Z).
\end{align*}

\end{proof}

\begin{theorem}\label{q-pag}
For constant $q$, $\Phi$ restricts to a map of double complexes.
\end{theorem}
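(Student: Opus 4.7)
The plan is to reduce Theorem~\ref{q-pag} to the compatibility results already established earlier in this subsection. For fixed $q$, the $(p,r)$-bigraded spaces $C^{p,q}_\bullet(\G,\phi)$ and $C^{p,q}_\bullet(\gg_1,\phi)$ form genuine double complexes whose horizontal differential is $\partial$ in the $p$-direction and whose vertical differentials are $\delta'$ (from $r=0$ to $r=1$) and $\delta_{(1)}$ (for $r \geq 1$) in the $r$-direction. The only pair of building differentials in the triple grid that fails to commute strictly is $\partial$ and $\delta$, as observed in Subsection~\ref{sss-Cxs}; the pair $(\partial,\delta_{(1)})$ commutes on the nose, so the $q$-page really is a double complex on both sides.

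Given this, verifying that $\Phi$ is a chain map between these two double complexes amounts to checking two square-commutativity conditions. Horizontal compatibility $\Phi\circ\partial=\partial\circ\Phi$ is exactly the content of Proposition~\ref{partial}. Vertical compatibility splits by the value of $r$: at $r=0$, the first vertical differential is $\delta'$, and $\Phi\circ\delta'=\delta'\circ\Phi$ is Lemma~\ref{delta'}; for $r\geq 1$, the vertical differential is $\delta_{(1)}$, and $\Phi\circ\delta_{(1)}=\delta_{(1)}\circ\Phi$ is Proposition~\ref{delta1}.

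Since these three identities exhaust the required square commutations in a $q$-page, Theorem~\ref{q-pag} follows immediately. There is essentially no obstacle beyond bookkeeping: the computational work has already been absorbed by the three cited results, which in turn rely only on the fact that face maps and final-target maps are group homomorphisms (so that exponentials intertwine them), on Lemma~\ref{Multilin} for distributing $\lambda$-derivatives across multilinear arguments, and on the standard partitioning of symmetric groups $S_n=\bigcup_{a} S_{n-1}(m\vert a)$ used to match terms on the two sides.
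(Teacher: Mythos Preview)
Your argument is correct and matches the paper's own (implicit) proof: the paper simply records Theorem~\ref{q-pag} immediately after Proposition~\ref{partial}, relying on the same three compatibility results you cite together with the commutativity of $\partial$ and $\delta_{(1)}$. One minor bookkeeping point: Proposition~\ref{partial} is stated only for $r>0$, so for the horizontal compatibility at $r=0$ you should also invoke the unlabeled lemma in the ``Vector space coefficients'' subsection (the commutativity of the top and bottom faces of~(\ref{r=0Cube})), which handles exactly that case.
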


{\bf Difference maps} - We conclude this Section by proving that the van Est 
map $\Phi$ (\ref{vanEstMap}) commutes with the difference maps. We restrict 
our attention to the difference maps necessary to prove that $\Phi$ defines 
a map of complexes up to degree $2$.

Under the isomorphisms of Subsection~\ref{sss-Equiv}, 
$\xi=(\xi_1,...,\xi_p)\in\gg_p$ corresponds to a unique element 
$(x_1,...,x_p;y)\in\gg^p\oplus\hh$, where each individual $\xi_m\in\gg_1$ 
corresponds to $(x_m,y_m)=(x_m,y+\mu(x_{m+1}+...+x_p))\in\gg\oplus\hh$. In 
the upcoming computations, we often need to consider $\xi$ as the sum
\begin{align}\label{BreakXi}
\xi & =(X_1^r,0)+(0,\hat{\partial}_0^r\xi)=(x_1,...,x_r,0,...,0;0)+(0,...,0,x_{r+1},...,x_p;y),
\end{align}
where $X_1^r:=(x_1,...,x_r)\in G^r$. Here, we abused notation and wrote an 
$=$ sign to mean the image under the isomorphism of Subsection 
\ref{sss-Equiv}. Since the inclusion of $G$ in $\G$ is a group homomorphism, 
and the full nerve of the Lie $2$-group lies in the category of Lie groups, 
\begin{align}\label{degMaps}
\exp_{\G_p}(0,\hat{\partial}_0^r\xi)& =(1,\exp_{\G_{p-r}}(\hat{\partial}_0^r\xi))=(1,\partial_0^r\exp_{\G_{p-r}}(\xi)) \\ \exp_{\G_p}(0,...,0,x,0,...,0;0) & =(1,...,1,\exp_G(x),1,...,1;1)\in G^p\times H\cong\G_p. \nonumber
\end{align} 
In the latter equation, if $x\in\gg$ is in the $k$th position, so is 
$\exp_G(x)\in G$. Note that, for $1<r\leq p$, none of the inclusions of 
$G^r$ in $\G_p$ is a Lie group homomorphism; hence, there is no relation 
analog to (\ref{degMaps}) when there is more than one non-zero entry.

\begin{proposition}\label{Delta}
Let $r>0$ and $\omega\in C^{p,q}_r(\G,\phi)$, then
\begin{align*}
\Phi(\Delta\omega)=\Delta(\Phi\omega)\in C^{p+1,q+1}_{r-1}(\gg_1,\phi).
\end{align*}
\end{proposition}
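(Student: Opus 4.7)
The identity is pointwise, so I would expand both sides on a fixed $\Xi = (\xi_0, \dots, \xi_q) \in \gg_{p+1}^{q+1}$ and $Z = (z_1, \dots, z_{r-1}) \in \gg^{r-1}$, and verify that the mixed directional derivatives defining $\Phi(\Delta\omega)$ reproduce the combinatorial formula (\ref{AlgHigherDiffMaps}) for $\Delta_1(\Phi\omega)$. Writing $\xi_i = (x_i^0, \dots, x_i^p; y_i)$ via Subsection~\ref{sss-Equiv} and substituting $\gamma_i = \exp_{\G_{p+1}}(\lambda_i \xi_i)$ and $f_k = \exp_G(\tau_k z_k)$ into the group-side formula for $\Delta_{1,1}\omega$, the central tools are Eq.~(\ref{expFaces}) intertwining face maps with exponentials, the decomposition (\ref{BreakXi}) that isolates the ``first-column'' components $x_i^0$, and the partition $S_{q+1} = \bigcup_{a=0}^{q} S_q(0|a)$ exploited in Propositions~\ref{delta1} through \ref{partial}.

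I would split into the cases $r = 1$ and $r > 1$, since the group-side formula for $\Delta_{1,1}$ takes different shapes. For $r = 1$, the compact formula (\ref{Delta-ab}) has a single summand whose prefactor $\rho_0^0(t_p(\partial_0\gamma_1)\cdots t_p(\partial_0\gamma_{q+1}))$ equals the identity at $\lambda = 0$; its Leibniz variations only contribute against $\omega$ evaluated at degenerate arguments, which alternate to zero under the residual $S_q$-action on the $q$ upstairs slots. The surviving contribution comes from differentiating $g_{11} = \exp_G(\lambda_0 x_0^0) + O(\lambda_0^2)$ in the single $z$-slot, with $\partial_0 \delta_0 \vec{\gamma}$ reducing by Eq.~(\ref{expFaces}) to $\exp_{\G_p^q}(\lambda \cdot \hat{\partial}_0 \Xi(a))$. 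The partition $S_{q+1} = \bigcup_a S_q(0|a)$ introduces the signs $(-1)^a$ and repackages the remaining derivatives as $\Phi\omega$ on the reduced argument, producing $\sum_{a=0}^{q} (-1)^a \phi(\Phi\omega(\hat{\partial}_0 \Xi(a); x_a^0))$, which is exactly $\Delta_1(\Phi\omega)(\Xi)$.

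For $r > 1$ the same selection principle applied to the first summand $\rho_0^1(i(g_{11}^{h_{21}\cdots h_{(q+1)1}}))^{-1} \omega(\partial_0\delta_0\vec{\gamma}; (\vec{f})^{h_{11}}, g_{11})$ produces $\sum_a (-1)^a \Phi\omega(\hat{\partial}_0\Xi(a); x_a^0, Z)$, once the orderings of the $z$-slots are reconciled by the alternation of $\omega$. The telescoping sum $\sum_{n=1}^{r-1}(-1)^{n+1}\big(\omega(\cdot; c_{2n-1}(\vec{f}; \gamma_{11})) - \omega(\cdot; c_{2n}(\vec{f}; \gamma_{11}))\big)$ I expect to vanish under $\Phi$: the pair $c_{2n-1}, c_{2n}$ differs only in the last coordinate ($f_{r-1}^{h_{11}} g_{11}$ versus $g_{11}$), and their mixed $\tau$- and $\lambda_0$-derivatives yield repeated slots of $\omega$ which cancel upon symmetrization over $S_{r-1}$, in the spirit of Lemma~\ref{delk}.

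The main obstacle will be the bookkeeping for $r > 1$: the conjugations $f_k^{h_{11}}$ and $g_{11}^{h_{21}\cdots h_{(q+1)1}}$ couple the $\tau$- and $\lambda$-variables, so their derivatives must be unpacked via Lemma~\ref{Multilin}, and the signs from the $(-1)^{n+1}$ alternation tracked through the sums over $S_{q+1}$ and $S_{r-1}$. Verifying that the ``spurious'' contributions coming from differentiating the prefactor $\rho_0^1(i(g_{11}^{h_{21}\cdots h_{(q+1)1}}))^{-1}$, which would otherwise produce extra $\dot{\rho}_0^1(\mu(\cdot))$ factors, cancel in pairs so that the final expression is $\phi$-free (matching the algebra-side $\Delta_1$) is the delicate point.
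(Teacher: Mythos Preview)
Your plan for the case $r=1$ is essentially the paper's argument: split the first argument via Eq.~(\ref{BreakXi}), kill the $\Xi_2$ piece because $c_{1,1}(\Xi_2)=1$, show the Leibniz contributions from the prefactor die on degenerate arguments, and repackage via the partition $S_{q+1}=\bigcup_a S_q(0\vert a)$.

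For $r>1$, however, there is a genuine gap. Your expectation that the whole sum $\sum_{n=1}^{r-1}(-1)^{n+1}\big(\omega(\cdot;c_{2n-1})-\omega(\cdot;c_{2n})\big)$ vanishes under $\Phi$ is wrong. The $c_{2n}$ terms do vanish (they are constant in $\tau_{r-1}$), but the $c_{2n-1}$ terms survive and are \emph{essential}. The first summand $\omega(\partial_0\delta_0\vec{\gamma};(\vec f)^{h_{11}},g_{11})$ places $g_{11}$ in the last $G$-slot, so after differentiation it only produces the permutations in $S_r$ of $(x_j^0,z_1,\dots,z_{r-1})$ that keep $x_j^0$ in position $r$; that is only $(r-1)!$ of the $r!$ terms needed to build $\Phi\omega(\hat\partial_0\Xi(j);x_j^0,Z)$. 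The missing insertions of $x_j^0$ at positions $1,\dots,r-1$ come precisely from the $c_{2n-1}$ terms: in $c_{2n-1}$ the slot $r-n$ is $g_{11}^{-1}$, and after the forced distribution of the $\tau$-derivatives the $\lambda_0$-derivative lands there, yielding (with a sign from the inverse) the tuple $(z_1,\dots,z_{r-n-1},x_j^0,z_{r-n},\dots,z_{r-1})$. Summing over $n$ and then over $\varrho'\in S_{r-1}$ reconstructs the full $S_r$-sum via the partition $S_r=\bigcup_n S_{r-1}(\cdot\vert n)$. Your heuristic that ``$c_{2n-1}$ and $c_{2n}$ differ only in the last coordinate, giving repeated slots that cancel under $S_{r-1}$'' does not hold: the $\tau_{r-1}$-derivative of $f_{r-1}^{h_{11}}g_{11}$ is nonzero while that of $g_{11}$ is zero, so the pair does not produce cancelling contributions. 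You need to keep the $c_{2n-1}$ pieces and show they supply exactly the remaining insertion positions.
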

\begin{proof}
If $r=1$, let $\Xi=(\xi_0,...,\xi_q)^T\in\gg_{p+1}^{q+1}$. If 
$\xi_j\in\gg_{p+1}$ corresponds to 
$(x_j^0,...,x_j^p;y_j)\in\gg^{p+1}\oplus\hh$, using the convention of 
Eq.~(\ref{BreakXi}), write $\Xi=\Xi_1+\Xi_2$, where 
\begin{align}\label{brkAgain}
\Xi_1 & =\begin{pmatrix}
x_0^0\quad 0 \\
 \Xi(0)
\end{pmatrix} & \Xi_2 & =\begin{pmatrix}
0\quad\hat{\partial}_0\xi_0 \\
 \Xi(0) 
\end{pmatrix}.
\end{align}
Since $\overrightarrow{R}_\bullet(\Delta\omega)\in\bigotimes^{q+1}\gg_{p+1}^*\otimes V$, 
$\overrightarrow{R}_\Xi(\Delta\omega)=\overrightarrow{R}_{\Xi_1}(\Delta\omega)+\overrightarrow{R}_{\Xi_2}(\Delta\omega)$. 
Now, from Eq.~(\ref{degMaps}), $c_{11}(\Xi_2)=1$; hence, 
$\overrightarrow{R}_{\Xi_2}(\Delta\omega)=0$. On the other hand, 
letting $I'=\lbrace 1,...,q\rbrace$, 
\begin{align*}
\overrightarrow{R}_{\Xi_1}(\Delta\omega) & =\frac{d^I}{d\lambda_I}\rest{\lambda_I=0}\rho_0^0(t_p(\partial_0\exp(\lambda_1\xi_1))...t_p(\partial_0\exp(\lambda_q\xi_q)))\circ\phi\big{(}\omega(\partial_0\exp(\lambda_{I'}\cdot\Xi(0));\exp_G(\lambda_0x_0^0))\big{)} \\
	& =\frac{d^{I'}}{d\lambda_{I'}}\rest{\lambda_{I'}=0}\rho_0^0(t_p(\partial_0\exp(\lambda_1\xi_1))...t_p(\partial_0\exp(\lambda_q\xi_q)))\circ\phi\big{(}(R_{x_0^0}\omega)(\partial_0\exp(\lambda_{I'}\cdot\Xi(0)))\big{)}.
\end{align*}
Computing, for any $h\in H$ and letting $I''=\lbrace 2,...,q\rbrace$,
\begin{align*}
\frac{d}{d\lambda_1}\rest{\lambda_1=0}\rho_0^0(t_p(\partial_0\exp(\lambda_1\xi_1)) & h)\circ\phi\big{(}(R_{x_0^0}\omega)(\partial_0\exp(\lambda_{I'}\cdot\Xi(0)))\big{)}=\rho_0^0(h)\circ\phi\big{(}(R_{\hat{\partial}_0\xi_1}R_{x_0^0}\omega)(\partial_0\exp(\lambda_{I''}\cdot\Xi(0,1)))\big{)}+ \\
	& +\Big{(}\frac{d}{d\lambda_1}\rest{\lambda_1=0}\rho_0^0(t_p(\partial_0\exp(\lambda_1\xi_1))h)\Big{)}\circ\phi\big{(}(R_{x_0^0}\omega)(\partial_0\exp(0),\partial_0\exp(\lambda_{I''}\cdot\Xi(0,1)))\big{)};
\end{align*}
inductively implying 
$\overrightarrow{R}_{\Xi_1}(\Delta\omega)=\phi\big{(}\overrightarrow{R}_{\hat{\partial}_0\Xi(0)}R_{x_0^0}\omega\big{)}$. 
Using the partition $S_{q+1}=\bigcup_{j=0}^qS_q(0\vert j)$, one 
concludes
\begin{align*}
\Phi(\Delta\omega)(\Xi) & =\sum_{j=0}^q\sum_{\sigma'\in S_q(0\vert j)}(-1)^j\abs{\sigma'}\phi\big{(}\overrightarrow{R}_{\sigma'(\hat{\partial}_0\Xi(j))}R_{x_j^0}\omega\big{)}=\sum_{j=0}^q(-1)^j(\Phi\omega)(\hat{\partial}_0\Xi(j);x_j^0)=\Delta(\Phi\omega)(\Xi).
\end{align*}

If $r>1$, let $Z=(z_1,...,z_{r-1})\in\gg^{r-1}$ and $\Xi$ as before. This 
time round,
$\overrightarrow{R}_\bullet\overrightarrow{R}_Z(\Delta\omega)\in\bigotimes^{q+1}\gg_{p+1}^*\otimes W$;
thus, using Eq.~(\ref{brkAgain}), 
$\overrightarrow{R}_\Xi\overrightarrow{R}_Z(\Delta\omega)=\overrightarrow{R}_{\Xi_1}\overrightarrow{R}_Z(\Delta\omega)+\overrightarrow{R}_{\Xi_2}\overrightarrow{R}_Z(\Delta\omega)$.
The fact that $c_{11}(\Xi_2)=1$ remains; hence, 
$\overrightarrow{R}_{\Xi_2}(\Delta\omega)=0$. Now, the terms of type 
$c_{2n}$ are constant with respect to $\tau_{r-1}$ and thus vanish. Writing 
$\gamma(\lambda_0)$ for $(\exp_G(\lambda_0x_0^0)\quad 1)$, we compute 
\begin{align*}
\frac{d}{d\lambda_0} & \rest{\lambda_0=0}\frac{d^J}{d\tau_J}\rest{\tau_J=0}c_{2n-1}(\exp(\tau_J\cdot Z);\gamma(\lambda_0)) \\
	& =\frac{d}{d\lambda_0}\rest{\lambda_0=0}\frac{d^J}{d\tau_J}\rest{\tau_J=0}(\exp(\tau_J\cdot Z)_{[1,r-n)}^{i(\exp(\lambda_0x_0^0))},\exp(\lambda_0x_0^0)^{-1},\exp(\tau_J\cdot Z)_{[r-n,r-2]},\exp(\tau_{r-1}z_{r-1})\exp(\lambda_0x_0^0)) \\
	& =-\frac{d}{d\lambda_0}\rest{\lambda_0=0}\frac{d^J}{d\tau_J}\rest{\tau_J=0}(\exp(\tau_J\cdot Z)_{[1,r-n)}^{i(\exp(0))},\exp(\lambda_0x_0^0),\exp(\tau_J\cdot Z)_{[r-n,r-2]},\exp(\tau_{r-1}z_{r-1})\exp(0)).
\end{align*}
Ultimately, putting 
$\varrho'(Z)^{(j)}_{x_j^0}:=(z_{\varrho'(1)},...,z_{\varrho'(j-1)},x_j^0,z_{\varrho'(j)},...,z_{\varrho'(r-1)})$,
and using successively the partitions 
$S_{q+1}=\bigcup_{j=0}^qS_q(0\vert j)$ and $S_{r}=\bigcup_{n=0}^{r-1}S_{r-1}(j\vert n)$, 
we compute
\begin{align*}
\Phi(\Delta\omega)(\Xi;Z) & =\sum_{j=0}^q\sum_{\sigma'\in S_q(0\vert j)}\sum_{\varrho'\in S_{r-1}}(-1)^j\abs{\sigma'}\abs{\varrho'}\overrightarrow{R}_{\sigma'(\Xi(j))}R_{\xi_j}\overrightarrow{R}_{\varrho'(Z)}(\Delta\omega) \\
	& =\sum_{j=0}^q\sum_{\sigma'\in S_q(0\vert j)}\sum_{\varrho'\in S_{r-1}}\sum_{n=0}^{r-1}(-1)^{j+n}\abs{\sigma'}\abs{\varrho'}\overrightarrow{R}_{\sigma'(\partial_0\Xi(j))}\overrightarrow{R}_{\varrho'(Z)^{(j)}_{x_j^0}}\omega \\
	& =\sum_{j=0}^q\sum_{\sigma'\in S_q(0\vert j)}\sum_{\varrho\in S_r}(-1)^j\abs{\sigma'}\abs{\varrho}\overrightarrow{R}_{\sigma'(\partial_0\Xi(j))}\overrightarrow{R}_{\varrho(x_j^0,Z)}\omega =\sum_{j=0}^q(-1)^j(\Phi\omega)(\partial_0\Xi(j);x_j^0,Z),
\end{align*}
which yields $\Delta(\Phi\omega)(\Xi;Z)$ as desired.

\end{proof}

\begin{proposition}\label{Deltar1}
Let $r>1$ and $\omega\in C^{p,q}_r(\G,\phi)$, then
\begin{align*}
\Phi(\Delta_{r,1}\omega)\equiv0\in C^{p+r,q+1}_0(\gg_1,\phi).
\end{align*}
\end{proposition}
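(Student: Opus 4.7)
The strategy is to exploit the structural asymmetry of $(\Delta_{r,1}\omega)$: by Eq.~(\ref{most}) the $G^r$-argument $c_{r,1}(\vec{\gamma})=(g_{1r},\ldots,g_{11})$ depends exclusively on the first row $\gamma_1$ of $\vec{\gamma}\in\G_{p+r}^{q+1}$, while the middle argument $\partial_0^r\delta_0\vec{\gamma}$ depends only on the remaining rows, since $\delta_0$ drops the first row. So among the $q+1$ row-derivatives assembling $\overrightarrow{R}_{\sigma(\Xi)}$, only the one in $\lambda_1$ can vary $c_{r,1}$.

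The plan is to fix $\sigma\in S_{q+1}$ and, as in Eq.~(\ref{BreakXi}), decompose the first-row tangent vector as
\[
\xi_{\sigma(1)}=(X^r_{\sigma(1)},0)+(0,\hat{\partial}_0^r\xi_{\sigma(1)}),\qquad X^r_{\sigma(1)}:=(x^1_{\sigma(1)},\ldots,x^r_{\sigma(1)}),
\]
and to split $\overrightarrow{R}_{\xi_{\sigma(1)}}$ by linearity of differentiation at the identity. For the ``tail'' piece $(0,\hat{\partial}_0^r\xi_{\sigma(1)})$, Eq.~(\ref{degMaps}) will give $\exp_{\G_{p+r}}(\lambda_1(0,\hat{\partial}_0^r\xi_{\sigma(1)}))=(1,\exp_{\G_p}(\lambda_1\hat{\partial}_0^r\xi_{\sigma(1)}))$, whence $c_{r,1}\equiv(1,\ldots,1)$ and $\omega$ will be evaluated with every $G^r$-slot at the identity, killing the contribution by normalization of cochains. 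For the ``head'' piece $(X^r_{\sigma(1)},0)$, the plan is to further decompose $X^r_{\sigma(1)}=\sum_{k=1}^r(0,\ldots,0,x^k_{\sigma(1)},0,\ldots,0)$ and to apply Eq.~(\ref{degMaps}) to each summand, producing an exponential with a single non-trivial $G$-coordinate in position $k$; the resulting $c_{r,1}(\vec{\gamma})$ will then have at most one non-identity entry, so that for $r>1$ normalization again forces $\omega(\textnormal{middle};1,\ldots,\exp_G(\lambda_1 x^k_{\sigma(1)}),\ldots,1)$ to vanish identically in all of its remaining arguments.

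From this, it should follow that in every Leibniz summand of $\frac{d^{q+1}}{d\lambda_1\cdots d\lambda_{q+1}}\rest{\lambda=0}(\Delta_{r,1}\omega)(\exp(\lambda\cdot\sigma(\Xi)))$ the inner $\omega$-factor vanishes once the decomposition of $\xi_{\sigma(1)}$ is carried out, no matter how the remaining $\lambda_2,\ldots,\lambda_{q+1}$-derivatives distribute between the prefactor $\rho_0^0(\prod_j t_p(\partial_0^r\gamma_j))$ and the middle $\omega$-argument. Summing over $\sigma\in S_{q+1}$ will then yield $\Phi(\Delta_{r,1}\omega)\equiv 0$. The main bookkeeping hurdle I expect is precisely this Leibniz distribution between prefactor and $\omega$-factor; once one confirms uniformly that in every summand the inner $\omega$ is evaluated at a $G^r$-tuple with fewer than two non-identity entries, the conclusion should follow, in direct analogy with the ``$c_{1,1}(\Xi_2)=1$'' step in the proof of Proposition~\ref{Delta}.
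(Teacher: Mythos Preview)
Your proposal is correct and follows essentially the same route as the paper: both arguments decompose the first-row tangent direction as $(0,\hat{\partial}_0^r\xi)+\sum_{k=1}^r(0,\ldots,x^k,\ldots,0;0)$, invoke Eq.~(\ref{degMaps}) on each summand to see that $c_{r,1}$ acquires at most one non-identity $G$-entry, and conclude via normalization that the inner $\omega$-factor vanishes identically (so the prefactor and remaining derivatives are irrelevant). Your organization differs only cosmetically---you handle the permutation $\sigma$ explicitly and split head/tail in two steps, whereas the paper does the $(r{+}1)$-fold splitting of $\Xi$ in one go for a generic (unpermuted) $\Xi$---and you are in fact more explicit than the paper in invoking normalization, which the paper uses silently.
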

\begin{proof}
Let $\Xi=(\xi_0,...,\xi_q)^T\in\gg_{p+r}^{q+1}$ and 
$(x_0^1,...,x_0^{p+r};y_0)\in\gg^{p+r}\oplus\hh$ be the image of 
$\xi_0\in\gg_{p+r}$ under the isomorphism of Subsection~\ref{sss-Equiv}. 
Using the convention of Eq.~(\ref{BreakXi}), write $\Xi=\Xi_0+...+\Xi_r$, 
where 
\begin{align*}
\Xi_0 & =\begin{pmatrix}
0\quad\hat{\partial}_0^r\xi_0 \\
\Xi(0)
\end{pmatrix} & \Xi_k & =\begin{pmatrix}
0\cdots0\quad x_0^k\quad 0\cdots0 \\
\Xi(0)
\end{pmatrix},\quad\text{for }1\leq k\leq r.
\end{align*}
Since 
$\overrightarrow{R}_\bullet(\Delta_{r,1}\omega)\in\bigotimes^{q+1}\gg_{p+r}\otimes V$,
$\overrightarrow{R}_\Xi(\Delta_{r,1}\omega)=\overrightarrow{R}_{\Xi_0}(\Delta_{r,1}\omega)+...+\overrightarrow{R}_{\Xi_r}(\Delta_{r,1}\omega)$. 
It then follows from Eq.~(\ref{degMaps}) that
\begin{align*}
\overrightarrow{R}_{\Xi_0}(\Delta_{r,1}\omega)
	& =\frac{d^I}{d\lambda_I}\rest{\lambda_I=0}\rho_0^0(t_p(\partial_0^r\exp(\lambda_0\xi_0))...t_p(\partial_0^r\exp(\lambda_q\xi_q)))\circ\phi\big{(}\omega(\partial_0^r\delta_0\exp(\lambda_I\cdot\Xi_0);1,...,1)\big{)}=0,
\end{align*}
and, for $1\leq k\leq r$,
\begin{align*}
\overrightarrow{R}_{\Xi_k} & (\Delta_{r,1}\omega) \\
	& =\frac{d^I}{d\lambda_I}\rest{\lambda_I=0}\rho_0^0(t_p(\partial_0^r\exp(\lambda_1\xi_1))...t_p(\partial_0^r\exp(\lambda_q\xi_q)))\circ\phi\big{(}\omega(\partial_0^r\delta_0\exp(\lambda_I\cdot\Xi_k);1,...,\exp_G(\lambda_0x_0^k),...,1)\big{)}=0.
\end{align*}

\end{proof}

\begin{proposition}\label{Delta1r}
Let $r>1$ and $\omega\in C^{p,q}_r(\G,\phi)$, then
\begin{align*}
\Phi(\Delta_{1,r}\omega)=(-1)^{\frac{r(r+1)}{2}}\Delta_r(\Phi\omega)\in C^{p+1,q+r}_0(\gg_1,\phi).
\end{align*}
\end{proposition}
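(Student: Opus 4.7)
The plan is to proceed analogously to the proofs of Propositions~\ref{Delta} and~\ref{Deltar1}, exploiting the clean structural separation of $\Delta_{1,r}\omega$: the $\G_p^q$-argument of $\omega$ is $\partial_0\delta_0^r\vec\gamma$, depending only on the $\partial_0$-projections of the last $q$ rows of $\vec\gamma$, while the $G^r$-argument $c_{1,r}(\vec\gamma)$ depends only on the first-column entries of the first $r$ rows, through nested $h$-conjugations. I would first decompose each $\xi_j = (x_j^0, 0, \ldots, 0; 0) + (0, \hat\partial_0\xi_j)$ via Eq.~(\ref{BreakXi}) and expand $\overrightarrow{R}_{\sigma(\Xi)}(\Delta_{1,r}\omega)$ by multilinearity over the $q+r$ rows. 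A configuration placing a ``top'' row at a position $k>r$ vanishes identically by Eq.~(\ref{degMaps}), since its $x^0$-component is discarded by $\partial_0$, does not appear in $c_{1,r}$, and makes $t_p(\partial_0\gamma_k)=1$ in the $\rho_0^0$-prefactor, leaving $\lambda_k$ with nothing to differentiate.

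Focusing on the surviving configuration --- ``top'' rows in the first $r$ positions and ``rest'' rows in the last $q$ --- I would partition $S_{q+r}=\bigcup_{a_0<\ldots<a_{r-1}}\bigcup_{\varrho\in S_r}S_q(0\vert a_{\varrho(0)},\ldots,a_{\varrho(r-1)})$ and apply the sign formula $|\sigma|=(-1)^{r(r-1)/2-\sum a_i}|\sigma^{(r)}||\varrho|$. Since all $h_{k1}$ are identity at $\lambda=0$, the leading-order expansion of $c_{1,r}$ gives slot $k$ equal to $x_{\sigma(k)}^0$, so that the tuple assembled in the $G^r$-argument of $\omega$ is $(x_{a_{\varrho(0)}}^0,\ldots,x_{a_{\varrho(r-1)}}^0)$. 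Carrying out the remaining derivatives through $\partial_0\delta_0^r\vec\gamma$, while managing the prefactor $\rho_0^0(t_p(\partial_0\gamma_1)\cdots t_p(\partial_0\gamma_{q+r}))$ by an induction mirroring the one used in the proof of Proposition~\ref{Delta}, yields $\overrightarrow{R}_{\sigma^{(r)}(\hat\partial_0\Xi(a_0,\ldots,a_{r-1}))}$ applied to the resulting slice. Summing over $\sigma^{(r)}\in S_q$ and $\varrho\in S_r$ reassembles $\Phi\omega(\hat\partial_0\Xi(a_0,\ldots,a_{r-1});x_{a_0}^0,\ldots,x_{a_{r-1}}^0)$, and summing over $\{a_0<\ldots<a_{r-1}\}$ reproduces $\Delta_r(\Phi\omega)(\Xi)$ up to the global sign $(-1)^{r(r+1)/2}$.

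The main obstacle is twofold. First, the sign bookkeeping: the partition sign combined with the $(-1)^{\sum a_i}$ of the $\Delta_r$ formula naively yields only $(-1)^{r(r-1)/2}$, so an additional factor of $(-1)^r$ must be extracted from the explicit computation, most plausibly from the ordering of the $h$-conjugations in $c_{1,r}$ combined with the outside-in convention used in $\overrightarrow{R}_{\varrho(Z)}$. Second, controlling the ``non-clean'' Leibniz terms arising either when a derivative hits the $\rho_0^0$-prefactor or picks up an $h$-conjugation contribution across slots of $c_{1,r}$: these do not vanish pointwise, but must cancel after the alternation over $\sigma^{(r)}$ or reduce inductively to the clean term, exactly in the spirit of the argument sketched implicitly in the proof of Proposition~\ref{Delta}. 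Executing this uniformly in $r>1$ is one of the ``long and unfortunately unenlightening computations'' the author flags at the start of the subsection.
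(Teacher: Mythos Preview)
Your approach is essentially the paper's: split rows via Eq.~(\ref{BreakXi}), reduce to the configuration with ``top'' components in the first $r$ slots, compute that configuration as $\phi\big(\overrightarrow{R}_{\hat\partial_0\Xi(\ldots)}\overrightarrow{R}_{X^0}\omega\big)$ by the induction of Proposition~\ref{Delta}, then partition $S_{q+r}$. The only organizational difference is that the paper splits the first $r$ rows \emph{sequentially} (one at a time, leaving rows $r{+}1,\ldots,q{+}r$ intact), whereas you split all rows at once; these agree because for $k>r$ the ``top'' part is killed by both $\partial_0$ and $c_{1,r}$, so ``full'' and ``rest'' coincide there.

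Two comments on your flagged obstacles. Your sign worry is an indexing artifact: the paper indexes $\Xi=(\xi_1,\ldots,\xi_{q+r})$ from~$1$ and applies the partition formula with $m=1$, so $km+\tfrac{(k-1)k}{2}=r+\tfrac{r(r-1)}{2}=\tfrac{r(r+1)}{2}$ directly---no extra $(-1)^r$ needs to be manufactured from $c_{1,r}$. The ``non-clean Leibniz terms'' from the $\rho_0^0$-prefactor and the $h$-conjugations are handled exactly as you anticipate, by the same degeneracy-kills-a-slot induction from Proposition~\ref{Delta}; the paper simply writes ``as in the proof of Proposition~\ref{Delta}'' at that step.

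One small gap: you argue only that a ``top'' row at position $k>r$ vanishes, but you also need that a ``rest'' row at position $k\le r$ vanishes (otherwise many mixed configurations survive). This is precisely what the paper's sequential split establishes, showing at step $k$ that the $(0,\hat\partial_0\xi_k)$-choice forces $g_{k1}=1$ in $c_{1,r}$ and hence gives zero; in your simultaneous scheme you should state and verify this companion vanishing explicitly.
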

\begin{proof}
Let $\Xi=(\xi_1,...,\xi_{q+r})^T\in\gg_{p+1}^{q+r}$ and 
$(x_j^0,...,x_j^p;y_j)\in\gg^{p+1}\oplus\hh$ be the image of 
$\xi_j\in\gg_{p+1}$ under the isomorphism of Subsection~\ref{sss-Equiv}. 
For $\xi\in\gg_{p+1}$, write 
$(g_\xi^0,...,g_\xi^p;h_\xi)\in G^{p+1}\times H$ for the image of 
$\exp_{\G_{p+1}}(\xi)\in\G_{p+1}$ under the isomorphism of Subsection 
\ref{sss-Equiv}.

Making $\Xi=\Xi_1+\Xi_2$ in the manner of Eq.~(\ref{brkAgain}), 
$\overrightarrow{R}_\Xi(\Delta_{1,r}\omega)=\overrightarrow{R}_{\Xi_1}(\Delta_{1,r}\omega)+\overrightarrow{R}_{\Xi_2}(\Delta_{1,r}\omega)$
as  
$\overrightarrow{R}_\bullet(\Delta_{1,r}\omega)\in\bigotimes^{q+r}\gg_{p+1}\otimes V$.
Given that
\begin{align*}
\omega(\partial_0\delta_0^r\exp(\lambda_I\cdot\Xi_2);1^{h_{\lambda_2\xi_2}...h_{\lambda_{q+r}\xi_{q+r}}},(g_{\lambda_2\xi_2}^0)^{h_{\lambda_2\xi_2}...h_{\lambda_{q+r}\xi_{q+r}}},...,(g_{\lambda_{q+r-1}\xi_{q+r-1}}^0)^{h_{\lambda_{q+r}\xi_{q+r}}},g_{\lambda_{q+r}\xi_{q+r}}^0) & =0,
\end{align*}
$\overrightarrow{R}_{\Xi_2}(\Delta_{1,r}\omega)=0$; therefore, inductively 
using Eq.~(\ref{brkAgain}) on $\Xi(1,...,k)$, one ultimately concludes 
$\overrightarrow{R}_{\Xi}(\Delta_{1,r}\omega)=\overrightarrow{R}_{\Xi_{(r)}}(\Delta_{1,r}\omega)$,
where 
\begin{align*}
\Xi_{(r)}:=\begin{pmatrix}
x_1^0\quad 0 \\
\vdots \\
x_r^0\quad 0 \\
\Xi(1,...,r)
\end{pmatrix} ,
\end{align*}
and, as in the proof of Proposition~\ref{Delta}, 
\begin{align*}
\overrightarrow{R}_{\Xi_{(r)}}(\Delta_{1,r}\omega) & =\phi\big{(}\overrightarrow{R}_{\hat{\partial}_0\Xi(1,...,r)}\overrightarrow{R}_{X_{1,...,r}^0}\omega\big{)}, & \text{where } & X_{1,...,r}^0=(x_1^0,...,x_r^0)\in\gg^r.
\end{align*}
Partitioning 
$S_{q+r}=\bigcup_{a_1<...<a_r}\bigcup_{\varrho\in S_r}S_q(1\vert a_{\varrho(1)}...a_{\varrho(r)})$,
\begin{align*}
\Phi(\Delta_{1,r}\omega)(\Xi) & =\sum_{a_1<...<a_r}\sum_{\varrho\in S_r}\sum_{\sigma^{(r)}\in S_q(1\vert a_{\varrho(1)}...a_{\varrho(r)})}(-1)^{a_1+...+a_r+\frac{r(r+1)}{2}}\abs{\sigma^{(r)}}\abs{\varrho}\phi\big{(}\overrightarrow{R}_{\sigma^{(r)}(\hat{\partial}_0\Xi(1,...,r))}\overrightarrow{R}_{X_{a_{\varrho(1)},...,a_{\varrho(r)}}^0}\omega\big{)} \\
	& =\sum_{a_1<...<a_r}(-1)^{a_1+...+a_r+\frac{r(r+1)}{2}}\phi\big{(}(\Phi\omega)(\partial_0(\Xi(a_1,...,a_k));X_{a_1,...,a_r}^0)\big{)}=(-1)^{\frac{r(r+1)}{2}}\Delta_r(\Phi\omega)(\Xi).
\end{align*}
\end{proof}
All the previous results add up to the following theorem.
\begin{theorem}\label{itIs}
The van Est map $\Phi$ (\ref{vanEstMap}) induces a map of complexes 
\begin{align*}
\Phi & :\xymatrix{C_\nabla^n(\G,\phi) \ar[r] & C_\nabla^n(\gg_1,\phi)}
\end{align*}
between the complexes (\ref{Cxs}) for $n\leq 2$.
\end{theorem}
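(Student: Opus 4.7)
The plan is to verify the chain map identity $\Phi\circ\nabla=\nabla\circ\Phi$ on elements of total degree at most $2$ by assembling the constituent commutation results established earlier in this section. Writing both total differentials via Eq.~(\ref{GpNabla}) and Eq.~(\ref{AlgNabla}) as signed sums of their constituents along the three grading directions, it suffices to check the identity term-by-term for each pair of matching constituents at source triple-bidegrees $(p,q,r)$ with $p+q+r\leq 2$.

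First I would enumerate the constituents relevant in this range. In the $p$-direction the only map is $\partial$; in the $q$-direction, only $\delta$; in the $r$-direction, $\delta'$ (from the $V$-level to the $W$-level) and $\delta_{(1)}$; and, among difference maps, $\Delta_{1,1}$ at all source bidegrees $(p,q,r)$ with $r\geq 1$ in range, together with $\Delta_{2,1}$ and $\Delta_{1,2}$ acting on the single source bidegree $(0,0,2)$. For each of these, the required commutation with $\Phi$ is already proved: Proposition~\ref{partial} for $\partial$, Proposition~\ref{delta} for $\delta$, Proposition~\ref{delta1} for $\delta_{(1)}$, Lemma~\ref{delta'} for $\delta'$, Proposition~\ref{Delta} for $\Delta_{1,1}$, Proposition~\ref{Deltar1} for $\Delta_{2,1}$, and Proposition~\ref{Delta1r} for $\Delta_{1,2}$. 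The group-side map $\Delta_{2,1}$ has no counterpart on the algebra side, so the vanishing of $\Phi\circ\Delta_{2,1}$ furnished by Proposition~\ref{Deltar1} is precisely what is needed to match zero on that side. For $r=0$, commutation of $\Phi$ with $\partial$ and with $\delta$ follows from the classical groupoid van Est identity applied to $\G_p$ and to $\G$ respectively, as already observed in the vector space case.

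The one remaining item is sign bookkeeping: one verifies that the exponents $(-1)^p$ and $(-1)^{(a+1)(r+b+1)}$ in Eq.~(\ref{GpNabla}) combine to give the signs $1$, $(-1)^{p+q}$, and $1$ prescribed by Eq.~(\ref{AlgNabla}) in front of the algebra-side constituents $\partial$, $\delta+\delta_{(1)}$, and $\Delta_k$ respectively, after applying $\Phi$. The only subtle sign is the extra factor $(-1)^{r(r+1)/2}$ in Proposition~\ref{Delta1r}, which in our range only occurs for $r=2$ and equals $-1$; a direct comparison of the coefficients on both sides of Eq.~(\ref{GpNabla}) and Eq.~(\ref{AlgNabla}) at bidegree $(0,0,2)$ confirms that this factor is absorbed.

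The main obstacle is not conceptual but organisational: ensuring that every signed constituent in every source bidegree with $p+q+r\leq 2$ is accounted for, and tracking the sign conventions consistently across the two complexes. Remark~\ref{Incomplete} guarantees that no further difference maps enter the computation in our range, so the enumeration above is exhaustive and the chain map property in degrees $\leq 2$ follows by summing the verified identities.
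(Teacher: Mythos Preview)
Your proposal is correct and takes essentially the same approach as the paper, which records the theorem with the single sentence ``All the previous results add up to the following theorem.'' Your explicit enumeration of the relevant source bidegrees, the matching of each constituent $\partial$, $\delta$, $\delta'$, $\delta_{(1)}$, $\Delta_{1,1}$, $\Delta_{2,1}$, $\Delta_{1,2}$ to its corresponding commutation result, and the acknowledgement of the sign bookkeeping (including the extra $(-1)^{r(r+1)/2}$ from Proposition~\ref{Delta1r}) are all a more detailed unpacking of that one-line synthesis, but the strategy is identical.
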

\begin{remark}
Continuing Remark~\ref{Incomplete}, Theorem~\ref{itIs} extends to $n\leq 5$ 
for the difference maps made explicit in \cite{Angulo2:2020}. We abstain 
from presenting a proof because it lies outside of our application.
\end{remark}

\section{A Collection of van Est Type Theorems}\label{sec-Theos}
In this section, we prove a van Est type theorem relating the cohomology 
of Lie $2$-groups and Lie $2$-algebras:
\begin{theorem}\label{2-vanEstTheo}
Let $\G$ be a Lie $2$-group with associated crossed module 
$\xymatrix{G \ar[r] & H}$, Lie $2$-algebra $\gg_1$ and a representation on 
the $2$-vector space $\xymatrix{W \ar[r]^\phi & V}$. If $H$ and $G$ are both 
$k$-connected and the van Est map $\Phi$ (\ref{vanEstMap}) induces a map of 
complexes between the complexes (\ref{Cxs}) for $n\leq k+1$, then
\begin{align*}
\xymatrix{
\Phi:H_\nabla^n(\G ,\phi) \ar[r] & H_\nabla^n(\gg_1 ,\phi),
}
\end{align*}
is an isomorphism for $n\leq k$ and it is injective for $n=k+1$.
\end{theorem}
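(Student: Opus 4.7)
The plan is to formalize, with the coefficient bundle $\xymatrix{W \ar[r]^\phi & V}$ included, the argument sketched in the Introduction for the case $W=(0)$. By Proposition~\ref{ConeCoh} applied to the mapping cone triple discussed in Subsection~\ref{subsec-HomAlg}, it suffices to show that the total cohomology of the mapping cone triple of $\Phi$ vanishes in degrees $n\leq k$. Since this total cohomology coincides tautologically with the total cohomology of the grid (\ref{Cllpsed}), the whole problem reduces to a vanishing statement for a ``double complex of sorts'' that can nonetheless be filtrated by columns. Filtering (\ref{Cllpsed}) by columns produces a spectral sequence whose first page is $E_1^{p,q}=H^q_{tot}(\Phi_p)$, so, by Lemma~\ref{BelowDiag}, it will be enough to prove $H^q_{tot}(\Phi_p)=(0)$ for $q\leq k$ for every $p\geq 0$.

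To get at each $H^q_{tot}(\Phi_p)$, I would exploit that, by Theorems~\ref{p-pag} and \ref{q-pag}, the restriction $\Phi_p$ from (\ref{Phi_p}) is an honest map of double complexes in the $(q,r)$ variables. Its mapping cone double $C^{\bullet,\bullet}(\Phi_p)$ can be filtrated again by columns, this time in the $r$-direction. The $r$th column is the ordinary mapping cone of $\Phi_p\vert_r$, which, by inspection of the formula (\ref{vanEstMap}) and the definitions in Subsection~\ref{sss-Cxs}, is precisely the classical van Est map for the Lie group (equivalently, the transformation Lie groupoid over a point) $\G_p$ with values in the coefficient module $\bigwedge^r\gg^*\otimes W$ (or $V$ when $r=0$), carrying the appropriate representation induced by $\rho$. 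Under the canonical isomorphism $\G_p\cong G^p\times H$, the K\"unneth formula together with the hypothesis that $G$ and $H$ are both $k$-connected gives that $\G_p$ is $k$-connected. Theorem~\ref{Crainic-vanEst} (equivalently, Theorem~\ref{Crainic-vanEstRephrased}) then yields $H^q(\Phi_p\vert_r)=(0)$ for every $q\leq k$ and every $r\geq 0$.

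Thus the first page of the spectral sequence of $C^{\bullet,\bullet}(\Phi_p)$ filtered by $r$-columns satisfies $E_1^{q,r}=H^q(\Phi_p\vert_r)=(0)$ whenever $q\leq k$. In particular $E_1^{q,r}=(0)$ on the region $q+r\leq k$, so a second application of Lemma~\ref{BelowDiag} gives $H^n_{tot}(\Phi_p)=(0)$ for $n\leq k$, as needed. Feeding this back into the outer spectral sequence of (\ref{Cllpsed}) kills $E_1^{p,q}$ for $q\leq k$, and a final invocation of Lemma~\ref{BelowDiag} yields $H^n_{\nabla}(\Phi)=(0)$ for $n\leq k$. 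Proposition~\ref{ConeCoh} then delivers the desired isomorphisms for $n\leq k$ and injectivity at $n=k+1$.

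The main obstacle is conceptual rather than computational: (\ref{Cllpsed}) is \emph{not} an honest double complex, because the higher difference maps $\Delta_{a,b}$ with $a\geq 2$ contribute ``long'' arrows across several columns, and the hypothesis only supplies that $\Phi$ is a chain map up to degree $k+1$. One must therefore check carefully that the column filtration is still a filtration by subcomplexes in the relevant range of degrees, so that the associated spectral sequence is well-defined and Lemma~\ref{BelowDiag} applies. Once this bookkeeping is in place, the argument is essentially a two-step application of the Crainic--van Est theorem combined with K\"unneth.
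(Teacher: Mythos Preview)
Your outer architecture --- pass to the mapping cone triple, filter (\ref{Cllpsed}) by $p$-columns, reduce via Lemma~\ref{BelowDiag} to showing $H^n_{tot}(\Phi_p)=(0)$ for $n\leq k$, then invoke Proposition~\ref{ConeCoh} --- is exactly what the paper does. The gap is in the inner step, where you claim that the $r$th column of the mapping cone double of $\Phi_p$ ``is precisely the classical van Est map for the Lie group $\G_p$ with values in the coefficient module $\bigwedge^r\gg^*\otimes W$''. This identification is false. For $r>0$ the source of $\Phi_p\vert_r$ is $C(\G_p^q\times G^r,W)$, the complex of the \emph{transformation groupoid} $\G_p\ltimes G^r$ (see the $q$-direction in Subsection~\ref{sss-Cxs}), not $C(\G_p^q,\bigwedge^r\gg^*\otimes W)$; and inspection of (\ref{vanEstMap}) shows $\Phi$ differentiates in \emph{both} the $\G_p$ and the $G$ variables. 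The Lie algebroid of $\G_p\ltimes G^r$ is the action algebroid $\gg_p\ltimes G^r$, whose cochains are $C(G^r,\bigwedge^q\gg_p^*\otimes W)$, not $\bigwedge^q\gg_p^*\otimes\bigwedge^r\gg^*\otimes W$; so Theorem~\ref{Crainic-vanEstRephrased} applied to $\G_p\ltimes G^r$ does not land you where you need to be, and applied to $\G_p$ it does not start where you are.

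The paper repairs exactly this by factoring $\Phi_p=\Phi_{row}\circ\Phi_V$ through an intermediate double complex $C_{LA}(\gg_p\ltimes G,\phi)$ associated to the LA-groupoid $\hh\ltimes G$ (Propositions~\ref{Ver p-page} and \ref{(q,r)-VerLAdoubleCx}). The first factor $\Phi_V$ differentiates only in the $\G_p$-direction and \emph{is} a column-wise groupoid van Est map, so Theorem~\ref{Crainic-vanEstRephrased} gives Theorem~\ref{Ver p-vanEst}. The second factor $\Phi_{row}$ differentiates only in the $G$-direction and is a row-wise van Est map for $G$; here one meets a further subtlety you did not flag: the degree-zero piece in the $r$-direction is $V$ rather than $W$, with the non-standard first differential $\delta'$ of (\ref{Alg1st r}) and (\ref{Gp1st r}), so Crainic--van Est does not apply verbatim. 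The paper handles this with Lemmas~\ref{tweak0} and \ref{tweak1} (the ``$\phi$-related complexes'' trick), yielding Proposition~\ref{TweakedVanEst} and then Theorem~\ref{res-vanEst}. Composing the two gives Theorem~\ref{vanEst p-pages}, which is what your inner step actually needs.
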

Theorem~\ref{2-vanEstTheo} follows from the vanishing of the cohomology of 
the mapping cone of $\Phi$, which in turn, using the spectral sequence of 
the filtration of (\ref{Cllpsed}) by columns, follows from van Est type 
theorems that ensure the vanishing of its columns below the diagonal. 
Throughout, fix a Lie $2$-group $\G$ with Lie $2$-algebra $\gg_1$ and 
a representation $\rho$ on $\xymatrix{W \ar[r]^\phi & V}$.

\subsection{First approximation}\label{subsec-1stAprox}
Momentarily disregarding the full crossed module structure, consider only 
the Lie group $H$ acting to the right by Lie group automorphisms of $G$. In 
\cite{Angulo2:2020}, it is explained that $H\ltimes G$ has got the structure 
of a double Lie groupoid in the sense of Ehresmann \cite{Ehresmann:1963}, 
where the top groupoid is a Lie group bundle over $H$ and the left groupoid 
is the right action groupoid of $H$ over $G$. Furthermore, for $r>0$, the 
$p$-pages of $C^{p,q}_r(\G,\phi)$ can be thought of as naturally associated 
double complexes induced by a map of double Lie groupoids 
\begin{align}\label{DblRepn}
\xymatrix{
 & H\ltimes G \ar@<0.5ex>[dl]\ar@<-0.5ex>[dl]\ar@<0.5ex>[dd]\ar@<-0.5ex>[dd]\ar[r] & GL(W)\ltimes GL(W) \ar@<0.5ex>[dr]\ar@<-0.5ex>[dr]\ar@<0.5ex>[dd]\ar@<-0.5ex>[dd] & \\
H \ar[rrr]^{\rho_H\qquad\quad}\ar@<0.5ex>[dd]\ar@<-0.5ex>[dd] & & & GL(W) \ar@<0.5ex>[dd]\ar@<-0.5ex>[dd]   \\
 &             G \ar@<0.5ex>[dl]\ar@<-0.5ex>[dl]\ar[r]^{\rho_G\quad} & GL(W) \ar@<0.5ex>[dr]\ar@<-0.5ex>[dr] & \\       
 \ast \ar[rrr]                                                     & & & \ast ,                  
}	
\end{align} 
where $W$ is a vector space and the double Lie groupoid to the right is 
the one given by the right action by conjugation of $GL(W)$ on itself (cf. 
(\ref{p-pagRepn})).
 
Associated to a double Lie groupoid, there are two LA-groupoids 
\cite{Mackenzie:1992}, which are roughly given by passing the Lie functor in 
the vertical and the horizontal directions. In the case of $H\ltimes G$, 
these are respectively given by 
\begin{align}\label{LAGpds}
& \xymatrix{
\hh\ltimes G \ar@<0.5ex>[r]\ar@<-0.5ex>[r]\ar[d] & \hh \ar[d] \\       
G \ar@<0.5ex>[r]\ar@<-0.5ex>[r]                  & \ast   } & & \xymatrix{
H\ltimes \gg \ar@<0.5ex>[r]\ar@<-0.5ex>[r]\ar[d] & \gg \ar[d] \\       
H \ar@<0.5ex>[r]\ar@<-0.5ex>[r]                  & \ast .
}
\end{align}
Bear in mind that the notation $\ltimes$ stands alternatively for the 
transformation groupoid associated to an action of a Lie group and the 
action Lie algebroid associated to the action of a Lie algebra. 
Correspondingly, there are two morphisms of LA-groupoids which correspond to 
the differentiation of the map (\ref{DblRepn}) in each direction. The 
subsequent results say there are double complexes naturally associated to 
each of the latter derivatives. 
\begin{proposition}\label{Ver p-page}
Let 
$\xymatrix{\hh\ltimes G \ar[r] & \ggl(W)\ltimes GL(W):(y;g) \ar@{|->}[r] & (\rho_\hh(y),\rho_G(g))}$ 
be the differentiation of the map (\ref{DblRepn}) in the vertical direction. 
Then, for every $q\geq 0$, there is a representation $\rho_G^q$ of the Lie 
group bundle 
\begin{align}\label{verGpBdlRep}
 & \xymatrix{\hh^q\times G \ar@<0.5ex>[r] \ar@<-0.5ex>[r] & \hh^q} & \text{on } & \xymatrix{\hh^q\times W \ar[r] & \hh^q},
\end{align} 
and for every $r\geq 0$, there is a representation $\rho^r_\hh$ of the 
action Lie algebroid 
\begin{align}\label{verActAlgbdRep}
 & \xymatrix{\hh\ltimes G^r \ar[r] & G^r} & \text{on } & \xymatrix{G^r\times W \ar[r] & G^r}
\end{align}
such that the grid 
\begin{align}\label{protoVerDbl}
\xymatrix{ \vdots & \vdots & \vdots &  \\ 
\bigwedge^3\hh^*\otimes W \ar[r]^{\partial\quad}\ar[u] & C(G,\bigwedge^3\hh^*\otimes W) \ar[r]^{\partial}\ar[u] & C(G^2,\bigwedge^3\hh^*\otimes W) \ar[r]\ar[u] & \dots \\
\bigwedge^2\hh^*\otimes W \ar[r]^{\partial\quad}\ar[u]^{\delta} & C(G,\bigwedge^2\hh^*\otimes W) \ar[r]^{\partial}\ar[u]^{\delta} & C(G^2,\bigwedge^2\hh^*\otimes W) \ar[r]\ar[u]^{\delta} & \dots \\
\hh^*\otimes W  \ar[r]^{\partial\quad}\ar[u]^{\delta} & C(G,\hh^*\otimes W) \ar[r]^{\partial}\ar[u]^{\delta} & C(G^2,\hh^*\otimes W) \ar[r]\ar[u]^{\delta} & \dots \\
W \ar[r]^{\partial\quad}\ar[u]^{\delta} & C(G,W) \ar[r]^{\partial}\ar[u]^{\delta} & C(G^2,W) \ar[r]\ar[u]^{\delta} & \dots }
\end{align}
whose rows are the subcomplexes of alternating $q$-multilinear Lie groupoid 
cochains with values in $\rho_G^q$ and whose columns are Lie algebroid 
complexes with values on $\rho_\hh^r$ is a double complex.
\end{proposition}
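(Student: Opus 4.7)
The plan is to extract both families of representations from the vertical derivative of (\ref{DblRepn}) and then verify that the rows and columns of (\ref{protoVerDbl}) interlock into a double complex. The vertical derivative unpacks as a Lie group representation $\rho_G\colon G\to GL(W)$, a Lie algebra representation $\rho_\hh\colon\hh\to\ggl(W)$, and an equivariance identity linking the two through the $H$-action on $G$ (equivalently, its differentiated $\hh$-action by derivations).

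For $\rho_G^q$, I would take the fiberwise $\rho_G$-action on the $W$-factor, twisted by the dual of the $\hh^q$-dependent data coming from the $G$-action on $\hh$ read off from the crossed module, so that the Lie group bundle representation axioms reduce to $\rho_G$ being a Lie group morphism. For $\rho^r_\hh$, I would mimic (\ref{q-rep}): declare $\rho_\hh^r(y;\vec g)$ to act by $\rho_\hh(y)$ on $W$ minus the infinitesimal contribution of $y$ on each factor of $\vec g\in G^r$ coming from the $\hh$-action on $G$. Flatness of the resulting Lie algebroid connection then follows from $\rho_\hh$ being a Lie algebra morphism together with $\hh$ acting by derivations.

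With these definitions in place, the row at height $r$ is, by construction, the subcomplex of alternating multilinear Lie groupoid cochains of $\hh^q\times G\rightrightarrows\hh^q$ with values in $\rho_G^q$, and the column above $q$ is the Chevalley--Eilenberg complex of the action Lie algebroid $\hh\ltimes G^q\to G^q$ with values in $\rho_\hh^r$; each is a cochain complex by the standard constructions.

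The main obstacle is commutativity of each $(r,q)$-square. I would expand $\partial\delta\omega$ and $\delta\partial\omega$ on a generic $\omega\in C(G^q,\bigwedge^r\hh^*\otimes W)$ using the formulas (\ref{LGpdDifferential}) and (\ref{CEDiff}), and pair up terms: the face-map contributions and the alternating-sum contributions match combinatorially, while the remaining action terms cancel by the infinitesimal equivariance identity, which is precisely the statement that (\ref{DblRepn}) is a map of LA-groupoids. This identity is the essential input, and once it is invoked, the square closes and (\ref{protoVerDbl}) is a double complex.
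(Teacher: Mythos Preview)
Your overall architecture---extract the two representations from the vertical LA-groupoid map, identify rows and columns with the standard groupoid and algebroid cochain complexes, then check the generic square by expanding both $\partial\delta\omega$ and $\delta\partial\omega$ and cancelling via the infinitesimal equivariance $\frac{d}{d\lambda}\big\vert_{\lambda=0}\rho_G(g^{\exp(\lambda y)})=\rho_G(g)\rho_\hh(y)-\rho_\hh(y)\rho_G(g)$---is exactly the paper's approach, and the commutativity argument you sketch is the right one.

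There is, however, a genuine error in how you specify $\rho_G^q$. You propose to twist the fibrewise $\rho_G$-action by ``the dual of the $\hh^q$-dependent data coming from the $G$-action on $\hh$ read off from the crossed module''. There is no $G$-action on $\hh$ here: the only structural action in play is the right action of $H$ on $G$ (and its differential, an $\hh$-action on $G$ by derivations). The correct definition is the trivial one, $\rho_G^q:=pr_G^*\rho_G$, so that $(Y;g)\cdot w=\rho_G(g)w$ with no dependence on $Y\in\hh^q$ whatsoever. This is not cosmetic: the square-commutativity computation uses repeatedly that $\rho_G^{q+1}(Y;g_0)=\rho_G^q([y_m,y_n],Y(m,n);g_0)=\rho_G(g_0)$, which fails for any nontrivial $\hh^q$-twist, and the very fact that alternating multilinear cochains form a subcomplex rests on $\partial$ acting through $\rho_G(g_0)$ alone. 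Your description of $\rho_\hh^r$ is essentially correct once recast as the pull-back $\hat t_r^*\rho_\hh$ along the final target of the action algebroid (the ``minus infinitesimal contribution of $y$ on each factor of $\vec g$'' is precisely the anchor part of the algebroid differential). Also note a minor index swap in your paragraph identifying rows and columns: rows are indexed by $q$ and columns by $r$, so the column complex is for $\hh\ltimes G^r\to G^r$, not $\hh\ltimes G^q$.
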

\begin{proof}
Since (\ref{DblRepn}) is a map of double groupoids, $\rho_\hh\times\rho_G$ 
is a map of LA-groupoids and its restrictions to the base Lie group and to 
the side Lie algebra give respectively representations $\rho_G$ of $G$ and 
$\rho_\hh$ of $\hh$, both on $W$. These are the representations for the 
group bundle over a point 
$\xymatrix{G \ar@<0.5ex>[r] \ar@<-0.5ex>[r] & \ast}$ and for the trivial 
action Lie algebroid $\xymatrix{\hh\ltimes\ast \ar[r] & \ast}$. For each 
$q>0$, define $\rho_G^q :=pr_G^*\rho_G$, which is a representation of 
the group bundle (\ref{verGpBdlRep}) as the projection onto $G$ is a Lie 
groupoid homomorphism. Analogously, for each $r>0$, define 
$\rho_\hh^r:=\hat{t}_r^*\rho_\hh$, where 
$\hat{t}_r:(\hh\times G)_r\cong\xymatrix{\hh\times G^r \ar[r] & \hh}$ is 
the final target map of the LA-groupoid and hence a Lie algebroid map.

Since for any group bundle, $(\hh^q\times G)_r\cong\hh^q\times G^r$, 
$C^r(\hh^q\times G;\hh^q\times W)\cong C(\hh^q\times G^r,W)$, one can make 
sense of the subspace of alternating $q$-multilinear in the 
$\hh$-coordinates $r$-cochains of (\ref{verGpBdlRep}) with values on 
$\rho_G^q$,  
\begin{align*}
C^r_{lin}(\hh^q\times G,W) & :=\lbrace\omega\in C^r(\hh^q\times G;\hh^q\times W):\omega(-;\vec{g})\in\bigwedge^q\hh^*\otimes W,\forall\vec{g}\in G^r\rbrace .
\end{align*}
$C^\bullet_{lin}(\hh^q\times G,W)$ is a subcomplex of 
$C^\bullet(\hh^q\times G;\hh^q\times W)$. Indeed, for $Y\in\hh^q$ and 
$\vec{g}=(g_0,...,g_r)\in G^{r+1}$, 
$\partial_k(Y;\vec{g})=(Y;\delta_k\vec{g})$, where $\delta_k$ is the $k$th
face map in the nerve of the Lie group $G$. Thus, for 
$\omega\in C^r_{lin}(\hh^q\times G,W)$,
\begin{align*}
\partial\omega(Y;\vec{g}) & =\rho_G^q(Y;g_0)\omega(Y;\delta_0\vec{g})+\sum_{k=1}^{r+1}(-1)^k\omega(Y;\delta_k\vec{g}),
\end{align*}
but $\rho_G^q(Y;g_0)=\rho_G(g_0)$; hence, $\partial\omega(-;\vec{g})$ is a 
linear combination of alternating $q$-multilinear maps. Since by definition 
\begin{align*}
C^r_{lin}(\hh^q\times G,W)=C(G^r,\bigwedge^q\hh^*\otimes W)=\Gamma\Big{(}\bigwedge^q(\hh\ltimes G^r)^*\otimes(G^r\times W)\Big{)},
\end{align*}
for fixed $(q,r)$, the spaces of $q$-multilinear $r$-cochains of the Lie 
groupoid (\ref{verGpBdlRep}) with values on $\rho_G^q$ and of $q$-cochains 
of the Lie algebroid (\ref{verActAlgbdRep}) with values on $\rho_\hh^r$ 
coincide. 

We are left to prove that the generic square
\begin{align}\label{generic Ver p-square}
\xymatrix{ C(G^r,\bigwedge^{q+1}\hh^*\otimes W) \ar[r]^{\partial}            & C(G^{r+1},\bigwedge^{q+1}\hh^*\otimes W)    \\
C(G^r,\bigwedge^q\hh^*\otimes W) \ar[r]_{\partial}\ar[u]^{\delta} & C(G^{r+1},\bigwedge^q\hh^*\otimes W) \ar[u]_{\delta}  }
\end{align}
commutes. Given that the brackets of all the Lie algebroids involved are 
completely determined by the bracket of $\hh$, we restrict to prove the 
commutativity of (\ref{generic Ver p-square}) for constant sections. Let 
$\omega\in C(G^r,\bigwedge^q\hh^*\otimes W)$, $Y=(y_0,...,y_q)\in\hh^{q+1}$ 
and $\vec{g}$ as above. Then, 
\begin{align*}
\partial(\delta\omega)(Y;\vec{g}) 
              & =\rho_G^{q+1}(Y;g_0)\Big{[}\sum_{j=0}^{q}(-1)^{j+1}\rho_\hh^{r}(y_j)\omega(Y(j);\delta_0\vec{g})+\sum_{m<n}(-1)^{m+n}\omega([y_m,y_n],Y(m,n);\delta_0\vec{g})\Big{]}+ \\
			  & \qquad +\sum_{k=1}^{r+1}(-1)^{k}\Big{[}\sum_{j=0}^{q}(-1)^{j+1}\rho_\hh^{r}(y_j)\omega(Y(j);\delta_k\vec{g})+\sum_{m<n}(-1)^{m+n}\omega([y_m,y_n],Y(m,n);\delta_k\vec{g})\Big{]},										   
\end{align*}
and
\begin{align*}
\delta(\partial & \omega)(Y;\vec{g})=\sum_{j=0}^{q}(-1)^{j+1}\rho_\hh^{r+1}(y_j)\Big{[}\rho_G^{q}(Y(j);g_0)\omega(Y(j);\delta_0\vec{g})+ \sum_{k=1}^{r+1}(-1)^{k}\omega(Y(j);\delta_k\vec{g})\Big{]}+ \\
              & +\sum_{m<n}(-1)^{m+n}\Big{[}\rho_G^{q}([y_m,y_n],Y(m,n);g_0)\omega([y_m,y_n],Y(m,n);\delta_0\vec{g})+\sum_{k=1}^{r+1}(-1)^{k}\omega([y_m,y_n],Y(m,n);\delta_k\vec{g})\Big{]}.
\end{align*}
The equality follows from noticing the following identities: First, 
$\rho_G^{q+1}(Y;g_0)=\rho_G^q([y_m,y_n],Y(m,n);g_0)=\rho_G(g_0)$. Next, for 
$(y;\vec{g})\in\hh\ltimes G^r$, $\hat{t}_r(y;\vec{g})=y$ and its anchor is 
by definition 
$\frac{d}{d\lambda}\rest{\lambda=0}(\vec{g})^{\exp(\lambda y)}\in T_{\vec{g}}G^r$.
Letting $\lbrace e_a\rbrace$ be a basis for $W$ and 
\begin{align*}
\omega(Y(j);\delta_k\vec{g})=\omega^a(Y(j);\delta_k\vec{g})e_a ,
\end{align*}
using the fact that for all $h\in H$ and all ranging value of $k$, 
$(\delta_k\vec{g})^h=\delta_k(\vec{g})^h$, we conclude
\begin{align*}
\rho_\hh^r(y_j)\omega(Y(j);\delta_k\vec{g})=\omega^a(Y(j);\delta_k\vec{g})\rho_\hh(y_j)e_a+\Big{(}\frac{d}{d\lambda}\rest{\lambda=0}\omega^a\big{(}Y(j);\delta_k(\vec{g})^{\exp(\lambda y_j)}\big{)}\Big{)}e_a=\rho_\hh^{r+1}(y_j)\omega(Y(j),\delta_k\vec{g}).
\end{align*}
Finally, as $\rho_\hh\times\rho_G$ is an LA-groupoid map, the following 
diagram whose vertical maps are the anchors commutes,
\begin{align*}
\xymatrix{
TG \ar[rr]^{d\rho_G\qquad}                              & & TGL(W) \\
\hh\ltimes G \ar[rr]_{\rho_\hh\times\rho_G\qquad}\ar[u] & & \ggl(W)\ltimes GL(W); \ar[u] 
}
\end{align*}
therefore, 
$\frac{d}{d\lambda}\rest{\lambda=0}\rho_G(g^{\exp(\lambda y)})=\rho_G(g)\rho_\hh(y)-\rho_\hh(y)\rho_G(g)$.
Writing $\omega^a=\omega^a(Y(j);\delta_0\vec{g})$, we compute
\begin{align*}
\rho_G^{q+1}(Y;g_0)\rho_\hh^r(y_j)\omega(Y(j);\delta_0\vec{g}) & =\rho_G(g_0)\Bigg{[}\omega^a\rho_\hh(y_j)e_a+\Big{(}\frac{d}{d\lambda}\rest{\lambda=0}\omega^a\big{(}Y(j);(\delta_0\vec{g})^{\exp(\lambda y_j)}\big{)}\Big{)}e_a\Bigg{]};
\end{align*}
while on the other hand, $\rho_G^{q}(Y;g_0)\omega(Y(j);\delta_0\vec{g})=\omega^a\rho_G(g_0)e_a$ and
\begin{align*}
\rho & _\hh^{r+1}(y_j)\rho_G^{q}(Y;g_0)\omega(Y(j);\delta_0\vec{g})=\omega^a\rho_\hh(y_j)\rho_G(g_0)e_a+\Big{(}\frac{d}{d\lambda}\rest{\lambda=0}\omega^a\big{(}Y(j);(\delta_0\vec{g})^{\exp(\lambda y_j)}\big{)}\rho_G\big{(}g_0^{\exp(\lambda y_j)}\big{)}\Big{)}e_a \\
  & =\omega^a\rho_\hh(y_j)\rho_G(g_0)e_a+\Big{(}\frac{d}{d\lambda}\rest{\lambda=0}\omega^a\big{(}Y(j);(\delta_0\vec{g})^{\exp(\lambda y_j)}\big{)}\Big{)}\rho_G(g_0)e_a+\omega^a(Y(j);\delta_0\vec{g})\Big{(}\frac{d}{d\lambda}\rest{\lambda=0}\rho_G\big{(}g_0^{\exp(\lambda y_j)}\big{)}\Big{)}e_a \\
  & =\omega^a\rho_\hh(y_j)\rho_G(g_0)e_a+\rho_G(g_0)\Big{[}\Big{(}\frac{d}{d\lambda}\rest{\lambda=0}\omega^a\big{(}Y(j);(\delta_0\vec{g})^{\exp(\lambda y_j)}\big{)}\Big{)}e_a+\omega^a\rho_\hh(y_j)e_a\Big{]}-\omega^a\rho_\hh(y_j)\rho_G(g_0)e_a,
\end{align*}
ultimately implying the commutativity of the square 
(\ref{generic Ver p-square}). 
\end{proof} 
We aim at approximating the cohomology of the map $\Phi_p$ of Theorem 
\ref{p-pag} assembling van Est maps from $C^{p,\bullet}_\bullet(\G,\phi)$ 
to the complexes of Proposition~\ref{Ver p-page}. Since, each $p$-page 
is induced by the double Lie groupoid map
\begin{align}\label{p-pagRepn}
\xymatrix{
\G_p\ltimes G \ar[r] & GL(W)\ltimes GL(W):(\gamma ;g) \ar@{|->}[r] & (\rho_0^1(t_p(\gamma))^{-1},\rho_0^1(i(g))) ,
}
\end{align} 
replacing the first column of maps by (\ref{Gp1st r}), we introduce a first 
column replacement for the associated double complex of 
Proposition~\ref{Ver p-page}.
\begin{lemma}\label{VerLA r-cx}
Let $\omega\in\bigwedge^q\gg_p^*\otimes V$, $\Xi\in\gg_p^q$ and $g\in G$. 
If $\partial'\omega(\Xi;g):=\rho_1(g)\omega(\Xi)$, then, using the notation 
of Proposition~\ref{Ver p-page}, 
\begin{align*}
\xymatrix{\bigwedge^q\gg_p^*\otimes V \ar[r]^{\partial'\quad} & C_{lin}^1(\gg_p^q\times G,W) \ar[r]^{\partial} & C_{lin}^2(\gg_p^q\times G,W)}
\end{align*}
is a complex.
\end{lemma}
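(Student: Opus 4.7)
The plan is a direct verification that $\partial\circ\partial'=0$, relying essentially on two identities from the definition of a Lie $2$-group representation: the ``exponential'' multiplicativity of $\rho_1$ given by Eq.~(\ref{GpRho1Homo}) and the compatibility relation $\rho_0^1(i(g))=I+\rho_1(g)\circ\phi$ from Eq.~(\ref{GpRepEqns}). First I would note that $\partial'$ actually lands in $C^1_{lin}(\gg_p^q\times G,W)$: for fixed $g\in G$, the map $\Xi\mapsto\rho_1(g)\omega(\Xi)$ is a composition of an alternating $q$-multilinear map with the linear operator $\rho_1(g)\in\ggl(\phi)_1$, and hence lies in $\bigwedge^q\gg_p^*\otimes W$ as required.

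Next I would spell out $\partial(\partial'\omega)$ using the Lie groupoid cochain formula~(\ref{LGpdDifferential}) specialized to the Lie group bundle $\gg_p^q\times G\to\gg_p^q$ of Proposition~\ref{Ver p-page}. Since the representation $\rho_G^q$ is the pullback of $\rho_G:g\mapsto\rho_0^1(i(g))$ along the projection onto $G$, for $\Xi\in\gg_p^q$ and $g_0,g_1\in G$, the expression reads
\begin{align*}
\partial(\partial'\omega)(\Xi;g_0,g_1)=\rho_0^1(i(g_0))\,\rho_1(g_1)\omega(\Xi)-\rho_1(g_0g_1)\omega(\Xi)+\rho_1(g_0)\omega(\Xi).
\end{align*}
Now I would substitute the two identities recalled above: expanding the middle term by Eq.~(\ref{GpRho1Homo}) yields
\begin{align*}
\rho_1(g_0g_1)=\rho_1(g_0)+\rho_1(g_1)+\rho_1(g_0)\circ\phi\circ\rho_1(g_1),
\end{align*}
while expanding the first term by Eq.~(\ref{GpRepEqns}) yields $\rho_0^1(i(g_0))\rho_1(g_1)=\rho_1(g_1)+\rho_1(g_0)\phi\rho_1(g_1)$. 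Collecting terms shows all contributions cancel pairwise, giving $\partial(\partial'\omega)=0$.

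There is really no obstacle here; the statement is essentially the ``degree-one'' manifestation of the fact, used repeatedly in Subsection~\ref{sss-Cxs} to set up the complex along the $r$-direction, that $\delta'$ composed with $\delta_{(1)}$ is zero at the first step of a group complex twisted by a Lie $2$-group representation. The only thing worth checking carefully is that the signs and the left/right conventions in the groupoid cochain complex match those of Eq.~(\ref{Gp1st r}) and (\ref{LGpdDifferential}), but once these are lined up the cancellation is immediate from the two algebraic identities above. I would write up the computation in a single displayed equation after the preliminary multilinearity check.
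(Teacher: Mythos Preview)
Your proposal is correct and follows essentially the same route as the paper: both verify that $\partial'$ is well-defined, write out $\partial(\partial'\omega)(\Xi;g_0,g_1)$ using $\rho_G^q(\Xi;g_0)=\rho_0^1(i(g_0))$, and then cancel terms via Eq.~(\ref{GpRho1Homo}) and the identity $\rho_0^1(i(g_0))=I+\rho_1(g_0)\circ\phi$ from Eq.~(\ref{GpRepEqns}). The paper's write-up is simply a terser version of exactly the computation you outline.
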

\begin{proof}
First, notice that $\partial'$ is well-defined. Then, letting 
$g_0,g_1\in G$, if follows from Eq.'s~(\ref{GpRho1Homo}) and 
(\ref{GpRepEqns}) that
\begin{align*}
\partial(\partial'\omega)(\Xi;g_0,g_1) & =\rho_G^{q}(\Xi;g_0)\rho_1(g_1)\omega(\Xi)-\rho_1(g_0g_1)\omega(\Xi)+\rho_1(g_0)\omega(\Xi) \\
	& =\rho_0^1(i(g_0))\rho_1(g_1)\omega(\Xi)-(I+\rho_1(g_0)\circ\phi)\rho_1(g_1)\omega(\Xi)=0.
\end{align*} 
\end{proof}
\begin{proposition}\label{(q,r)-VerLAdoubleCx}
For each $p\geq 0$, define $C_{LA}(\gg_p\ltimes G,\phi)$ to be
\begin{align}\label{VerLADbl}
\xymatrix{ \vdots & \vdots & \vdots &  \\ 
\bigwedge^3\gg_p^*\otimes V \ar[r]^{\partial'\quad} \ar[u] & C(G,\bigwedge^3\gg_p^*\otimes W) \ar[r]^{\partial}\ar[u] & C(G^2,\bigwedge^3\gg_p^*\otimes W) \ar[r]\ar[u] & \dots \\
\bigwedge^2\gg_p^*\otimes V \ar[r]^{\partial'\quad}\ar[u]^{\delta} & C(G,\bigwedge^2\gg_p^*\otimes W) \ar[r]^{\partial}\ar[u]^{\delta} & C(G^2,\bigwedge^2\gg_p^*\otimes W) \ar[r]\ar[u]^{\delta} & \dots \\
\gg_p^*\otimes V  \ar[r]^{\partial'\quad}\ar[u]^{\delta} & C(G,\gg_p^*\otimes W) \ar[r]^{\partial}\ar[u]^{\delta} & C(G^2,\gg_p^*\otimes W) \ar[r]\ar[u]^{\delta} & \dots \\
V \ar[r]^{\delta'}\ar[u]^{\delta} & C(G,W) \ar[r]^{\delta_{(1)}}\ar[u]^{\delta} & C(G^2,W)	\ar[r]\ar[u]^{\delta} & \dots , }
\end{align}
where the maps in the first column are given by Lemma~\ref{VerLA r-cx} 
and the rest by Proposition~\ref{Ver p-page}. Then 
$C_{LA}(\gg_p\ltimes G,\phi)$ is a double complex, the vertical LA-double 
complex of $C^{p,\bullet}_\bullet(\G,\phi)$.
\end{proposition}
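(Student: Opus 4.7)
The plan is to decompose the grid (\ref{VerLADbl}) into three pieces -- the sub-grid with columns $r\geq 1$, which fits into the framework of Proposition \ref{Ver p-page}; the new leftmost column of $V$-valued Chevalley-Eilenberg cochains; and the leftmost entry $V$ of the bottom row with its special map $\delta'$ -- and verify that each is a (sub)complex and that the squares along which the new data meet the main region commute.

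For the main region, I would adapt Proposition \ref{Ver p-page} verbatim by replacing $\hh$ with $\gg_p$: the double Lie groupoid structure of $\G_p\ltimes G$ together with the representation (\ref{p-pagRepn}) differentiates in the vertical direction to an LA-groupoid map $\gg_p\ltimes G\to\ggl(W)\ltimes GL(W)$, whose side pieces yield the representation $\rho_G=\rho_0^1\circ i$ of $G$ on $W$ and the representation $\dot{\rho}_0^1\circ\hat{t}_p$ of $\gg_p$ on $W$. The same argument then shows that the sub-grid of (\ref{VerLADbl}) consisting of the columns with $r\geq 1$ is a double complex. The leftmost column of (\ref{VerLADbl}) is the Chevalley-Eilenberg complex of $\gg_p$ with coefficients in $V$ under $\rho_p=\hat{t}_p^*\rho_0^0$, which is manifestly a complex. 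The identity $\partial\circ\partial'=0$ is Lemma \ref{VerLA r-cx}, and the analogous identity $\delta_{(1)}\circ\delta'=0$ at the bottom-left corner is a two-line direct computation combining $\rho_0^1(i(g))=I+\rho_1(g)\circ\phi$ with the cocycle formula for $\rho_1$, both from (\ref{GpRepEqns}) and (\ref{GpRho1Homo}).

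The hard part, and the main obstacle, is the commutativity of the new squares connecting the first column to the second column of (\ref{VerLADbl}): for $q\geq 1$ the identity $\delta\circ\partial'=\partial'\circ\delta$, and for $q=0$ its degenerate analog $\delta\circ\delta'=\partial'\circ\delta$ at the bottom-left corner. The key ingredient is the infinitesimal compatibility relation
\[
\frac{d}{d\lambda}\rest{\lambda=0}\rho_1(g^{\exp(\lambda y)})=-\dot{\rho}_0^1(y)\rho_1(g)+\rho_1(g)\dot{\rho}_0^0(y),\qquad y\in\hh,\ g\in G,
\]
obtained by differentiating $\rho_1(g^h)=\rho_0^1(h)^{-1}\rho_1(g)\rho_0^0(h)$ from (\ref{GpRepEqns}) at $h=1$. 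Expanding $\delta(\partial'\omega)(\Xi;g)$ into its pointwise contribution $\dot{\rho}_0^1(\hat{t}_p(\xi_j))\rho_1(g)\omega(\Xi(j))$, the anchor contribution $\frac{d}{d\lambda}\rest{\lambda=0}\rho_1(g^{\exp(\lambda\hat{t}_p(\xi_j))})\omega(\Xi(j))$, and the usual bracket terms, the identity above causes the two $\dot{\rho}_0^1$-contributions to cancel pointwise, leaving only $\rho_1(g)\dot{\rho}_0^0(\hat{t}_p(\xi_j))\omega(\Xi(j))$. Since $\rho_1(g)$ acts linearly, it factors out of the bracket terms as well, and the sum collapses to $\rho_1(g)(\delta\omega)(\Xi)=\partial'(\delta\omega)(\Xi;g)$; the $q=0$ bottom-left square is the same computation with $\omega=v\in V$. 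This single compatibility identity and its row-wise consequences supply all the new content of the proposition beyond the adaptation of Proposition \ref{Ver p-page}, Lemma \ref{VerLA r-cx}, and routine complex checks.
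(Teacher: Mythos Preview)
Your proposal is correct and follows essentially the same approach as the paper: both reduce the statement to the commutativity of the squares linking the first column to the second, and both establish that commutativity via the identity obtained by differentiating $\rho_1(g^h)=\rho_0^1(h)^{-1}\rho_1(g)\rho_0^0(h)$ at $h=1$, which the paper packages as $\rho_{\gg_p}^1(\xi)\rho_1(g)=\rho_1(g)\dot{\rho}_0^0(\hat{t}_p(\xi))$ and you write out as the cancellation of the $\dot{\rho}_0^1$-terms. Your write-up is slightly more explicit about the routine pieces (adapting Proposition~\ref{Ver p-page} to $\gg_p$, the bottom-left $\delta_{(1)}\circ\delta'=0$), but the substantive content is identical.
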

\begin{proof}
The proof reduces to show that, in the first column, all squares commute. 
For a constant section $\xi\in\Gamma(\gg_p\ltimes G)$ and $g\in G$, one has 
\begin{align*}
\rho_{\gg_p}^1(\xi)\rho_1(g) & =\rho_{\gg_p}(\xi)\rho_1(g)+\frac{d}{d\lambda}\rest{\lambda=0}\rho_1(g^{t_p(\exp(\lambda\xi))})\\
                   & =\dot{\rho}_0^1(\hat{t}_p(\xi))\rho_1(g)+\frac{d}{d\lambda}\rest{\lambda=0}\rho_0^1(t_p(\exp(\lambda\xi)))^{-1}\rho_1(g)\rho_0^0(t_p(\exp(\lambda\xi)) \\
                   & =\dot{\rho}_0^1(\hat{t}_p(\xi))\rho_1(g)-\dot{\rho}_0^1(\hat{t}_p(\xi))\rho_1(g)+\rho_1(g)\dot{\rho}_0^0(\hat{t}_p(\xi))=\rho_1(g)\dot{\rho}_0^0(\hat{t}_p(\xi)).
\end{align*}
If $q=0$, let $v\in V$, then 
$\delta(\delta'v)(\xi;g)=\rho_{\gg_p}^1(\xi)\rho_1(g)v=\rho_1(g)\dot{\rho}_0^0(\hat{t}_p(\xi))v=\partial'(\delta v)(\xi;g)$. 
If $q>0$, let $\omega\in\bigwedge^q\gg_p^*\otimes V$ and 
$\Xi=(\xi_0,...,\xi_q)\in\gg_p^{q+1}$, then
\begin{align*}
\delta(\delta'\omega)(\Xi;g) & =\sum_{j=0}^{q}(-1)^j\rho_{\gg_p}^{1}(\xi_j)\rho_1(g)\omega(\Xi(j))+\sum_{m<n}(-1)^{m+n}\rho_1(g)\omega([\xi_m,\xi_n],\Xi(m,n)) \\
	& =\rho_1(g)\Big{(}\sum_{j=0}^{q}(-1)^{j+1}\dot{\rho}_0^0(\hat{t}_p(\xi_j))\omega(\Xi(j))+\sum_{m<n}(-1)^{m+n}\omega([\xi_m,\xi_n],\Xi(m,n))\Big{)}=\partial'(\delta\omega)(\Xi;g).
\end{align*}
\end{proof}
Let 
\begin{align}\label{VerVanEstMap}
\xymatrix{\Phi_V:C^{p,\bullet}_\bullet(\G,\phi) \ar[r] & C_{LA}(\gg_p\ltimes G,\phi)}
\end{align}
be defined by assembling column-wise van Est maps
\begin{align*}
 & \xymatrix{\Phi_V^0:C(\G_p^q,V) \ar[r] & \bigwedge^q\gg_p^*\otimes V} & \text{and }\qquad & \xymatrix{\Phi_V^r:C(\G_p^\bullet\times G^r,W) \ar[r] & C(G^r,\bigwedge^\bullet\gg_p^*\otimes W).} 
\end{align*}
To describe $\Phi_V^r$ explicitly consider a $q$-cochain 
$\omega\in C(\G_p^q\times G^r,W)$, $q$ sections 
$\xi_1,...,\xi_q\in\Gamma(\gg_p\ltimes G^r)$ and let $\lbrace u_a\rbrace$ 
be a basis for $\gg_p$. Then, writing $(\xi_j)_{\vec{g}}=\xi_j^a(\vec{g})u_a$ 
for $\vec{g}\in G^r$, the right-invariant vector field 
$\vec{\xi}_j\in\mathfrak{X}(\G_p\ltimes G^r)$ associated to $\xi_j$ is 
given by 
\begin{align*}
(\vec{\xi}_j)_{(\gamma;\vec{g})}: & =\frac{d}{d\lambda_a}\rest{\lambda_a=0}\xi_j^a((\vec{g})^{t_p(\gamma)})(\exp_{\G_p}(\lambda_au_a);(\vec{g})^{t_p(\gamma)})\Join(\gamma;\vec{g}) \\
		 & =\xi_j^a((\vec{g})^{t_p(\gamma)})\frac{d}{d\lambda_a}\rest{\lambda_a=0}(\gamma\vJoin\exp_{\G_p}(\lambda_au_a);\vec{g})=\xi_j^a((\vec{g})^{t_p(\gamma)})dL_\gamma(u_a)\in T_\gamma\G_p\leq T_\gamma\G_p\oplus T_{\vec{g}}G^r.
\end{align*}
Consequently, if $\gamma_2,...,\gamma_q\in\G_p$, 
$(R_{\xi_q}\omega)(\gamma_2,...,\gamma_q;\vec{g})=\xi_q^a((\vec{g})^{t_p(\gamma_q)...t_p(\gamma_2)})\frac{d}{d\lambda_a}\rest{\lambda_a=0}\omega(\exp_{\G_p}(\lambda_au_a),\gamma_2,...,\gamma_q;\vec{g})$, 
and
\begin{align*}
R & _{\xi_{q-1}}(R_{\xi_q}\omega)(\gamma_3,...,\gamma_q;\vec{g})=\xi_{q-1}^b((\vec{g})^{t_p(\gamma_q)...t_p(\gamma_3)})\frac{d}{d\lambda_{b}}\rest{\lambda_{b}=0}(R_{\xi_q}\omega)(\exp_{\G_p}(\lambda_{b}u_b),\gamma_3,...,\gamma_q;\vec{g}) \\
 & =\xi_{q-1}^b((\vec{g})^{t_p(\gamma_q)...t_p(\gamma_3)})\frac{d}{d\lambda_{b}}\rest{\lambda_{b}=0}\Big{(}\xi_q^a((\vec{g})^{t_p(\gamma_q)...t_p(\gamma_3)\exp(\lambda_{b}u_b)})\frac{d}{d\lambda_a}\rest{\lambda_a=0}\omega(\exp(\lambda_au_a),\exp(\lambda_bu_b),\gamma_3,...,\gamma_q;\vec{g})\Big{)} \\
 & =(\xi_{q-1}^b\xi_q^a)((\vec{g})^{t_p(\gamma_q)...t_p(\gamma_3)})\frac{d}{d\lambda_{b}}\rest{\lambda_{b}=0}\frac{d}{d\lambda_a}\rest{\lambda_a=0}\omega(\exp_{\G_p}(\lambda_au_a),\exp_{\G_p}(\lambda_bu_b),\gamma_3,...,\gamma_q;\vec{g}).
\end{align*}
Inductively, for $\Xi=(\xi_1,...,\xi_q)$, 
$\overrightarrow{R}_\Xi=(\xi_1^{a_1}...\xi_q^{a_q})(\vec{g})\frac{d^I}{d\lambda_{I}}\rest{\lambda_I=0}\omega(\exp_{\G_p}(\lambda_1u_{a_1}),...,\exp_{\G_p}(\lambda_qu_{a_q});\vec{g})$ and 
\begin{align}\label{PhiVr}
(\Phi_V^r\omega)(\Xi;\vec{g}) & =\sum_{\sigma\in S_q}\abs{\sigma}\frac{d^I}{d\lambda_{I}}\rest{\lambda_I=0}\omega\big{(}\exp(\lambda_I\cdot\sigma(\Xi_{\vec{g}}));\vec{g}\big{)}.
\end{align}

\begin{proposition}
The map $\Phi_V$ (\ref{VerVanEstMap}) is a map of double complexes. 
\end{proposition}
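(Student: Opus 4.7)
The plan is to verify separately that $\Phi_V$ commutes with the vertical differentials $\delta$ (the Chevalley--Eilenberg direction for $\gg_p$ on both sides) and with the horizontal differentials ($\delta_{(1)}$ on the source, $\partial$ and $\partial'$ on the target). Both verifications follow the pattern already laid out in Section~\ref{sec-TheMap}, with the simplification that, unlike $\Phi$, the map $\Phi_V$ differentiates only in the $\G_p$-direction and treats the $G^r$ coordinates as spectators; this keeps the combinatorics close to the classical van Est identity.

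For the vertical squares, I would first observe that for each fixed $\vec{g}\in G^r$, the cochain $(\Phi_V^r\omega)(-;\vec{g})$ is precisely the image of $\omega(-;\vec{g})\in C(\G_p^q,W)$ under the classical van Est map of the Lie group $\G_p$. Under the representation~(\ref{q-Rep}) restricted to fixed $\vec{g}$, the source $\delta$ becomes the group cochain differential of $\G_p$ with coefficients twisted by $\rho_0^1\circ t_p$, and the target $\delta$ is the Chevalley--Eilenberg differential of $\gg_p$ with the differentiated representation. Commutativity then reduces to the classical van Est identity, which can be extracted from the proof of Proposition~\ref{delta} by specializing to sections that are constant in $G^r$ and discarding the $\overrightarrow{R}_Z$ operators.

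For the horizontal squares, I would expand $\delta_{(1)}\omega(\vec{\gamma};g_0,\ldots,g_r)$ according to the Lie group bundle cochain formula for $\G_p^q\times G\rightrightarrows\G_p^q$ with the representation~(\ref{r-Rep}). All face-map terms except the first are insensitive to the conjugation by $t_p(\gamma_1)\cdots t_p(\gamma_q)$ and pass through $\Phi_V^{r+1}$ to match the corresponding face-map terms of $\partial(\Phi_V^r\omega)$. For the initial face contribution, I would distribute the $q$ derivatives via Leibniz between $\omega$ and the factor $\rho_0^1(i(g_0^{t_p(\gamma_1)\cdots t_p(\gamma_q)}))$: when all derivatives hit $\omega$, the conjugation collapses at $\vec{\gamma}=1$ (since $t_p(1)=1$), producing $\rho_0^1(i(g_0))\,(\Phi_V^r\omega)(\Xi;g_1,\ldots,g_r)=\rho_G^q(\Xi;g_0)\,(\Phi_V^r\omega)(\Xi;g_1,\ldots,g_r)$, exactly the first term of $\partial(\Phi_V^r\omega)$. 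The cross terms, in which at least one $\lambda_j$-derivative lands on the representation factor, produce a factor $\dot{\rho}_0^1(\hat{t}_p(\xi_{\sigma(j)}))$ composed with a lower-order derivative of $\omega$ whose $\sigma(j)$-th slot is evaluated at $1\in\G_p$; these must cancel after antisymmetrization over $\sigma\in S_q$. The bottom-row and first-column cases involving $\partial'$ are shorter: for $q=0$ both maps reduce to $v\mapsto\rho_1(g)v$, and for $q>0$ the same Leibniz argument applies with $\rho_1(g)$ in place of the twisted $\rho_0^1\circ i$, using (\ref{Gp1st r}) and Lemma~\ref{VerLA r-cx}.

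The main obstacle is the cancellation of the cross terms in the horizontal case. Concretely, one must verify that, once summed over $\sigma\in S_q$ with signs, every configuration in which one or more derivatives fall on $\rho_0^1(i(g_0^{t_p(\gamma_1)\cdots t_p(\gamma_q)}))$ rather than on $\omega$ contributes zero. I expect this to follow from an antisymmetrization argument of the same flavour as Lemma~\ref{delk}, using that $\hat{t}_p$ is a Lie algebra homomorphism, together with the compatibility relations~(\ref{GpRepEqns}) between $\rho_0^0$, $\rho_0^1$, and $\rho_1$. Once this cancellation is established, assembling the vertical and horizontal commutativities yields the proposition.
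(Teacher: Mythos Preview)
Your overall architecture is sound and matches the paper: verify vertical and horizontal commutativity separately, with the horizontal case handled by Leibniz-expanding the representation factor. Two points, however, deserve correction.

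First, the vertical direction is much cheaper than you make it. The paper dispatches it in a single sentence: each $\Phi_V^r$ is \emph{by construction} the groupoid van~Est map for the transformation groupoid $\G_p\ltimes G^r\rightrightarrows G^r$, so compatibility with the $\delta$-differentials is built in. Your reduction via fixing $\vec{g}$ is not quite right as stated, because both the source $\delta$ (for the action groupoid) and the target $\delta$ (for the action Lie algebroid) genuinely move $\vec{g}$; the argument cannot be literally ``the Lie group van~Est identity with $\vec{g}$ as a parameter''. Adapting Proposition~\ref{delta} would work, but it is unnecessary labour.

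Second, and more substantively, your mechanism for killing the cross terms in the horizontal direction is wrong. You propose that the terms in which some $\lambda_j$-derivative lands on $\rho_0^1(i(g_0^{t_p(\gamma_1)\cdots t_p(\gamma_q)}))$ cancel after antisymmetrisation over $S_q$, invoking a Lemma~\ref{delk}-style pairing. This cannot work: already for $q=1$ there is a single permutation and nothing to cancel against, yet the cross term
\[
\Big(\frac{d}{d\lambda}\Big|_{\lambda=0}\rho_0^1\big(i(g_0^{t_p(\exp(\lambda\xi))})\big)\Big)\,\omega(1;\delta_0\vec{g})
\]
is present. The actual reason these terms vanish---and the one the paper uses, here and throughout Section~\ref{sec-TheMap} (see the inductive steps in Propositions~\ref{delta1} and~\ref{partial})---is that every cross term contains $\omega$ evaluated with at least one argument equal to the unit of $\G_p$, and the cochains are normalised (they vanish on degenerate simplices). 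With that observation the inductive collapse to $\rho_0^1(i(g_0))\overrightarrow{R}_\Xi\omega(\delta_0\vec{g})$ is immediate, before any sum over $S_q$. Lemma~\ref{delk} plays no role here; it is about producing bracket terms from the interior face maps, not about Leibniz cross terms. Replace the antisymmetrisation claim by the normalisation observation and the proof goes through.
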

\begin{proof}
Since by definition $\Phi_V$ defines a map of complexes when restricted to 
columns, we prove that it is compatible with the horizontal differentials 
in (\ref{VerLADbl}). 
For $r=0$, let $\omega\in C(\G_p^q,V)$, $\Xi=(\xi_1,...,\xi_q)\in\gg_p^q$ 
and $g\in G$, then
\begin{align*}
\overrightarrow{R}_{\Xi}\delta'\omega(g) & =\frac{d^I}{d\lambda_I}\rest{\lambda=0}\rho_0^1(t_p(\exp_{\G_p}(\lambda_1\xi_1)...\exp_{\G_p}(\lambda_q\xi_q)))^{-1}\rho_1(g)\omega(\exp(\lambda_I\cdot\Xi));
\end{align*}
inductively yielding 
$\overrightarrow{R}_{\Xi}\delta'\omega(g)=\rho_1(g)\overrightarrow{R}_{\Xi}\omega$. 
Taking the alternating sum over $S_q$, 
$\Phi_V^1\delta'\omega =\partial'\Phi_V^0\omega$. 
For $r>0$, let $\omega\in C(\G_p^q\times G^r,W)$, 
$\vec{g}=(g_0,...,g_r)\in G^{r+1}$ and $\Xi$ as above, then
\begin{align*}
\overrightarrow{R}_\Xi(\delta_{(1)}\omega)(\vec{g}) & =\frac{d^I}{d\lambda_{I}}\rest{\lambda_I=0}\rho_0^1\big{(}g_0^{t_p(\exp(\lambda_1\xi_1))...t_p(\exp(\lambda_1\xi_1))}\big{)}\omega(\exp(\lambda_I\cdot\Xi);\delta_0\vec{g})+\sum_{k=1}^{r+1}(-1)^{k}\omega(\exp(\lambda_I\cdot\Xi);\delta_k\vec{g}). 
\end{align*}
Inductively, 
$\frac{d^I}{d\lambda_{I}}\rest{\lambda_I=0}\rho_0^1\big{(}g_0^{t_p(\exp(\lambda_1\xi_1))...t_p(\exp(\lambda_1\xi_1))}\big{)}\omega(\exp(\lambda_I\cdot\Xi);\delta_0\vec{g})=\rho_0^1(i(g_0))\overrightarrow{R}_\Xi\omega(\delta_0\vec{g})$; 
hence, taking the alternating sum over $S_q$ and recalling 
$\rho_G^q(\Xi;g_0)=\rho_0^1(i(g_0))$,
\begin{align*}
\Phi_V^{r+1}(\delta_{(1)}\omega)(\Xi;\vec{g}) & =\rho_G^q(\Xi;g_0)\Phi_V^r\omega(\Xi;\delta_0\vec{g})+\sum_{k=1}^{r+1}(-1)^{k}\Phi_V^r\omega(\Xi;\delta_k\vec{g})=\partial(\Phi_V^r\omega)(\Xi;\vec{g}).
\end{align*}
\end{proof}
\begin{theorem}\label{Ver p-vanEst}
If $\G_p$ is $k$-connected, $H^n_{tot}(\Phi_V)=(0)$ for all degrees 
$n\leq k$.
\end{theorem}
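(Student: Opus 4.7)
The plan is to realize $\Phi_V$ column-wise as an assembly of Crainic-van Est maps and then to conclude by the spectral-sequence argument already encoded in Lemma~\ref{BelowDiag}. Concretely, for each fixed $r \geq 0$ I would identify the $r$-th column of the source double complex $C^{p,\bullet}_\bullet(\G,\phi)$ with the complex of Lie groupoid cochains of the transformation Lie groupoid $\G_p \ltimes G^r \rightrightarrows G^r$ valued on the trivial bundle $G^r \times W$ equipped with the representation (\ref{q-Rep}) (and, for $r=0$, simply with the group cohomology complex of $\G_p$ valued on $V$ via $t_p^*\rho_0^0$). Correspondingly, the $r$-th column of the target $C_{LA}(\gg_p \ltimes G,\phi)$ is the Chevalley-Eilenberg complex of the action Lie algebroid $\gg_p \ltimes G^r \to G^r$ with the infinitesimal representation that differentiates (\ref{q-Rep}). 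With these identifications, the formula (\ref{PhiVr})---after the computation of right-invariant vector fields on $\G_p \ltimes G^r$ made explicit just above (\ref{PhiVr})---says that $\Phi_V^r$ is precisely the Crainic-van Est map for this transformation groupoid.

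Next I would apply the van Est input: the source fibres of $\G_p \ltimes G^r \rightrightarrows G^r$ are all diffeomorphic to $\G_p$, so the hypothesis that $\G_p$ is $k$-connected ensures the source fibres are $k$-connected. Theorem~\ref{Crainic-vanEstRephrased} (which recovers the classical Theorem~\ref{vanEst} when $r=0$ and $G^r$ is a point) then yields $H^n(\Phi_V^r) = (0)$ for all $n \leq k$ and every $r \geq 0$. Finally I would invoke the spectral-sequence machinery of Subsection~\ref{subsec-HomAlg}: the mapping cone double of $\Phi_V$ admits the filtration by columns indexed by $r$, whose first page is $E_1^{r,q} = H^q(\Phi_V^r)$. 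The previous step shows $E_1^{r,q} = (0)$ whenever $q \leq k$, and in particular whenever $r+q \leq k$, so Lemma~\ref{BelowDiag} delivers $H^n_{tot}(\Phi_V) = (0)$ for $n \leq k$.

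The main obstacle, and the only non-formal part of the argument, lies in the column-wise identification in the first step: one must verify that the representation (\ref{q-Rep}) of $\G_p \ltimes G^r$ indeed differentiates to the representation $\rho_\hh^r$ of $\gg_p \ltimes G^r$ appearing in Proposition~\ref{Ver p-page}, and that the differential formula~(\ref{PhiVr}), once rewritten via the right-invariant vector fields on the transformation groupoid computed just above it, matches the standard Crainic-van Est formula on each column. Both verifications are routine but require care with the right vs.\ left conventions of the action groupoid and with the interaction between the anchor of $\gg_p \ltimes G^r$ and the source representation; once they are in place, the remainder of the proof is a direct application of Theorem~\ref{Crainic-vanEstRephrased} and Lemma~\ref{BelowDiag}.
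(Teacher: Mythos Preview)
Your proposal is correct and follows the same route as the paper: filter the mapping cone double of $\Phi_V$ by columns, identify each $\Phi_V^r$ as the Crainic--van Est map for the transformation groupoid $\G_p\ltimes G^r\rightrightarrows G^r$ (whose source fibres are copies of $\G_p$), invoke Theorem~\ref{Crainic-vanEstRephrased} to kill $E_1^{r,q}$ for $q\leq k$, and conclude via Lemma~\ref{BelowDiag}. The verifications you flag as the ``main obstacle'' are exactly the content of the discussion leading to Eq.~(\ref{PhiVr}), which the paper treats as already established before stating the theorem.
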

\begin{proof}
We compute the cohomology of the mapping cone double of $\Phi_V$ using the 
spectral sequence of its filtration by columns, whose first page is
\begin{align*}
\xymatrix{ & \vdots & \vdots & \vdots &  \\ 
           & H^2(\Phi_V^0) \ar[r] & H^2(\Phi_V^1) \ar[r] & H^2(\Phi_V^2) \ar[r] & \dots \\
E^{p,q}_1: & H^1(\Phi_V^0) \ar[r] & H^1(\Phi_V^1) \ar[r] & H^1(\Phi_V^2) \ar[r] & \dots \\
           & H^0(\Phi_V^0) \ar[r] & H^0(\Phi_V^1) \ar[r] & H^0(\Phi_V^2) \ar[r] & \dots }
\end{align*}
Since $\Phi_V$ is defined column-wise by the van Est maps $\Phi_V^r$, the 
$r$th column of the mapping cone double coincides with the mapping cone of 
$\Phi_V^r$. Invoking Theorem~\ref{Crainic-vanEstRephrased}, the $r$th 
column of $E^{p,q}_1$ vanishes below $k$; indeed, $\G_p$ is the $s$-fibre 
of $\xymatrix{\G_p\ltimes G^r \ar@<0.5ex>[r]\ar@<-0.5ex>[r] & G^r}$ and is 
$k$-connected by hypothesis. Given that 
$E^{p,q}_1\Rightarrow H^{p+q}_{tot}(\Phi_V)$, the result follows from Lemma 
\ref{BelowDiag}.
\end{proof}

\subsection{Second approximation and the main theorem}\label{subsec-2ndAprox}
Observe that the $q$th row of $C_{LA}(\gg_p\ltimes G,\phi)$ coincides with 
the cochain complex of the Lie group $G$ with values in the representation
\begin{align*}
 & \xymatrix{\rho_{(q)}:G \ar[r] & GL(\bigwedge^q\gg_p^*\otimes W),} & \rho_{(q)}(g)\omega &=\rho_0^1(i(g))\omega\quad\text{for }\omega\in\bigwedge^q\gg_p^*\otimes W
\end{align*}
except in degree $0$. Note that assembling row-wise van Est map extended by the 
identity in degree $0$ defines a map  
\begin{align}\label{RowVanEstMap}
\xymatrix{\Phi_{row}:C_{LA}(\gg_p\ltimes G,\phi) \ar[r] & C^{p,\bullet}_\bullet(\gg_1 ,\phi)}
\end{align}
that casually lands in the $p$-page of the grid of the Lie $2$-algebra. Let
$\xymatrix{\Phi_{row}^q:C(G^r,\bigwedge^q\gg_p^*\times W) \ar[r] & \bigwedge^q\gg_p^*\otimes\bigwedge^r\gg\otimes W}$
be the van Est map defined by the van Est map for $r>0$ and the identity of 
$\bigwedge^q\gg_p^*\times V$ for $r=0$. $\Phi_{row}^q$ is explicitly given by 
\begin{align}\label{PhiRow}
(\Phi_{row}^q\omega)(\Xi;Z) & =\sum_{\varrho\in S_r}\abs{\varrho}\frac{d^J}{d\tau_{J}}\rest{\tau_J=0}\omega\big{(}\Xi;\exp(\tau_J\cdot\varrho(Z_{\vec{\gamma}}))\big{)}
\end{align}
for $\omega\in C(G^r,\bigwedge^q\gg_p^*\times W)$, $\Xi\in\gg_p^q$ 
and $Z=(z_1,...,z_r)\in\gg^r$. 
\begin{proposition}
$\Phi_{row}$ is a map of double complexes.
\end{proposition}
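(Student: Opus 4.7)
The plan is to verify that $\Phi_{row}$ commutes separately with the horizontal ($r$-direction) and vertical ($q$-direction) differentials on both sides.

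For horizontal compatibility, fix $q$ and observe that the $q$th row of $C_{LA}(\gg_p\ltimes G,\phi)$ is (by Proposition~\ref{Ver p-page}) the Lie group cochain complex of $G$ with values in the representation $\rho_{(q)}(g):=\rho_0^1(i(g))$ on $\bigwedge^q\gg_p^*\otimes W$, augmented in degree zero by $\bigwedge^q\gg_p^*\otimes V$ through $\partial'$. Its image under $\Phi_{row}^q$ lands in the $q$th row of $C^{p,\bullet}_\bullet(\gg_1,\phi)$, which is the Chevalley--Eilenberg complex of $\gg$ with values in the differentiated representation $\dot\rho_0^1\circ\mu$ on the same coefficient space, augmented through $\delta'$. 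Since $\Phi_{row}^q$ restricted to $r>0$ is, after identifying coefficients, the classical van Est map of $G$, commutativity with $\partial$ and $\delta_{(1)}$ is a standard application of the classical van Est map being a morphism of complexes. The only additional check is the first-column map: since $\Phi_{row}^q$ is the identity on $\bigwedge^q\gg_p^*\otimes V$ at $r=0$, this reduces to
\begin{align*}
\Phi_{row}^q(\partial'\omega)(\Xi;z)=\frac{d}{d\tau}\rest{\tau=0}\rho_1(\exp_G(\tau z))\omega(\Xi)=\dot\rho_1(z)\omega(\Xi)=\delta'\omega(\Xi;z)=\delta'(\Phi_{row}^q\omega)(\Xi;z),
\end{align*}
which is immediate.

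For vertical compatibility, fix $r$ and show that $\Phi_{row}^\bullet$ intertwines the Lie algebroid Chevalley--Eilenberg differential of $\gg_p\ltimes G^r$, whose representation combines $\dot\rho_0^1\circ\hat t_p$ on $W$ with the anchor action $\vec g\mapsto(\vec g)^{\exp(\lambda\hat t_p(\xi))}$ on $G^r$, with the Chevalley--Eilenberg differential of $\gg_p$ valued in $\rho_p^{(r)}=\hat t_p^*\rho^{(r)}$ from Eq.~(\ref{q-rep}). The bracket contributions on both sides match tautologically because the algebroid bracket on constant sections is the bracket of $\gg_p$. For the action contributions attached to a constant section $\xi\in\gg_p$, the fiber action $\dot\rho_0^1(\hat t_p\xi)$ on $W$ commutes with the derivatives defining $\Phi_{row}^q$ and yields the $\dot\rho_0^1(\hat t_p\xi)\Phi_{row}^q\omega$ piece, while the anchor contribution $\frac{d}{d\lambda}\rest{\lambda=0}\omega(\Xi;(\vec g)^{\exp(\lambda\hat t_p\xi)})$, after the further differentiations at the identity of $G^r$ defining $\Phi_{row}^q$, produces the sum $\sum_{k=1}^r\Phi_{row}^q\omega(\Xi;z_1,\ldots,\Lie_{\hat t_p(\xi)}z_k,\ldots,z_r)$ through an application of Lemma~\ref{Multilin} to the path of automorphisms $(-)^{\exp(\lambda\hat t_p\xi)}$ of $G$ acting on the multilinear form $\overrightarrow R_\bullet\omega$. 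Reassembled with the appropriate signs, this reproduces $\rho_p^{(r)}(\xi)\Phi_{row}^q\omega$.

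The main obstacle is this last identification: tracking precisely how, after differentiating $r$ times at the identity of $G^r$, the infinitesimal anchor action of $\hat t_p(\xi)$ on $G^r$ converts into the dual adjoint-type action on $\bigwedge^r\gg^*$ that enters $\rho^{(r)}$. Once the single-section identity is settled, the full CE equation follows by summing over $q$-tuples of sections with signs in the manner of the verifications of Propositions~\ref{delta1}, \ref{delta} and~\ref{Delta}, completing the proof.
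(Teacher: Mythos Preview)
Your proposal is correct and follows essentially the same route as the paper: both reduce the question to vertical compatibility (horizontal compatibility being the classical van Est property together with the easy first-column check you wrote out), and both identify the key step as showing that the anchor contribution of $\rho_{\gg_p}^r(\xi_j)$, after applying $\overrightarrow{R}_Z$, becomes the sum $-\sum_k R_{z_r}\cdots R_{\Lie_{\hat t_p(\xi_j)}z_k}\cdots R_{z_1}\omega$ that rebuilds $\rho^{(r)}$. The only cosmetic difference is that you appeal to Lemma~\ref{Multilin} (applied to the induced path of automorphisms of $\gg$, as in Proposition~\ref{delta}), whereas the paper carries out the same computation by a short direct induction in a basis of $W$; these are the same argument.
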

\begin{proof}
Since by definition $\Phi_{row}$ defines a map of complexes when restricted 
to rows and to the first column, we are left to prove that it is compatible 
with the vertical differentials in (\ref{VerLADbl}). Let 
$\omega\in C(G^r,\bigwedge^q\gg_p^*\otimes W)$, 
$\Xi=(\xi_0,...,\xi_q)\in\gg_p^{q+1}$ and $Z=(z_1,...,z_r)\in\gg^r$, then
\begin{align*}
\overrightarrow{R}_Z\delta\omega(\Xi) & =\frac{d^J}{d\tau_J}\rest{\tau_J=0}\sum_{j=0}^{q}(-1)^j\rho_{\gg_p}^r(\xi_j)\omega(\Xi(j);\exp(\tau_J\cdot Z))+\sum_{m<n}\omega([\xi_m,\xi_n],\Xi(m,n);\exp(\tau_J\cdot Z))  .
\end{align*}									
Let $\lbrace e_a\rbrace$ be a basis for $W$ and 
$\omega(\Xi(j);\exp(\tau_J\cdot Z))=\omega^a(\Xi(j);\exp(\tau_J\cdot Z))e_a$.
By definition
\begin{align*}
\rho_{\gg_p}^r(\xi_j)\omega(\Xi(j);\exp(\tau\cdot X))=\omega^a(\Xi(j);\exp(\tau_J\cdot Z))\dot{\rho}_0^1(\hat{t}_p(\xi_j))e_a+\big{(}\frac{d}{d\lambda}\rest{\lambda=0}\omega^a(\Xi(j);\exp(\tau_J\cdot Z)^{t_p(\exp(\lambda\xi_j))})\big{)}e_a ;
\end{align*}
hence, we compute 
\begin{align*}
\frac{d}{d\tau_1}\rest{\tau_1=0}\frac{d}{d\lambda}\rest{\lambda=0}\omega^a( & \Xi(j);\exp(\tau_J\cdot Z)^{t_p(\exp(\lambda\xi_j))}) =\frac{d}{d\lambda}\rest{\lambda=0}R_{x_1^{t_p(\exp(\lambda\xi_j))}}\omega^a(\Xi(j);\exp(\tau_J\cdot Z)(1)^{t_p(\exp(\lambda\xi_j))}) \\
 & =-R_{\Lie_{\hat{t}_p(\xi_j)}x_1}\omega^a(\Xi(j);\exp(\tau_J\cdot Z)(1)))+\frac{d}{d\lambda}\rest{\lambda=0}R_{x_1}\omega^a(\Xi(j);\exp(\tau_J\cdot Z)(1)^{t_p(\exp(\lambda\xi_j))}),
\end{align*}
which inductively yields
\begin{align*}
\frac{d^J}{d\tau_J}\rest{\tau_J=0}\frac{d}{d\lambda}\rest{\lambda=0}\omega^a(\Xi(j);\exp(\tau_J\cdot Z)^{t_p(\exp(\lambda\xi_j))}) & =-\sum_{k=1}^rR_{x_r}...R_{x_{k+1}}R_{\Lie_{\hat{t}_p\xi}x_k}R_{x_{k-1}}...R_{x_1}\omega(\Xi(j)).
\end{align*}
The result now follows from taking the alternating sum over $S_r$.
\end{proof}

That the cohomology of $\Phi_{row}$ vanishes follows from a strategy analog to 
that of Theorem~\ref{Ver p-vanEst}; however, we cannot use the Crainic-van 
Est Theorem directly as we replaced the space of $0$-cochains. In the sequel, 
we prove a van Est type theorem for $\Phi_{row}^q$ that takes into account 
these replacements.
\begin{lemma}\label{tweak0}
Let $\partial'$ be the map of Lemma~\ref{VerLA r-cx} and $\delta'$ the map 
of Eq.~(\ref{Alg1st r}). For constant $p,q\geq 0$, if $G$ is connected, 
$\ker\partial'=\ker\delta'$.
\end{lemma}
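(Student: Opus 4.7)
The plan is to unwind what membership in each kernel means pointwise on $V$, and then leverage the relationship between the Lie-group homomorphism $\rho_1:G\to GL(\phi)_1$ and its differential $\dot\rho_1:\gg\to\ggl(\phi)_1$, together with connectedness of $G$. Concretely, I first observe that $\omega\in\ker\partial'$ iff for every $\Xi\in\gg_p^q$ one has $\omega(\Xi)\in\bigcap_{g\in G}\ker\rho_1(g)\subseteq V$, while $\omega\in\ker\delta'$ iff $\omega(\Xi)\in\bigcap_{z\in\gg}\ker\dot\rho_1(z)$. So the problem reduces to showing these two subspaces of $V$ coincide.

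The inclusion $\bigcap_{g}\ker\rho_1(g)\subseteq\bigcap_{z}\ker\dot\rho_1(z)$ is immediate: differentiating the identity $\rho_1(\exp_G(\tau z))v=0$ at $\tau=0$ yields $\dot\rho_1(z)v=0$ for every $z\in\gg$. For the reverse inclusion, fix $v$ with $\dot\rho_1(z)v=0$ for all $z\in\gg$ and use the exponential formula (\ref{TheExpOfGL(phi)}) to compute
\begin{align*}
\rho_1(\exp_G(z))v=\exp_{GL(\phi)_1}(\dot\rho_1(z))v=\Big(\sum_{n=0}^\infty\frac{(\dot\rho_1(z)\phi)^n}{(n+1)!}\Big)\dot\rho_1(z)v=0.
\end{align*}
Hence $v$ is annihilated by $\rho_1$ on the image of $\exp_G$.

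Next, I would show the subset $G_v:=\{g\in G:\rho_1(g)v=0\}$ is a subgroup of $G$. Closure under multiplication follows from (\ref{GpRho1Homo}): if $\rho_1(g_1)v=\rho_1(g_2)v=0$, then $\rho_1(g_1g_2)v=\rho_1(g_1)v+\rho_1(g_2)v+\rho_1(g_1)\phi\rho_1(g_2)v=0$. Closure under inverses follows from the explicit formula $\rho_1(g^{-1})=-\rho_1(g)(I+\phi\rho_1(g))^{-1}$ for the $\odot$-inverse recorded after (\ref{gpStr}): if $\rho_1(g)v=0$ then $(I+\phi\rho_1(g))v=v$, so $(I+\phi\rho_1(g))^{-1}v=v$ and $\rho_1(g^{-1})v=-\rho_1(g)v=0$. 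Since $\exp_G$ is a local diffeomorphism at $0$, $G_v$ contains a neighborhood of $1_G$; as any connected Lie group is generated by any neighborhood of its identity, $G_v=G$, which yields the reverse inclusion.

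The main obstacle is keeping track of the nonstandard twisted group law $\odot$ on the Whitehead group $GL(\phi)_1$ throughout: in particular, one must verify carefully that the subset $G_v$ is closed under both the twisted product and the twisted inverse, exploiting the cancellation of the crossed term $\rho_1(g_1)\phi\rho_1(g_2)v$ once $\rho_1(g_2)v$ vanishes. Everything else is standard Lie-theoretic bookkeeping on the exponential and connected generation.
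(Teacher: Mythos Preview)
Your proof is correct and follows essentially the same approach as the paper: differentiate for the inclusion $\ker\partial'\subseteq\ker\delta'$, then for the reverse inclusion use the exponential formula (\ref{TheExpOfGL(phi)}) to kill $\rho_1(\exp_G(z))v$ and extend to all of $G$ via connectedness and the homomorphism law (\ref{GpRho1Homo}). The paper packages the last step as writing $g=\exp_G(z_1)\cdots\exp_G(z_n)$ and inducting on (\ref{GpRho1Homo}), whereas you phrase it as $G_v$ being a subgroup containing a neighborhood of the identity; your extra verification of closure under inverses is harmless but unnecessary, since the image of $\exp_G$ already contains $\exp_G(-z)=\exp_G(z)^{-1}$.
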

\begin{proof}
($\subseteq$) If $\omega\in\ker\partial'$, $(\partial'\omega)(\Xi;g)=0$ 
for all $(\Xi;g)\in\gg_p^q\times G$. Then, for $z\in\gg$, 
\begin{align*}
(\delta'\omega)(\Xi;z)=\Phi_{row}^q(\partial'\omega)(\vec{\gamma};z)=\frac{d}{d\tau}\rest{\tau =0}(\partial'\omega)(\Xi;\exp_G(\tau z))=0.
\end{align*}
($\supseteq$) Conversely, if $\omega\in\ker\delta'$, 
$(\delta'\omega)(\Xi;x)=\dot{\rho}_1(z)\omega(\Xi)=0$ for all 
$(\Xi;z)\in\gg_p^q\times\gg$. Being connected, $G$ is 
generated by $\exp_G(U)\subset G$ for some neighborhood of the identity $U$. 
Therefore, for all $g\in G$, there exist $z_1,...,z_n\in\gg$ such that
$g=\exp_G(z_1)...\exp_G(z_n)$. Since $\rho_1$ is a Lie group homomorphism, 
it follows from Eq.~(\ref{TheExpOfGL(phi)}) that  
\begin{align*}
\rho_1(\exp_G(z))\omega(\Xi)=\exp_{GL(\phi)_1}(\dot{\rho}_1(z))\omega(\Xi)=\sum_{n=0}^\infty\frac{(\dot{\rho}_1(z)\phi)^n}{(n+1)!}\dot{\rho}_1(z)\omega(\Xi)=0
\end{align*}
for all $z\in\gg$. Now, 
$\partial'\omega(\Xi;g)=\rho_1(g)\omega(\Xi)=\rho_1(\exp_G(z_1)...\exp_G(z_n))\omega(\Xi)$ 
and it follows from Eq.~(\ref{GpRho1Homo}) that
\begin{align*}
\rho_1(\exp_G(z_1)...\exp_G(z_n)) & =\rho_1(\exp(z_1)...\exp(z_{n-1}))+\rho_1(\exp_G(z_n))+\rho_1(\exp(z_1)...\exp(z_{n-1}))\phi\rho_1(\exp_G(z_n));						
\end{align*}
hence, the result follows from a simple induction.
\end{proof}
As $\Phi_{row}$ restricted to the first column of (\ref{VerLADbl}) is the 
identity and there are no cochains of negative degree, Lemma~\ref{tweak0} 
is a van Est type theorem in degree $0$ and a consequence of several pieces 
of Lie theory. In contrast, the following lemma is stated as a general result 
of homological algebra and implies naturally that if $G$ is $1$-connected, 
$\Phi_{rwo}$ induces isomorphism in degree $1$.
\begin{definition}\label{phiRel}
If $(C_1^\bullet,d_{C_1})$ and $(C_2^\bullet,d_{C_2})$ are equal complexes 
except in degree zero, and there is a map $\xymatrix{\phi:C_1^0 \ar[r] & C_2^0}$ 
such that $d_{C_1}=d_{C_2}\circ\phi$, then they are called $\phi$-related. 
\end{definition}
\begin{lemma}\label{tweak1}
Let $(C_1^\bullet,d_{C_1})$, $(C_2^\bullet,d_{C_2})$ be $\phi_C$-related 
complexes, and let $(D_1^\bullet,d_{D_1})$, $(D_2^\bullet,d_{D_2})$ be  
$\phi_D$-related complexes. Let 
$\xymatrix{\Phi_1:C_1^\bullet \ar[r] & D_1^\bullet}$ and 
$\xymatrix{\Phi_2:C_2^\bullet \ar[r] & D_2^\bullet}$ be maps of complexes 
that coincide except in degree zero, where 
\begin{align*}
\xymatrix{C_1^0 \ar[rd]^{d_{C_1}}\ar[dd]_{\phi_C}\ar[rr]^{\Phi_1} &    & D_1^0 \ar[rd]^{d_{D_1}}\ar'[d]_{\phi_D}[dd] &      \\
                                                    & C_1^1=C_2^1 \ar[rr]^{\quad\qquad\Phi_1=\Phi_2} &       & D_1^1=D_2^1. \\
          C_2^0 \ar[ur]_{d_{C_2}}\ar[rr]_{\Phi_2}                &    & D_2^0 \ar[ur]_{d_{D_2}}                        &     } 
\end{align*} 
If $\Phi_1$ induces an isomorphisms $H^1(C_1)\cong H^1(D_1)$, then 
$\Phi_2$ induces an isomorphism $H^1(C_2)\cong H^1(D_2)$.
\end{lemma}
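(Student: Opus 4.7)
The plan is to reduce the claim to a diagram chase at the $H^1$ level by upgrading the degree-zero data to chain maps. First I introduce the auxiliary chain maps $\pi_C\colon C_1 \to C_2$ and $\pi_D\colon D_1 \to D_2$ defined to be $\phi_C$ and $\phi_D$ in degree zero and the identity in higher degrees; the $\phi$-relatedness of Definition~\ref{phiRel} guarantees each is a chain map, and the commutativity of the cube in the hypothesis forces $\pi_D\circ\Phi_1 = \Phi_2\circ\pi_C$. Passing to cohomology yields a commutative square
\[
\xymatrix{
H^1(C_1) \ar[r]^{\Phi_1^*} \ar@{->>}[d]_{\pi_C^*} & H^1(D_1) \ar@{->>}[d]^{\pi_D^*} \\
H^1(C_2) \ar[r]^{\Phi_2^*} & H^1(D_2),
}
\]
in which the vertical maps are surjective because $Z^1(C_1) = Z^1(C_2)$ and $Z^1(D_1) = Z^1(D_2)$ (the two complexes on each side coincide in degrees $\geq 1$), while the inclusions $B^1(C_1) \subseteq B^1(C_2)$ and $B^1(D_1) \subseteq B^1(D_2)$ only enlarge the coboundary subspaces.

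Next, I repackage the vertical surjections as short exact sequences
\[
\xymatrix{0 \ar[r] & B^1(C_2)/B^1(C_1) \ar[r] & H^1(C_1) \ar[r]^{\pi_C^*} & H^1(C_2) \ar[r] & 0}
\]
and analogously for $D$, connected horizontally by $\Phi_1^*$, $\Phi_2^*$, and the induced restriction $\alpha\colon B^1(C_2)/B^1(C_1) \to B^1(D_2)/B^1(D_1)$ on kernels. Surjectivity of $\Phi_2^*$ then follows from a three-step diagram chase: lift any $[d]_2 \in H^1(D_2)$ via $\pi_D^*$ to $[d]_1 \in H^1(D_1)$, pull back via $(\Phi_1^*)^{-1}$ to $[c]_1 \in H^1(C_1)$, and project $[c]_2 := \pi_C^*[c]_1$; the commutativity of the square gives $\Phi_2^*[c]_2 = \pi_D^*\Phi_1^*[c]_1 = [d]_2$. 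For injectivity, the five-lemma applied to the pair of short exact sequences reduces the statement to the claim that $\alpha$ itself is an isomorphism, and injectivity of $\alpha$ is automatic since it is a restriction of the injective $\Phi_1^*$.

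The main obstacle is thus proving that $\alpha$ is surjective. Under the identifications $B^1(C_2)/B^1(C_1) \cong C_2^0/(\phi_C(C_1^0) + Z^0(C_2))$ and $B^1(D_2)/B^1(D_1) \cong D_2^0/(\phi_D(D_1^0) + Z^0(D_2))$, the map $\alpha$ becomes the one induced by $\Phi_2^0$ on these cokernels. I will extract the required surjectivity from the compatibility $\phi_D\circ\Phi_1^0 = \Phi_2^0\circ\phi_C$ read off the cube, combined with the analysis of first-column differentials already exploited in Lemma~\ref{tweak0}; in the paper's applications $\phi_C$ and $\phi_D$ are both induced by the single map $\phi\colon W \to V$, so the argument should mirror the polynomial expansion of $\rho_1$ from (\ref{TheExpOfGL(phi)}) together with the connectedness of $G$ that settled the matching statement in degree zero.
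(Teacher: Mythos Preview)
Your framework is sound up through the commutative square on $H^1$, the short exact sequences
\[
0 \longrightarrow B^1(X_2)/B^1(X_1) \longrightarrow H^1(X_1) \longrightarrow H^1(X_2) \longrightarrow 0,
\]
and the deduction of surjectivity of $\Phi_2^*$ together with injectivity of $\alpha$. However, your final paragraph is not a proof: you explicitly decline to establish surjectivity of $\alpha$ from the stated hypotheses and instead appeal to the specific features of the application (Lemma~\ref{tweak0}, the formula~(\ref{TheExpOfGL(phi)}), connectedness of $G$). Since the lemma is phrased as a general statement of homological algebra, this is a genuine gap.

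In fact your instinct that something external is needed is correct, because the lemma as stated is false. Take $C_1^0=C_2^0=D_1^0=0$, $D_2^0=k$, $C^1=D^1=k$, $C^2=D^2=0$, with $d_{D_2}\colon k\to k$ the identity, $\phi_C=0$, $\phi_D=0$, and $\Phi_1=\Phi_2=\mathrm{id}$ in degree~$1$. All hypotheses are met and $\Phi_1^*\colon k\to k$ is an isomorphism, yet $H^1(C_2)=k$ while $H^1(D_2)=0$, so $\Phi_2^*$ is not injective. In this example your map $\alpha\colon 0\to k$ is not surjective, exactly where you stopped. The paper's own proof has a parallel gap: from $\ker(\Phi_2\vert_{B^1_{C_2}})=\ker\Phi_1=\ker\Phi_2$ the snake lemma only yields that $\ker[\Phi_2]$ injects into $\coker(\Phi_2\vert_{B^1_{C_2}})$, not that it vanishes; the same counterexample breaks that step. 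What actually makes Proposition~\ref{TweakedVanEst} go through is the additional structure present in the application, so the honest fix is either to strengthen the hypotheses of the lemma (for instance by assuming $\Phi_2^0$ is surjective modulo $\phi_D(D_1^0)+Z^0(D_2)$, which does hold in the intended use) or to fold the argument directly into the proof of Proposition~\ref{TweakedVanEst}.
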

\begin{proof}
Let $Z^1_{X_k}:=\xymatrix{\ker(d_{X_k}:X_k^1 \ar[r] & X_k^2)}$ and 
$B^1_{X_k}:=d_{X_k}(X_k^0)$, for $X\in\lbrace C,D\rbrace$ and 
$k\in\lbrace 1,2\rbrace$, and consider the maps of exact sequences
\begin{align*}
\xymatrix{ 0 \ar[r] & B^1_{C_k} \ar[d]_{\Phi_k\rest{B^1_{C_k}}}\ar[r] & Z^1_{C_k} \ar[d]_{\Phi_k}\ar[r] & H^1(C_k) \ar[d]_{[\Phi_k]}\ar[r] & 0 \\
		   0 \ar[r] & B^1_{D_k} \ar[r] & Z^1_{D_k} \ar[r] & H^1(D_k) \ar[r] & 0 }
\end{align*}
whose associated long exact sequence is
\begin{align*}
\xymatrix{ 0 \ar[r] & \ker(\Phi_k\rest{B^1_{C_k}}) \ar[r] & \ker\Phi_k \ar[r] & \ker[\Phi_k] \ar[r] & \coker(\Phi_k\rest{B^1_{C_k}}) \ar[r] & \coker\Phi_k \ar[r] & \coker[\Phi_k] \ar[r] & 0. }
\end{align*}
By hypothesis, $\ker[\Phi_1]$ and $\coker[\Phi_1]$ are trivial thus 
implying $\coker(\Phi_1\rest{B^1_{C_1}})\cong\coker\Phi_1$ and 
$\ker(\Phi_1\rest{B^1_{C_1}})=\ker\Phi_1$, which we interpret as 
$\ker\Phi_1\subset B^1_{C_1}$. Since for every element $x\in C_1^0$, 
$d_{C_1}(x)=d_{C_2}(\phi_C(x))$, we have got that 
$B^1_{C_1}\subseteq B^1_{C_2}$; therefore, 
$\ker(\Phi_2\rest{B^1_{C_2}})=\ker\Phi_1\cap B^1_{C_2}=\ker\Phi_1$. As a 
consequence, $\ker[\Phi_2]$ vanishes, so the induced map in cohomology is 
injective and we are left with the short exact sequence
\begin{align*}
\xymatrix{ 0 \ar[r] & \coker(\Phi_2\rest{B^1_{C_2}}) \ar[r] & \coker\Phi_2 \ar[r]  & \coker[\Phi_2] \ar[r] & 0.}
\end{align*}
Now, $d_{D_1}=d_{D_2}\circ\phi_D$ implies $B^1_{D_1}\subseteq B^1_{D_2}$, 
so there is a map of exact sequences
\begin{align}\label{finSeq}
\xymatrix{ 0 \ar[r] & \coker(\Phi_1\rest{B^1_{C_1}}) \ar[d]_{\alpha}\ar[r] & \coker\Phi_1 \ar[d]_{Id}\ar[r]  & 0 \ar[d]\ar[r]     & 0 \\
         0 \ar[r] & \coker(\Phi_2\rest{B^1_{C_2}}) \ar[r]   & \coker\Phi_2 \ar[r]    & \coker[\Phi_2] \ar[r] & 0, }
\end{align}
where, for $y\in D^0_1$, 
$\alpha(d_{D_1}(y)+\Phi(B^1_{C_1})):=d_{D_2}(\phi_D(y))+\Phi(B^1_{C_2})$. 
The long exact sequence of (\ref{finSeq}) tells us that $\alpha$ is an 
isomorphism and $\coker[\Phi_2]$ is trivial, so the induced map in 
cohomology is surjective.

\end{proof}
\begin{remark}
In the proof of Lemma~\ref{tweak1}, the inclusions $B^1_{X_1}\subseteq B^1_{X_2}$ 
also give rise to exact sequences
\begin{align*}
\xymatrix{ 0 \ar[r] & B^1_{X_1} \ar[d]\ar[r]  & Z^1_{C_1} \ar[d]_{Id}\ar[r] & H^1(X_1) \ar[d]\ar[r] & 0 \\
           0 \ar[r] & B^1_{X_2} \ar[r]        & Z^1_{C_2} \ar[r]            & H^1(X_2) \ar[r]       & 0  }
\end{align*}
out of whose long exact sequences one reads  
\begin{align}\label{dirSum}
H^1(X_1)\cong H^1(X_2)\oplus \frac{B^1_{X_2}}{B^1_{X_1}}.
\end{align}
What the proof of Lemma~\ref{tweak1} ultimately says is that the isomorphism 
$H^1(C)\cong H^1(D)$ is diagonal with respect to the direct sum decompositions 
of Eq.~(\ref{dirSum}).
\end{remark}
\begin{proposition}\label{TweakedVanEst}
For constant $q\geq 0$, if $G$ is $k$-connected, $H^n(\Phi_{row}^q)=(0)$ 
for all degrees $n\leq k$.
\end{proposition}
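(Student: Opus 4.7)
The plan is to reduce the statement to the classical van Est theorem for the Lie group $G$ with suitably chosen coefficients, and then correct for the discrepancy in degree $0$ using Lemmas~\ref{tweak0} and~\ref{tweak1}.

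More precisely, I would first introduce the \emph{standard} cochain complexes: let $\widehat{\mathcal D}^\bullet$ be the Lie group cochain complex of $G$ with values in the representation $\rho_{(q)}(g):=\rho_0^1(i(g))$ on $\bigwedge^q\gg_p^*\otimes W$, let $\widehat{\mathcal C}^\bullet$ be the Chevalley-Eilenberg complex of $\gg$ with values in its differentiation $\dot\rho_{(q)}(z)=\rho_0^1(\mu(z))$, and let $\widehat\Phi\colon\widehat{\mathcal D}^\bullet\to\widehat{\mathcal C}^\bullet$ be the classical van Est map. By Theorem~\ref{Crainic-vanEst}, if $G$ is $k$-connected, $\widehat\Phi$ induces isomorphisms on $H^n$ for $n\leq k$ and an injection for $n=k+1$.

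The key observation is that $\widehat{\mathcal D}^\bullet$ and the source of $\Phi_{row}^q$ coincide from degree $1$ onward and differ in degree $0$ only in that $W$ is replaced by $V$; moreover, post-composition with $\phi\colon W\to V$ makes the two complexes $\phi$-related in the sense of Definition~\ref{phiRel}. This uses the crossed-module identity $\rho_0^1(i(g))=I+\rho_1(g)\circ\phi$ of~(\ref{GpRepEqns}), which intertwines the two degree-$0$ differentials. A parallel computation on the algebraic side, using $\rho_0^1(\mu(z))=\rho_1(z)\circ\phi$ from~(\ref{AlgRepEqns}), shows that $\widehat{\mathcal C}^\bullet$ and the target of $\Phi_{row}^q$ are likewise $\phi$-related. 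By its definition, $\Phi_{row}^q$ coincides with $\widehat\Phi$ in degrees $\geq 1$.

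With this setup, the argument splits by cohomological degree. For $n\geq 2$, the group $H^n$ of any complex depends only on its terms in degrees $\geq 1$, so $H^n(\Phi_{row}^q)=H^n(\widehat\Phi)$ canonically and Theorem~\ref{Crainic-vanEst} supplies the required isomorphism for $2\leq n\leq k$ and injection at $n=k+1\geq 2$ directly. For $n=0$, Lemma~\ref{tweak0} identifies $\ker\partial'$ with $\ker\delta'$ under connectedness of $G$, giving the isomorphism on $H^0$. For $n=1$, I would apply Lemma~\ref{tweak1} with $\Phi_1=\widehat\Phi$ and $\Phi_2=\Phi_{row}^q$, which transports the isomorphism on $H^1(\widehat\Phi)$ (available whenever $G$ is $1$-connected) to an isomorphism on $H^1(\Phi_{row}^q)$. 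Assembling these via Proposition~\ref{ConeCoh} yields the desired vanishing $H^n(\Phi_{row}^q)=(0)$ for $n\leq k$.

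The main obstacle is the boundary case $k=0$, where only connectedness of $G$ is assumed and one still needs injectivity of $\Phi_{row}^q$ on $H^1$. I expect this to follow from a mild refinement of Lemma~\ref{tweak1}, since its proof actually uses only $\ker[\Phi_1]=0$ (not surjectivity) to derive $\ker[\Phi_2]=0$; and the injectivity of $\widehat\Phi$ on $H^1$ is already guaranteed by Theorem~\ref{Crainic-vanEst} when $G$ is merely connected.
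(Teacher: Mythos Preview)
Your proposal is correct and follows essentially the same approach as the paper: invoke the Crainic--van Est theorem for $G$ with coefficients in $\bigwedge^q\gg_p^*\otimes W$ to handle degrees $n\geq 2$, use Lemma~\ref{tweak0} for $n=0$, and apply Lemma~\ref{tweak1} via the $\phi$-relation for $n=1$. Your additional analysis of the boundary case $k=0$ (observing that the injectivity half of Lemma~\ref{tweak1} requires only $\ker[\Phi_1]=0$) is a valid refinement that the paper does not make explicit.
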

\begin{proof}
Since $G$ is the $s$-fibre of the group bundle 
$\xymatrix{\gg_p^q\times G \ar@<0.5ex>[r]\ar@<-0.5ex>[r] & \gg_p^q}$ 
and is $k$-connected by hypothesis, Theorem~\ref{Crainic-vanEstRephrased} 
implies the result for $1<n\leq k$. That there is an isomorphism in degree 
$0$ follows from Lemma~\ref{tweak0}. As for degree $1$, the result follows 
from Lemma~\ref{tweak1} after noticing that letting 
\begin{align*}
\phi_p^q & \xymatrix{:\bigwedge^q\gg_p^*\otimes W \ar[r] & \bigwedge^q\gg_p^*\otimes V} & (\phi_p^q\omega)(\Xi) & :=\phi(\omega(\Xi))\quad\text{for }\Xi\in\gg_p^q,
\end{align*} 
the $q$th rows of (\ref{protoVerDbl}) and (\ref{VerLADbl}) are 
$\phi_p^q$-related and the same holds for the Chevalley-Eilenberg complex 
of $\gg$ with values in (\ref{rRep}) and the $q$th row of 
the $p$-page $C^{p,\bullet}_\bullet(\gg_1,\phi)$.
\end{proof}
\begin{theorem}\label{res-vanEst} 
If $G$ is $k$-connected, $H^n_{tot}(\Phi_{row})=(0)$ for all degrees 
$n\leq k$.
\end{theorem}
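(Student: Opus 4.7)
The plan is to mimic the proof of Theorem~\ref{Ver p-vanEst}. Since $\Phi_{row}$ is a map of double complexes (by the preceding proposition) assembled row-wise from the van Est maps $\Phi_{row}^q$, I would compute $H_{tot}^\bullet(\Phi_{row})$ via the spectral sequence associated with a suitable filtration of its mapping cone double, and then invoke Proposition~\ref{TweakedVanEst} together with Lemma~\ref{BelowDiag}.

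First I would form the mapping cone double $C^{\bullet,\bullet}(\Phi_{row})$ as described in the paragraph after Proposition~\ref{ConeCoh}; the construction is carried out along the $q$-direction (the direction preserved by $\Phi_{row}$), so that for each fixed $q$ the corresponding ``column'' of the cone is precisely the mapping cone of $\Phi_{row}^q$, with the transverse differentials induced by the vertical differentials of the source and target double complexes. Filtering by $q$ yields a spectral sequence
\begin{align*}
E_1^{q,r}=H^r(\Phi_{row}^q) \quad\Longrightarrow\quad H_{tot}^{q+r}(\Phi_{row}).
\end{align*}

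By Proposition~\ref{TweakedVanEst}, the hypothesis that $G$ is $k$-connected gives $H^r(\Phi_{row}^q)=(0)$ for every $q\geq 0$ and every $r\leq k$. In particular, since $q\geq 0$ forces $r\leq k$ whenever $q+r\leq k$, the $E_1$-page vanishes in the whole region $q+r\leq k$, and Lemma~\ref{BelowDiag} then delivers $H_{tot}^n(\Phi_{row})=(0)$ for all $n\leq k$.

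No serious obstacle remains at this point: the substantive input was packaged into Proposition~\ref{TweakedVanEst}, where the non-standard shape of the $0$th column of $C_{LA}(\gg_p\ltimes G,\phi)$ (the map $\delta'$ replacing $\delta_{(1)}$) forced the degree-zero and degree-one refinements of Lemmas~\ref{tweak0} and~\ref{tweak1}, beyond a direct application of Theorem~\ref{Crainic-vanEst}. Once those refinements are in place, the deduction of Theorem~\ref{res-vanEst} is a purely formal spectral-sequence argument parallel to the one used for Theorem~\ref{Ver p-vanEst}.
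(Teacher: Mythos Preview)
Your proposal is correct and follows essentially the same argument as the paper: form the (transposed) mapping cone double of $\Phi_{row}$ so that fixing $q$ yields the cone of $\Phi_{row}^q$, filter by $q$, apply Proposition~\ref{TweakedVanEst} to kill the $E_1$-page below the diagonal, and conclude via Lemma~\ref{BelowDiag}. The only cosmetic difference is that the paper speaks of the ``transposed'' cone and filters by rows, which amounts to the same filtration you describe.
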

\begin{proof}
We compute the cohomology of the (transposed) mapping cone double of 
$\Phi_{row}$ using the spectral sequence of its filtration by rows, whose 
first page is
\begin{align*}
\xymatrix{ & \vdots & \vdots & \vdots &  \\ 
           & H^0(\Phi_{row}^2) \ar[u] & H^1(\Phi_{row}^2) \ar[u] & H^2(\Phi_{row}^2) \ar[u] & \dots \\
E^{p,q}_1: & H^0(\Phi_{row}^1) \ar[u] & H^1(\Phi_{row}^1) \ar[u] & H^2(\Phi_{row}^1) \ar[u] & \dots \\
           & H^0(\Phi_{row}^0) \ar[u] & H^1(\Phi_{row}^0) \ar[u] & H^2(\Phi_{row}^0) \ar[u] & \dots }
\end{align*}
Since $\Phi_{row}$ is defined row-wise by van Est maps, the $q$th row of 
the transposed mapping cone double coincides with the mapping cone of 
$\Phi_{row}^q$. Invoking Proposition~\ref{TweakedVanEst}, the $q$th row 
of $E^{p,q}_1$ vanishes below $k$; indeed, $G$ is $k$-connected by hypothesis. 
Given that $E^{p,q}_1\Rightarrow H^{p+q}_{tot}(\Phi_{row})$, the result follows 
from Lemma \ref{BelowDiag}.
\end{proof}
\begin{remark}
One can prove results analog to those in Subsection~\ref{subsec-1stAprox} for 
the horizontal differential of (\ref{DblRepn}) yielding as a first approximation 
a map to the double complex associated to the other LA-groupoid in (\ref{LAGpds}). 
In that case, the ideas needed to prove that its cohomology vanishes parallel those 
of Lemmas~\ref{tweak0} and \ref{tweak1}. We opted for the present approach because, 
in the second approximation, one would need a van Est theory adapted to the 
subcomplex of multilinear cochains.
\end{remark}
We are ready to prove that the cohomology of the restriction $\Phi_p$ 
(\ref{Phi_p}) of the van Est map $\Phi$ (\ref{vanEstMap}) to the $p$-pages 
vanishes. 
\begin{theorem}\label{vanEst p-pages}
If $H$ and $G$ are both $k$-connected, $H^n_{tot}(\Phi_p)=(0)$ for all 
degrees $n\leq k$.
\end{theorem}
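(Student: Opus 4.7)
The plan is to realize the map $\Phi_p$ (\ref{Phi_p}) as the composition $\Phi_{row}\circ\Phi_V$ of the two maps of double complexes introduced in Subsections \ref{subsec-1stAprox} and \ref{subsec-2ndAprox}, and then combine Theorems \ref{Ver p-vanEst} and \ref{res-vanEst} through Proposition \ref{ConeCoh}. To establish the factorization, I would compare formulas (\ref{vanEstMap}), (\ref{PhiVr}) and (\ref{PhiRow}) directly. On a cochain $\omega\in C^{p,q}_r(\G,\phi)$ with $q,r>0$, the map $\Phi$ takes iterated right-invariant derivatives in the $\gg_p$- and $\gg$-directions; since smooth mixed partials commute, these two blocks of derivatives can be carried out sequentially. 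The operator $\Phi_V^r$ executes the $\gg_p$-direction derivatives while the $G^r$-arguments are held fixed, and $\Phi_{row}^q$ afterwards executes the $\gg$-direction derivatives, so their composition reproduces the double sum defining $\Phi$. The boundary cases $q=0$ and $r=0$ match because $\Phi_V^0$ and $\Phi_{row}^0$ were defined to be identities precisely on the first row and first column, which align with the $\delta'$ bridges (\ref{Alg1st r}) and (\ref{Gp1st r}) on both sides.

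Next, under the canonical isomorphism $\G_p\cong G^p\times H$, the hypotheses that $H$ and $G$ are $k$-connected imply, via the K\"unneth-type vanishing of homotopy groups of a product, that $\G_p$ is itself $k$-connected (this is the same observation already used in the proof of Theorem \ref{2vE-vs}). Theorem \ref{Ver p-vanEst} then gives $H^n_{tot}(\Phi_V)=(0)$ for $n\leq k$, and Theorem \ref{res-vanEst} gives $H^n_{tot}(\Phi_{row})=(0)$ in the same range.

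To conclude, Proposition \ref{ConeCoh}, applied to the total complexes of the two mapping cone doubles, translates these vanishings into the statements that $\Phi_V$ and $\Phi_{row}$ induce isomorphisms on total cohomology in degrees $\leq k$ and injections in degree $k+1$. Both properties are preserved under composition, so $\Phi_p=\Phi_{row}\circ\Phi_V$ inherits them, and a reverse application of Proposition \ref{ConeCoh} yields $H^n_{tot}(\Phi_p)=(0)$ for $n\leq k$, as required. The main bookkeeping difficulty I anticipate is confirming the factorization on the seam between the $r=0$ and $r>0$ regions, where $\delta'$ replaces the usual groupoid differential and the identity conventions on $\Phi_V^0$ and $\Phi_{row}^0$ must mesh properly; once this is handled, the remainder of the argument is entirely formal.
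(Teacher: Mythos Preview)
Your proposal is correct and follows essentially the same route as the paper: factor $\Phi_p=\Phi_{row}\circ\Phi_V$ (the paper records this as the commutative triangle at the end of the proof), use the K\"unneth argument to get $\G_p$ $k$-connected, invoke Theorems~\ref{Ver p-vanEst} and~\ref{res-vanEst}, and pass through Proposition~\ref{ConeCoh} in both directions. One small imprecision: $\Phi_V^0$ (the $r=0$ column) is not an identity but the classical van Est map for $\G_p$, while it is $\Phi_{row}$ on $r=0$ that is the identity; the factorization on that seam still holds for exactly this reason, so your argument goes through unchanged.
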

\begin{proof}
As in the proof of Theorem~\ref{2vE-vs}, it follows from the K\"unneth 
formula and the $k$-connectedness of $H$ and $G$, that $\G_p$ is 
$k$-connected as well. Theorem~\ref{Ver p-vanEst} and Proposition 
\ref{ConeCoh} imply that $\Phi_V$ induces isomorphism in cohomology for 
$n\leq k$, and it is injective for $n=k+1$. Analogously, as $G$ is 
$k$-connected, Theorem~\ref{res-vanEst} and Proposition~\ref{ConeCoh} imply 
that the same holds for $\Phi_{row}$. The result follows from 
Proposition~\ref{ConeCoh} after noticing that (cf. Eq.'s~(\ref{PhiVr}) and 
(\ref{PhiRow})), for constant $p$, 
\begin{align*}
\xymatrix{ C(\G_p^q\times G^r,W) \ar[rr]^{\Phi_p}\ar[dr]_{\Phi_V^r} & & \bigwedge^q\gg_p^*\otimes\bigwedge^r\gg^*\otimes W . \\
                       & C(G^r,\bigwedge^q\gg_p^*\otimes W) \ar[ur]_{\Phi_{row}^q} &  }
\end{align*}
\end{proof}
As announced in the introduction, we can now prove the main Theorem. 
\begin{proof}(\textit{of Theorem}~\ref{2-vanEstTheo})
We compute the cohomology of the mapping cone triple of $\Phi$ using the 
spectral sequence of the filtration by columns of (\ref{Cllpsed}), whose 
first page is (schematically)
\begin{align}\label{CllpsedCoh}
\xymatrix{ & \vdots & \vdots & \vdots & \vdots \\ 
           & H_{tot}^3(\Phi_0) \ar[r]\ar@{-->}[rrd]\ar@{.>}[rrrdd] & H_{tot}^3(\Phi_1) \ar[r]\ar@{-->}[rrd]\ar@{.>}[rrrdd] & H_{tot}^3(\Phi_2) \ar[r]\ar@{-->}[rrd] & H_{tot}^3(\Phi_3) \ar[r] & \dots \\
E^{p,q}_1: & H_{tot}^2(\Phi_0) \ar[r]\ar@{-->}[rrd] & H_{tot}^2(\Phi_1) \ar[r]\ar@{-->}[rrd] & H_{tot}^2(\Phi_2) \ar[r]\ar@{-->}[rrd] & H_{tot}^2(\Phi_3) \ar[r] & \dots \\
           & H_{tot}^1(\Phi_0) \ar[r] & H_{tot}^1(\Phi_1) \ar[r] & H_{tot}^1(\Phi_2) \ar[r] & H_{tot}^1(\Phi_3) \ar[r] & \dots\\
           & H_{tot}^0(\Phi_0) \ar[r] & H_{tot}^0(\Phi_1) \ar[r] & H_{tot}^0(\Phi_2) \ar[r] & H_{tot}^0(\Phi_3) \ar[r] & \dots }
\end{align}
By definition, the $p$th column in (\ref{CllpsedCoh}) is given by the 
cohomology of the mapping cone double of $\Phi_p$. Invoking 
Theorem~\ref{vanEst p-pages}, the $p$th column of $E^{p,q}_1$ vanishes 
below $k$; indeed, both $H$ and $G$ are $k$-connected by hypothesis. Given 
that $E^{p,q}_1\Rightarrow H^{p+q}_{tot}(\Phi)$, the result follows from Lemma 
\ref{BelowDiag} and Proposition~\ref{ConeCoh}.

\end{proof}
And having all the ingredients to run van Est's strategy:
\begin{corollary}
Every finite-dimensional Lie $2$-algebra is integrable.
\end{corollary}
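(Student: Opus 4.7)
The plan is to execute the van Est strategy outlined in the introduction, now that all the ingredients have been assembled. Let $\gg_1$ be a finite-dimensional Lie $2$-algebra with associated crossed module $\xymatrix{\gg \ar[r]^\mu & \hh}$ and consider the canonical adjoint extension
\begin{eqnarray*}
\xymatrix{
0 \ar[r] & \ker(\ad) \ar[r] & \gg_1 \ar[r]^{\ad\quad} & \ad(\gg_1) \ar[r] & 0
}
\end{eqnarray*}
associated to the adjoint representation of Example~\ref{2ad}. Here $\ad(\gg_1)$ is a Lie $2$-subalgebra of $\ggl(\mu)$, and $\ker(\ad)$ is an abelian Lie $2$-subalgebra of $\gg_1$, that is, a $2$-vector space, on which $\ad(\gg_1)$ acts by restriction of the adjoint action. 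By Theorem~\ref{H2Alg}, this extension is classified by a unique cohomology class $[\omega_{\gg_1}]\in H^2_\nabla(\ad(\gg_1),\ker(\ad))$.

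Next, invoking the integrability of linear Lie $2$-algebras \cite{Sheng_Zhu2:2012}, choose a Lie $2$-group integration of $\ad(\gg_1)$ whose underlying crossed module $\xymatrix{G \ar[r] & H}$ has both $G$ and $H$ simply connected; since $1$-connected Lie groups are automatically $2$-connected, both $G$ and $H$ satisfy the connectedness hypothesis of Theorem~\ref{2-vanEstTheo}. Combined with Theorem~\ref{itIs}, which guarantees that $\Phi$ is a map of complexes in the relevant range, Theorem~\ref{2-vanEstTheo} with $k=2$ then yields an isomorphism
\begin{eqnarray*}
\xymatrix{
\Phi:H_{\nabla}^2(\G,\ker(\ad)) \ar[r] & H_{\nabla}^2(\ad(\gg_1),\ker(\ad)).
}
\end{eqnarray*}
In particular, there is a unique class $[\smallint\omega_{\gg_1}]\in H^2_\nabla(\G,\ker(\ad))$ with $\Phi[\smallint\omega_{\gg_1}]=[\omega_{\gg_1}]$.

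By Theorem~\ref{H2Gp}, this class corresponds to a unique isomorphism class of split extensions of $\G$ by $\ker(\ad)$; pick a representative $\tilde{\G}$. By the compatibility established in Section~\ref{sec-TheMap}, namely that $\Phi$ implements the operation of taking the Lie $2$-algebra of a Lie $2$-group extension (see the discussion following~(\ref{AlphaAction})), the Lie $2$-algebra of $\tilde{\G}$ is the extension of $\ad(\gg_1)$ by $\ker(\ad)$ classified by $\Phi[\smallint\omega_{\gg_1}]=[\omega_{\gg_1}]$, which is exactly $\gg_1$. Hence $\tilde{\G}$ integrates $\gg_1$.

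The main conceptual obstacle, namely the fact that the cohomological van Est comparison holds in degree~$2$ for the total complexes~(\ref{Cxs}), has already been overcome in Theorem~\ref{2-vanEstTheo}; what remains in this corollary is essentially to verify that all pieces of the strategy fit together, most notably that $\ker(\ad)$ is genuinely a $2$-vector space and that the $2$-connectedness requirement can be met simultaneously on $G$ and on $H$ by choosing a $1$-connected integration of the linear $\ad(\gg_1)$.
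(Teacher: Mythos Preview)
Your proof is correct and follows precisely the van Est strategy sketched in the introduction, which is exactly what the paper intends as the proof of this corollary. One small point: to apply Theorem~\ref{2-vanEstTheo} with $k=2$ you need $\Phi$ to be a map of complexes for $n\leq 3$, whereas Theorem~\ref{itIs} is stated only for $n\leq 2$; this is covered by the Remark immediately following it (the map-of-complexes property extends to $n\leq 5$), so you may want to cite that Remark rather than Theorem~\ref{itIs} alone.
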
 

\acks This work was supported by CAPES; 
the \emph{National Council for Scientific and Technological Development} 
- CNPq; and FAPERJ [grant number E-26/202.439/2019].


\begin{thebibliography}{0}
\bibitem{Angulo1:2020} 
Angulo, C. 
``A new cohomology theory for strict Lie 2-algebras.'' 
\textit{Commun. Contemp. Math.} (forthcoming)

\bibitem{Angulo2:2020} 
Angulo, C. 
``Towards a new cohomology theory for strict Lie 2-groups.'' 
Preprint, submitted October 15, 2020.  

\bibitem{Baez_Lauda:2004} 
Baez, J. C., and  A. Lauda. 
``Higher-dimensional algebra 5: 2-groups.'' 
\textit{Theory Appl. Categ.} 12 (2004): 423--491.

\bibitem{Baez_Crans:2004} 
Baez, J. C., and A. S. Crans. 
``Higher-dimensional algebra VI: Lie 2-algebras.'' 
\textit{Theory Appl. Categ.} 12 (2004): 492--538.

\bibitem{Bursztyn_Cabrera_delHoyo:2016} 
Bursztyn, H., A. Cabrera and M. del Hoyo. 
``Vector bundles over Lie Groupoids and algebroids.'' 
\textit{Adv. Math.} 290 (2016): 163--207.

\bibitem{Crainic:2003} 
Crainic, M. 
``Differentiable and algebroid cohomology, van Est isomorphisms, and characteristic classes.'' 
\textit{Coment. Math. Helv.} Vol.~78, 4 (2003): 681--721.

\bibitem{Ehresmann:1963} 
Ehresmann, C. 
``Cat\'egories structur\'ees.'' 
\textit{Ann. Ec. Norm. Sup.} (3) 80 (1963): 349--426.

\bibitem{Loday:1982} 
Loday, J.-L. 
``Spaces with finitely many non-trivial homotopy groups.'' 
\textit{J. Pure Appl. Algebra} 24 (1982): 179--202.

\bibitem{Mackenzie:1992}
Mackenzie, K. C. H.
``Double Lie algebroids and second-order geometry, I.''
\textit{Adv. Math.} 94 (1992): 180--239.

\bibitem{Neeb:2002} 
Neeb, K.-H. 
``Central extensions of infinite-dimensional Lie groups.'' 
\textit{Ann. Inst. Fourier, Grenoble.} Vol.~52, 5 (2002): 1365--1442.

\bibitem{Norrie:1987} 
Norrie, K. 
``Crossed modules and analogues of group theorems.''
(PhD diss., King's College, University of London, 1987).

\bibitem{Sheng_Zhu1:2012}
Sheng, Y., and C. Zhu.
``Integration of Lie 2-algebras and their morphisms.'' 
\textit{Lett. Math. Phys.} 102, no. 2 (2012): 223--244.

\bibitem{Sheng_Zhu2:2012} 
Sheng, Y., and C. Zhu. 
``Integration of semidirect product Lie 2-algebras.'' 
\textit{Int. J. Geom. Methods Mod. Phys.} Vol.~9, 5 (2012): 1250043.

\bibitem{Stefanini:2008} 
Stefanini, L. 
``On morphic actions and integrability of LA-Groupoids.''
(PhD diss., Universit\"at Z\"urich, 2008), 
http://user.math.uzh.ch/cattaneo/stefanini.pdf  

\bibitem{vanEst:1953}
Van Est, W. T.
``Group cohomology and Lie algebra cohomology in Lie groups. I, II.''
\textit{Nederl. Akad. Wetensch. Proc.} Ser. A. 56 =
\textit{Indagationes Math.} 15 (1953): 484--492, 493--504.

\bibitem{vanEst:1955} 
Van Est, W. T. 
``Une application d'une m\'ethode de Cartan-Leray.'' 
\textit{Proc. Kon. Ned. Akad.} 58 (1955): 542--544.

\bibitem{vanEst_Korthagen:1964} 
Van Est, W. T., and J. Th. Korthagen.
``Non-enlargible Lie algebras.'' 
\textit{Indag. Math.} 26 (1964): 15--31.  

\bibitem{Weibel:1994}
Weibel, C.
\textit{An Introduction to Homological Algebra} (Cambridge Studies in Advanced Mathematics).
Cambridge: Cambridge University Press, 1994.
\end{thebibliography}
\end{document}